\documentclass[a4paper]{amsart}
\usepackage{pstricks}
\usepackage[T1]{fontenc}
\usepackage[utf8]{inputenc}
\usepackage{stmaryrd}
\usepackage{mathrsfs}
\usepackage{amsmath}
\usepackage{amsthm}
\usepackage{amsfonts}
\usepackage{amssymb}
\usepackage{lmodern}
\usepackage{hyperref}

\oddsidemargin=18pt
\textwidth=15,5cm
\evensidemargin=18pt

\newtheorem{theorem}{Theorem}
\theoremstyle{plain}

\newtheorem{lemma}[theorem]{Lemma}

\newtheorem{proposition}[theorem]{Proposition}
\newtheorem{remark}{Remark}

\newtheorem*{nontheorem}{Theorem}
\newtheorem*{Ack}{Acknowledgements}

\newcommand{\Ex}{\mathbb{N}}
\newcommand\rightalign[1]{\noalign{\vbox{\hfill $\displaystyle #1$}}}

\newcommand{\Pro}{\mathbf{P}}
\renewcommand{\Pr}{\mathbb P}
\newcommand{\Es}{\mathbb{E}}
\newcommand{\Esp}{\mathbf{E}}
\newcommand{\Espp}{\Es_\infty}
\renewcommand{\phi}{\varphi}
\renewcommand{\epsilon}{\varepsilon}

\newcommand{\EsAr}{\mathbb{T}}

\newcommand{\bra}[2]{\llbracket #1,#2 \rrbracket}

\newcommand{\trac}{\theta(\xrac)}
\newcommand{\xrac}{s_{\emptyset,n}}
\newcommand{\hrac}{h_{\emptyset,n}}

\newcommand{\mas}{\mathbf{m}}
\newcommand{\lf}{\mathsf{x}}

\newcommand{\cb}{{\mathsf B}}

\newcommand{\ce}{{\mathscr E}}

\newcommand{\cf}{{\mathcal F}}
\newcommand{\cg}{{\mathcal G}}

\newcommand{\cn}{{\mathcal N}}

\newcommand{\ct}{{\mathscr T}}

\newcommand{\dt}{\mathsf{T}}

\newcommand{\R}{{\mathbb R}}

\newcommand{\ind}[1]{\mathbf{1}_{#1}}

\newcommand{\expp}[1]{\mathop {\mathrm{e}^{ #1}}}

\begin{document}
 
\title[Fluctuations for the number of records]{Fluctuations for the number of
records on subtrees of the Continuum Random Tree}
\date{\today}
\author{Patrick Hoscheit}
\address{
Patrick Hoscheit, 
Universit\'e Paris-Est, CERMICS,  6-8
av. Blaise Pascal,
  Champs-sur-Marne, 77455 Marne La Vall\'ee, France.
\url{http://cermics.enpc.fr/~hoscheip/home.html}
}
\email{hoscheip@cermics.enpc.fr}

\thanks{This work is partially supported by the French ``Agence Nationale de
  la Recherche'', ANR-08-BLAN-0190.}

\keywords{}

\subjclass[2010]{60F05, 60G55, 60J80}

\begin{abstract}
We study the asymptotic behavior af the number of cuts $X(T_n)$ needed to
isolate the root in a rooted binary random tree $T_n$ with $n$ leaves. We focus
on the case of subtrees of the Continuum Random Tree generated by uniform
sampling of leaves. We elaborate on a recent result by Abraham and Delmas, who
showed that $X(T_n)/\sqrt{2n}$ converges a.s. towards a Rayleigh-distributed
random variable $\Theta$, which gives a continuous analog to an earlier result
by Janson on conditioned, finite-variance Galton-Watson trees. We prove a
convergence in distribution of $n^{-1/4}(X(T_n)-\sqrt{2n}\Theta)$ towards a
random mixture of Gaussian variables. The proofs use martingale limit theory for
random processes defined on the CRT, related to the theory of records of Poisson
point processes.
\end{abstract}

\maketitle

\section*{Introduction}
The Continuum Random Tree (CRT) is a random metric measure space, introduced by
Aldous (\cite{Aldous1991a, Aldous1993a}) as a scaling limit of various
discrete random tree models. In particular, if we consider $\mu$, a critical
probability measure on $\Ex$, with variance $0<\sigma^2<\infty$ and if we
consider a random Galton-Watson tree $\mathcal{T}_n$ with offspring distribution
$\mu$, conditioned on having $n$ vertices, then we have the following
convergence in
distribution:
\begin{equation} \label{ConvCRT}
\lim_{n\to \infty} \frac{\sigma}{\sqrt n} \mathcal{T}_n = \ct,
\end{equation}
in the sense of Gromov-Hausdorff convergence of compact metric spaces (see for
instance \cite{Duquesne2005a} for more information about the Gromov-Hausdorff
topology), where $\ct$ is a CRT. The family of conditioned Galton-Watson trees
turns out to be quite large, since it contains for instance uniform rooted
planar binary trees (take $\mu(0)=\mu(2)=1/2$) or uniform rooted labelled trees
(Cayley trees, take $\mu(k)=e^1/k!,\ k\ge 0$). There is a combinatorial
characterization of conditioned Galton-Watson trees: they correspond to
the class of so-called \emph{simply generated trees} (see \cite{Janson2012} for
a detailed survey).

In their 1970 paper (\cite{Meir2009}), Meir and Moon considered the problem of
isolating the root through uniform cuts in random Cayley trees. The problem is
as follows: start with a rooted discrete tree $\mathcal{T}_n$, having exactly
$n$ edges (in our context, \emph{rooted} means that, among the $n+1$ vertices of
$\mathcal{T}_n$, one has been distinguished). At each step, remove an edge,
selected uniformly among all edges, then discard the connected component not
containing the root. This procedure is iterated on the remaining tree until the
root is the only remaining vertex. The number $X(\mathcal{T}_n)$ of cuts that is
needed to isolate the root is random, with values in $\llbracket 1,n
\rrbracket$. Meir and Moon showed that when $\mathcal{T}_n$ is a uniform Cayley
tree with $n$ edges, 
\[ \Es[X(\mathcal{T}_n)]\sim \sqrt{\pi n/2}\quad \text{and}\quad
\mathrm{Var}(X(\mathcal{T}_n)) \sim (2-1/\pi)n. \]
 Later, the limiting distribution was found to be the Rayleigh distribution (the
distribution on $[0,\infty)$ with density $x\exp(-x^2/2)dx$) by Panholzer for (a
subset of) the class of simply generated trees (\cite{Panholzer2006}) and, using
a different proof, by Janson for the class of critical, finite-variance,
conditioned Galton-Watson trees (\cite{Janson2006}). 

In \cite{Janson2006}, the distribution of the limiting Rayleigh variable was
obtained using a moment problem, but the question arose whether it had a
connection with the convergence \eqref{ConvCRT} above. Indeed, it is well-known
that the distance from the root to a uniform leaf of the CRT is
Rayleigh-distributed. As a consequence, several approaches were used to describe
a cutting procedure on the CRT that could account for the convergence of
$X(\mathcal{T}_n)/\sqrt{n}$. All these works are relying on the Aldous-Pitman
fragmentation of the CRT, first described in (\cite{Aldous1998}). We will give
below a brief descriptions of this procedure, as it will be central in this
work. Using an extension of the Aldous-Broder algorithm, Addario-Berry, Broutin
and Holmgren described a fragmentation-reconstruction procedure for Cayley trees
and its analog for the CRT. The invariance they prove shows that the limiting
random variable in Janson's result can indeed be realized as the height of a
uniform leaf in a CRT. However, it is not the same CRT as the one arising from
the scaling limit of $\mathcal{T}_n/\sqrt{n}$. Indeed, the random variables
$n^{-1/2}\mathcal{T}_n$ and $n^{-1/2}X(\mathcal{T}_n)$ do \emph{not} converge
jointly to a CRT $\ct$ and the height of a random leaf $H(\ct)$. Bertoin and
Miermont (\cite{Bertoin2012}) describe the so-called \emph{cut-tree}
$\mathrm{cut}(\ct)$ of a given CRT $\ct$ following the genealogy of fragments in
the Aldous-Pitman fragmentation. The limiting variable can then be
described as the height of a uniform leaf in $\mathrm{cut}(\ct)$, which is again
a CRT, thus recovering Rayleigh distribution. \par Following Abraham and Delmas
(\cite{Abraham2011}), we will use a different point of view, based on the
theory of records of Poisson point processes. We will now review some of their
results, in order to set the notations and to describe the framework. 

\subsection*{The Brownian CRT}

In this section, we will recall some basic facts about the Brownian CRT. For
details, see \cite{Aldous1991a,Duquesne2005a,Evans}. We will write $\mathbb{T}$
for the set of (pointed isometry classes of) compact, rooted real trees endowed
with a finite Borel measure. Recall that real trees are metric spaces $(X,d)$
such that 
\begin{itemize}
	\item[(i)] For every $s,t\in X$, there is a unique isometric map
$f_{s,t}$ from $[0,d(s,t)]$ to $X$ such that $f_{s,t}(0)=s$ and
$f_{s,t}(d(s,t))=t$. The image of $f_{s,t}$ is noted $\bra{s}{t}$.
	\item[(ii)] For every $s,t\in X$, if $q$ is a continuous injective map
from $[0,1]$ to $X$ such that $q(0)=s$ and $q(1)=t$,
then $q([0,1])=f_{s,t}([0,d(s,t)])$. 
\end{itemize}
There exists a metric on $\mathbb{T}$ that makes it a Polish metric space, but
we will not attempt to describe it here. For more details, see
\cite{AbrahamGHP,Evans}. 

The Brownian CRT (or Aldous's CRT) is a random element of $\mathbb{T}$, defined
using the so-called \emph{contour process} description: if $f$ is a continuous
nonnegative map $f:[0,\sigma]\to\R_+$, such that $f(0)=f(\sigma)=0$, then the
real tree encoded by $f$ is defined by $\ct_f=[0,\sigma]_{/\sim_f}$,
where $\sim_f$ is the equivalence relation 
\[ 
  x \sim_f y\ \Leftrightarrow\ f(x)=f(y)=\min_{u\in [x\wedge y,x\vee y]}
f(u),\quad x,y\in [0,\sigma].
\]
The metric on $\ct_f$ is defined by 
\[
 d_f(x,y) = f(x)+f(y)-2\min_{u\in [x\wedge y,x\vee y]} f(u),\quad x,y\in
[0,\sigma],
\]
so that $d_f(x,y)=0$ if and only if $x\sim_f y$. Hence, $d_f$ is
definite-positive on $\ct_f$ and defines a true metric. It can be checked (see
\cite{Duquesne2005a}) that $(\ct_f,d_f)$ is indeed a real tree. We define the
\emph{mass-measure} $\mathbf{m}^{\ct_f}(ds)$ on $\ct_f$ as the image of
Lebesgue measure on $[0,\sigma]$ by the canonical projection $[0,\sigma]
\rightarrow \ct_f$. Thus, $\mas^{\ct_f}$ is a finite measure on
$\ct_f$, with total mass $\mas^{\ct_f}(\ct_f)=\sigma$. When the context is
clear, we will usually drop the reference to the tree and write $\mas$ for the
mass-measure $\mas^\ct$.

Now, the Brownian Continuum Random Tree (CRT) corresponds to the real tree
encoded by $f=2B^{\text{ex}}$, twice the normalized Brownian excursion. Since
the length of the normalized Brownian excursion is 1 a.s., the CRT has total
mass 1, \emph{i.e.} the mass measure $\mas$ is a probability measure. The
distribution of the CRT will be noted $\Pr$, or sometimes $\Pr^{(1)}$ if we want
to emphasize the fact that $\mas$ has mass 1. Sometimes, we will consider
\emph{scaled} versions of the CRT. If $r>0$, we consider the scaled Brownian
excursion
\[ 
 B^{\text{ex},r}_t = \sqrt{r} B^{\text{ex}}_{t/r},\ t\in [0,r]
\]
and the associated real tree $\ct_{2B^{\text{ex},r}}$, whose distribution will
be noted $\Pr^{(r)}$. Note that the transformation above corresponds to
rescaling all the distances in a $\Pr^{(1)}$-distributed tree by a factor
$\sqrt{r}$. 

The measure $\mas$ is supported by the set of \emph{leaves} of $\ct$, which are
the points $\mathrm{x}\in \ct$ such that $\ct\setminus \{ \mathrm{x} \}$ is
connected. There is another natural measure $\ell$ defined on the CRT, called
\emph{length measure}, which is $\sigma$-finite and such that $\ell(\llbracket
x,y \rrbracket)=d(x,y)$. Also, the CRT is rooted at one particular vertex
$\emptyset$, which is the equivalence class of $0$, but it can be shown (see
Proposition 4.8 in \cite{Duquesne2005a}) that if $\mathrm{x}$ is chosen
according to $\mas$, then, if $\ct^{\mathrm{x}}$ is the tree $\ct$ re-rooted at
$\mathrm{x}$, $(\ct,\mathrm{x})$ has same distribution as
$(\ct^{\mathrm{x}},\emptyset)$.

When we consider the real tree $\ct$ encoded by $2B$, where $B$ is an
\emph{excursion} of Brownian motion, distributed under the ($\sigma$-finite)
excursion measure $\Ex$, we get that $\ct$ is a compact metric space, with a
length measure $\ell$ and with a finite measure $\mas$. We will write
$\sigma$ for the (random) total mass of $\mas$. Under $\Ex$, $\sigma$ is
distributed as the length of a random excursion of Brownian Motion, that is
\[ \Ex[\sigma \ge t] = \sqrt{\frac{2}{\pi t}}.  \]
The Brownian CRT can be seen as a conditioned version of the tree distributed as
$\Ex[d\ct]$, in the sense that, if $F$ is some nonnegative measurable functional
defined on the tree space $\mathbb{T}$, then
\[ \Ex[F(\ct)] = \int_0^\infty \frac{d\sigma}{\sqrt{2\pi}\ \sigma^{3/2}}
\Es^{(\sigma)}[F(\ct)]. \] 
In the sequel, we will make use of this disintegration of $\Ex$, since some
computations are easier to do under $\Ex$ (see Proposition
\ref{Desintegration}).

\subsection*{The Aldous-Pitman fragmentation}

Given a CRT $\ct$, we consider a Poisson point process \[ \cn(ds,dt) =
\sum_{i\in I} \delta_{(s_i,t_i)}(ds,dt) \]
on $\ct\times \R_+$, with intensity $\ell(ds)\otimes dt$. We will sometimes
refer to $\cn$ as the \emph{fragmentation measure}. If $(s_i,t_i)$ is an atom of
$\cn$, we will say that the point $s_i$ was \emph{marked} at time $t_i$. For
$t\ge 0$, we can consider the connected components of $\ct$ separated by the
atoms of $\cn(\cdot \times [0,t])$. They define a random forest $\mathcal{F}_t$
of subtrees of $\ct$. Aldous and Pitman proved that if we consider the trees
$(\ct_k(t), k\ge 1)$ composing $\mathcal{F}_t$, ranked by decreasing order of
their mass, then the process 
\[
((\mas(\ct_1(t)),\mas(\ct_2(t)),\dots),\ t\ge 0) \]
 is a binary, self-similar fragmentation process, with index 1/2 and erosion
coefficient 0, according to the terminology later framed by Bertoin
(\cite{Bertoin2002}). 

\subsection*{Separation times}

In order to give a continuous analogue to the cutting procedure on discrete
trees described above, we will use the Aldous-Pitman fragmentation on the CRT.
Given a CRT $\ct$ and a fragmentation measure $\cn$, we will define, for any
$s\in \ct$, the \emph{separation time} from the root $\emptyset$ by 
\begin{equation*}
 \theta(s) = \inf\ \{ t\ge 0,\ \cn(\llbracket \emptyset,s \rrbracket \times
[0,t]) \ge 1 \},
\end{equation*}
with the convention $\inf \emptyset = +\infty$. This separation process will be
our main object of study. Note that, under the
definition above, conditionally on $\ct$, $\theta(\emptyset)=\infty$
a.s., and $\theta(s)<\infty$ a.s. for all $s\neq \emptyset$, since $\theta(s)$
is then exponentially distributed with parameter $\ell(\llbracket \emptyset,s
\rrbracket)=d(\emptyset,s)$. Note also that $\theta(s)\to\infty$ when $s\to
\emptyset$, which justifies our convention for $\theta(\emptyset)$.

It is also possible to define the separation process started from any $q\ge 0$,
rather than from infinity. In order to do this, we consider only the marks whose
$t$-component is smaller than $q$:
\begin{equation}
 \theta(s) = \inf\ \{ 0\le t \le q,\ \cn(\llbracket \emptyset,s \rrbracket
\times
[0,t]) \ge 1 \},
\end{equation}
with the convention $\inf \emptyset = q$. Note that, under this definition, we
always have $\theta(\emptyset)=q$, as well as $\lim \theta(s)_{s\to \emptyset} =
q$ a.s. In the case where $q=\infty$, we recover the same distribution as the
separation process defined earlier. The (quenched) distribution of the
separation process started at $q\in [0,\infty]$ on a given CRT $\ct$ will be
noted $\Pr^\ct_q$.

We will also note $\Pr_q^{(r)}$ the (annealed) distribution of the
process $(\theta(s),s\in \ct)$ started at $q\in [0,\infty]$, when $\ct$ is
distributed as a Brownian CRT with mass $r>0$:
\[ 
 \Pr_q^{(r)} = \int_{\EsAr}\Pr^{(r)}(d\ct)\ \Pr_q^{\ct} .
\]
Again, to keep things simple, we will usually work under
$\Pr_\infty=\Pr_\infty^{(1)}$. The jump points of the separation process
correspond to points $s$ marked by the fragmentation measure at a time $t$ where
they belong to the connected component of the root. This implies that they
accumulate in the neighbourhood of the root if $q=\infty$. If $T$ is a subtree
of $\ct$, we note $X(T)$ the number of jumps of the separation process on $T$.
This number can be finite or infinite, according to whether $T$ contains the
root or not, in the case $q=\infty$. 

\subsection*{Linear record process}

One can consider the record process on the real line (\emph{i.e.} when
$\ct=\R_+$), defined using a
Poisson point measure with intensity $ds\otimes dt$. We get, for any $q\in
(0,\infty]$, a random process $(\theta(s),\ s\ge 1)$ such that $\theta(0)=q$,
$\Pr_q^{\R_+}$-a.s. The distribution of this process will be noted
$\Pro_q=\Pr_q^{\R_+}$. We can consider
the jump process 
\[ X_t = \sum_{s\in [0,t]} \ind{\{\theta(s-) > \theta(s)\}}, \]
counting the number of jumps of $\theta$ on $[0,t]$. It should be noted that if
$q=\infty$, then $\theta$ jumps infinitely often in the neighbourhood of the
root, so that a.s. $X_t=\infty$ for any $t>0$. It is easy to check
that, for any bounded, measurable functional $g$ defined on $[0,q]$, we have 
\begin{equation*}
 \Esp_q\left[g(\theta(s))\right] = \expp{-qs}g(q) + \int_0^q g(x)s\expp{-sx}\
dx.
\end{equation*}
In particular, 
\begin{equation}\label{EspTheta}
 \Esp_q\left[\theta(s)\right] = \frac{1-\expp{-qs}}{s}.
\end{equation}
When $q<\infty$, if $t\ge 0$, and conditionally on $\theta(t) = q'$, the next
jump of $\theta$ can be seen to be equal to $\inf\  \{ s\ge t,\
\cn([0,q'],[t,s])
\ge 1 \}$, which
is exponentially distributed, with parameter $q'$. Thus, $X$ is the counting
process of a point measure on $\R_+$ with intensity $\theta(s) ds$. Elementary
properties of counting processes of point measures (see \cite{Abraham2011} for
more details) then show that, for any $q\in (0,\infty)$, the processes 
\begin{gather}\label{MartRec1} \left(N_t = X_t - \int_0^t \theta(s)\ ds,\ t\ge
0\right)  \\
\label{MartRec2} \left(N_t^2-\int_0^t \theta(s)\ ds,\ t\ge
0\right) \\
\label{MartRec3} \left(N_t^4-3 \left(\int_0^t \theta(s)\ ds \right)^2
-\int_0^t\theta(s)\ ds,\ t\ge 0\right)
\end{gather}
are $\Pro_q$-martingales in the natural filtration of $\theta$.

\subsection*{Number of records on subtrees}

Given a CRT $\ct$, let $(\mathrm{x}_n,\ n\ge 1)$ be an iid sequence of leaves of
$\ct$, sampled according to $\mas$. If $n\ge 1$, we consider $\dt_n$, the
subtree spanned by the leaves $(\emptyset,\mathrm{x}_1,\dots,\mathrm{x}_n)$. The
tree $\dt_n$ is a random rooted binary tree with edge-lengths, whose
distribution is explicitly known (see \cite{Aldous1993a}). Its length
$L_n=\ell(\dt_n)$ is known to be distributed according to the
$\mathrm{Chi}(2n)$-distribution, that is 
\begin{equation}\label{Chi2n}
 \Pr(L_n\in dx) = \frac{2^{1-n}}{(n-1)!} x^{2n-1} \exp(-x^2/2)\ind{\{x>0\}}. 
\end{equation}
Note that the case $n=1$ gives a Rayleigh distribution, as was mentioned
earlier. It is proven in \cite{Abraham2011} that, a.s.:
\begin{equation}\label{AsymLong}
 \lim_{n\to\infty} \frac{L_n}{\sqrt{2n}} = 1.
\end{equation}
The tree $\dt_n$ has exactly $2n-1$ edges. The edge adjacent to the root will be
noted $\bra{\emptyset}{\xrac}$, where $\xrac$ is the first branching point in
$\dt_n$; the height of $\xrac$ is noted
$\hrac=\ell(\bra{\emptyset}{\xrac})$. Recall from Proposition 5.3 in
\cite{Abraham2011} that $\sqrt{n}\hrac$ converges in distribution to a
nondegenerate random variable, and that we have the following moment
computation, for $\alpha >-1$:
\begin{equation}\label{eq:MomHn}
 \Es\left[\hrac^{\alpha}\right] = \frac{\Gamma(\alpha+1)}{2^{\alpha/2}}
\frac{\Gamma(n-1/2)}{\Gamma(n+\alpha/2-1/2)} \sim_{n\to\infty}
\Gamma(\alpha+1)2^{-\alpha/2} n^{-\alpha/2}.
\end{equation}
We will also use the notation $\dt_n^*=(\dt_n \setminus
\bra{\emptyset}{\xrac})\cup \{\xrac\}$ for the subtree above the lowest
branching point in $\dt_n$. When a new leaf $\lf_n$ is sampled, it gets attached
to the tree $\dt_{n-1}$ through a new edge, that connects to $\dt_{n-1}$ at the
vertex $s_n\in \dt_{n-1}$. We write 
\[ 
\mathrm{B}_n = (\dt_n\setminus \dt_{n-1})\cup \{s_n\} = \bra{s_n}{\lf_n}.
\]
The quantity $X_n^*$ is the continuum counterpart of the edge-cutting number
$X(T_n)$ that can be found in the literature. Indeed, as soon as a jump appears
on the first edge $\llbracket \emptyset,\xrac \rrbracket$, all subsequent jumps
will be on this edge, even closer to the root. Thus, $X_n^*$ can be seen as the
number of cuts before the first cut on $\llbracket \emptyset,\xrac \rrbracket$
was made. In some sense, the first mark appearing on $\llbracket \emptyset,\xrac
\rrbracket$ is analog to the last cut needed to isolate the root in the discrete
case.

The following theorem is the analog of the convergence (in distribution) that
can be found in \cite{Janson2006} $X(\mathcal{T}_n)/\sqrt{n} \rightarrow
\mathcal{R}$, where $\mathcal{R}$ is Rayleigh-distributed. We will write
$\Theta$ for the mean separation time $\int_\ct \theta(ds) \mas(ds)$.
\begin{nontheorem}[\cite{Abraham2011}]
 We have $\Pr_\infty$-a.s:
\begin{equation}\label{LGN}
 \lim_{n\to\infty} \frac{X_n^*}{\sqrt{2n}} = \Theta.
\end{equation}
Furthermore, under $\Pr_\infty$, $\Theta$ has Rayleigh distribution.
\end{nontheorem}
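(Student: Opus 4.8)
The plan is to prove the two assertions separately: the law of large numbers \eqref{LGN} by comparing $X_n^*$ with its predictable compensator $\int_{\dt_n^*}\theta\,d\ell$, and the law of $\Theta$ by the method of moments (or, alternatively, via the Aldous--Pitman fragmentation). Work in the filtration $(\cg_n)$ generated by $\ct$, the leaves $\lf_1,\dots,\lf_n$ and the fragmentation marks on $\dt_n$, so that $X_n^*$ is $\cg_n$-measurable. For a finite subtree $T$ whose lowest vertex has positive height, exploring the marks on $T$ branch by branch and running the computations behind \eqref{MartRec1}--\eqref{MartRec2} shows that $X(T)-\int_T\theta\,d\ell$ is centred with conditional variance $\Es[\int_T\theta\,d\ell]<\infty$ (the tree analogue of the linear record martingales). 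One cannot take $T=\dt_n$, since the jumps accumulate at the root and $\int_{\dt_n}\theta\,d\ell=+\infty$; but $\dt_n^*\subseteq\dt_{n+1}^*$, and applying the above to the increments makes $Y_n:=X_n^*-\int_{\dt_n^*}\theta\,d\ell$ a $\cg_n$-martingale with $\Es[(Y_n-Y_{n-1})^2]=O(n^{-1/2})$; since $\sum_n n^{-3/2}<\infty$, the martingale strong law gives $Y_n/\sqrt{2n}\to0$ a.s. (this fluctuation is in fact of order $n^{1/4}$, as analysed in the main results). So \eqref{LGN} reduces to
\[
\frac{1}{\sqrt{2n}}\int_{\dt_n^*}\theta\,d\ell\ \xrightarrow[n\to\infty]{}\ \Theta=\int_\ct\theta\,d\mas\qquad\Pr_\infty\text{-a.s.}
\]

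Since $\theta$ is a fixed, $\mas$-a.e. continuous and $\mas$-integrable function of $(\ct,\cn)$ (by \eqref{EspTheta} and \eqref{Chi2n}), this is essentially the weak convergence, $\Pr$-a.s., of the rescaled length measures $\nu_n:=\tfrac1{\sqrt{2n}}\,\ell|_{\dt_n}$ towards $\mas$. I would obtain $\nu_n\to\mas$ from Aldous's line-breaking construction: $L_k=\sqrt{2\Gamma_k}$ with $\Gamma_k=E_1+\dots+E_k$, $E_j$ i.i.d.\ standard exponential, the $k$-th edge $\mathrm{B}_k$ has length $\ell(\mathrm{B}_k)=L_k-L_{k-1}\sim E_k/\sqrt{2k}$ and is attached at a point $s_k$ uniform on $\dt_{k-1}$ for $\ell$, with $s_k\to\lf_k$. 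Hence for bounded continuous $f$, $\int f\,d\nu_n=\tfrac1{\sqrt{2n}}\sum_{k\le n}\int_{\mathrm{B}_k}f\,d\ell$ equals, up to a.s.\ negligible terms, $\tfrac1{2\sqrt n}\sum_{k\le n}f(\lf_k)E_k/\sqrt k$, and a weighted strong law of large numbers — using that $\lf_k$ is $\mas$-distributed, so $\Es[f(\lf_k)]=\int f\,d\mas$ — identifies the limit as $\int f\,d\mas$. With $L_n/\sqrt{2n}\to1$ from \eqref{AsymLong} and a truncation of $\theta$ near the root (where it is large), this yields the displayed convergence and hence \eqref{LGN}.

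For the law of $\Theta$, write $\theta(s)=\int_0^\infty\ind{s\in\ct_\emptyset(t)}\,dt$, where $\ct_\emptyset(t)$ is the connected component of the root at time $t$, so that $\Theta=\int_0^\infty\mas(\ct_\emptyset(t))\,dt$. For i.i.d.\ $\mas$-leaves $\lf_1,\dots,\lf_k$ one has $\Es_\infty[\Theta^k]=\Es[\prod_i\theta(\lf_i)]$, and conditionally on the reduced tree $\dt_k$ — whose shape is uniform and whose edge lengths, given $L_k\sim\mathrm{Chi}(2k)$ by \eqref{Chi2n}, are uniform on the simplex — an elementary Poisson computation gives
\[
\Es\Bigl[\prod_{i=1}^k\theta(\lf_i)\ \Big|\ \dt_k\Bigr]=\int_{[0,\infty)^k}\exp\Bigl(-\sum_{e\in\dt_k}\ell(e)\,\max\{t_i:e\subset\bra{\emptyset}{\lf_i}\}\Bigr)\,dt_1\cdots dt_k .
\]
One then checks, by induction on $k$, that averaging this over the law of $\dt_k$ gives $2^{k/2}\Gamma(1+k/2)$, the moments of the Rayleigh law (which is moment-determined); for $k=1$ this is $\Es_\infty[\Theta]=\Es[1/L_1]=\sqrt{\pi/2}$. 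Alternatively, since the intensity $\ell(ds)\otimes dt$ of $\cn$ does not refer to the root, re-rooting invariance of the CRT shows that $(\mas(\ct_\emptyset(t)),t\ge0)$ has the law of the mass of the fragment containing a $\mas$-tagged point, and $\Theta$ is the time-integral of that tagged fragment, whose law is classically Rayleigh.

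The routine parts are the martingale bookkeeping for $Y_n$ and the $\theta$-truncation. I expect the two genuine obstacles to be, first, the convergence $\nu_n\to\mas$: making the line-breaking argument rigorous requires controlling the small-but-cumulative discrepancy between $\int_{\mathrm{B}_n}\theta\,d\ell$ and $\theta(s_n)\ell(\mathrm{B}_n)$ and correctly handling the (conditional) independence structure of the sticks, leaves and attachment points; and, second, the exact evaluation of $\Es_\infty[\Theta^k]$ — collapsing the tree integral above, averaged over $\dt_k$, to $2^{k/2}\Gamma(1+k/2)$ (or, in the alternative route, pinning down the Brownian-fragmentation tagged fragment) — which is the combinatorial heart of the argument and where the main difficulty lies.
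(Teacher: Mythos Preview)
This theorem is not proved in the present paper; it is quoted from Abraham--Delmas \cite{Abraham2011}. Enough of their argument surfaces here, however (the discussion around \eqref{InegVn} and the opening of Section~2), to compare your plan against it.

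Your decomposition $X_n^*=Y_n+\int_{\dt_n^*}\theta\,d\ell$ is the right one and coincides with the paper's $M_n^*$. But your claim that $Y_n$ is a $(\cg_n)$-martingale is wrong, and the paper says so explicitly at the start of Section~2: when the new attachment point $s_{n+1}$ lands on the root edge $\bra{\emptyset}{\xrac}$, the set $\dt_{n+1}^*\setminus\dt_n^*$ contains the segment $\bra{s_{n+1}}{\xrac}$, on which the marks are already measurable with respect to your $\cg_n$; the resulting contribution to $Y_{n+1}-Y_n$ has nonzero conditional mean. The fix (also in the paper) is to replace $M_n^*$ by $\widehat M_n$, summing over $\dt_n\setminus\dt_1$, and to treat the residual branch $\bra{\xrac}{\lf_1}$ separately. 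With that correction your variance bound and the martingale SLLN go through.

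Where your route genuinely departs from Abraham--Delmas is in the step $\tfrac{1}{\sqrt{2n}}\int_{\dt_n^*}\theta\,d\ell\to\Theta$. As the paper reports, their argument runs through the \emph{closed} $L^2$ martingale $M_n=\Espp[\Theta\,|\,\cf_n]$, which converges a.s.\ to $\Theta$ for free, and then controls the gap between $M_n$ and $\tfrac{1}{L_n}\int_{\dt_n^*}\theta\,d\ell$ via the two-sided bound \eqref{InegVn}. Your plan instead argues directly that $\nu_n\to\mas$ weakly and applies this to the integrand $\theta$. That is a legitimate alternative, but you should not call the truncation ``routine'': as the paper remarks in the proof of Lemma~\ref{Theta3/2}, $\theta$ is neither bounded nor continuous on $\ct$, so upgrading weak convergence of $\nu_n$ to convergence of $\int\theta\,d\nu_n$ needs a genuine uniform-integrability input---essentially a bound on $\int_{\dt_n^*}\theta^\alpha\,d\ell/L_n$ for some $\alpha>1$, which is precisely Lemma~\ref{Theta3/2}. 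The closed-martingale route sidesteps this by packaging the limit $\Theta$ into the martingale from the outset.

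Both of your proposed routes to the Rayleigh law of $\Theta$ are reasonable; the tagged-fragment formulation $\Theta=\int_0^\infty\mas(\ct_\emptyset(t))\,dt$ connects directly to the distribution identified in the Remark following Lemma~\ref{Desintegration}.
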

 Note that $\dt_n^*$ has $2n-2$ edges, so that the rescaling is $\sqrt{2n}$. In
comparison, Janson considers random trees with $n$ edges, which explains the
difference between the two results. It should be noted that Abraham and Delmas
show a slightly more general result, since they consider scaled versions of the
CRT, proving the result under all the measures $\Pr^{(r)}_\infty,\ r>0$. While
our main result, Theorem \ref{The:TCLPrincipal} below is still true in these
cases, we restrict ourselves to the case of Aldous's tree ($r=1$) for
convenience.

The purpose of this work is to investigate the fluctuations of $X_n^*/\sqrt{2n}$
around its limit $\Theta$. It is shown in Theorem \ref{The:TCLPrincipal}, which
is the main result of this work, that these fluctuations are typically of the
order $n^{1/4}$. 

\begin{theorem}\label{The:TCLPrincipal}
  Under $\Pr_\infty$, we have the following convergence in distribution:
\begin{equation}\label{Conv:TCL2}
 \lim_{n\to\infty} n^{1/4} \left( \frac{X_n^*}{\sqrt{2n}} - \Theta \right) = Z,
\end{equation}
where $Z$ is a random variable which is, conditionally on $\Theta$, distributed
according to 
\begin{equation}\label{LaplaceZ} \Espp\left[ \left. e^{i t Z} \right| \Theta
\right] = e^{-t^2 \Theta/\sqrt{2}}.
\end{equation}
\end{theorem}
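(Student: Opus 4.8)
The plan is to grow the subtrees one leaf at a time, to reduce the number of new records created at each step to a \emph{linear} record process (for which the exact martingales \eqref{MartRec1}--\eqref{MartRec3} are available), and then to run a martingale central limit theorem with stable convergence, the conditioning on $\Theta$ arising from the almost sure convergence of a rescaled quadratic variation.

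\emph{Sequential decomposition.} Recall that $\dt_n$ is obtained from $\dt_{n-1}$ by grafting a new edge $\mathrm B_n=\bra{s_n}{\lf_n}$ at a point $s_n$ which, conditionally on $\dt_{n-1}$, is distributed according to the length measure on $\dt_{n-1}$ normalised to a probability, independently of $\ell_n:=\ell(\mathrm B_n)$, whose law is that of the first gap after $L_{n-1}$ of a Poisson measure of intensity $t\,dt$ on $\R_+$. With probability $h_{\emptyset,n-1}/L_{n-1}$ the point $s_n$ falls on the edge adjacent to the root of $\dt_{n-1}$, in which case the first branching point drops and $\dt_n^*$ absorbs a sub-segment of the former root edge; since $h_{\emptyset,n-1}/L_{n-1}=O(1/n)$ by \eqref{eq:MomHn} and \eqref{AsymLong}, this happens for infinitely many $n$, but each such event brings in only $O(1)$ new records (the record count of a bounded-length portion of a line), so that in aggregate these contribute $O(\log n)$, which is negligible at scale $n^{1/4}$. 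Thus $X_n^*=\sum_{k=2}^n\Delta_k+O(\log n)$, where $\Delta_k:=X(\mathrm B_k)$ is the number of records on the $k$-th new edge.

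\emph{Each increment is a linear record process.} Work with the filtration $(\cg_n)$ obtained by revealing the CRT, the leaves one at a time, and the fragmentation marks only on the current subtree. Conditionally on $\cg_{k-1}$ and on $(s_k,\ell_k)$, the marks on $\mathrm B_k$ form a fresh Poisson measure of intensity $\ell\otimes dt$ (independence of a Poisson measure on disjoint regions, together with $\lf_k\perp\cn$), and the separation process along $\mathrm B_k$, parametrised by the distance $u$ to $s_k$, equals $\min\bigl(\theta(s_k),\tau(u)\bigr)$ with $\tau$ the record process driven by these marks; capping at the $\cg_{k-1}$-measurable value $\theta(s_k)$ is the same as starting the record process from $q=\theta(s_k)$. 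Hence $\Delta_k$ has the law of $X_{\ell_k}$ under $\Pro_{\theta(s_k)}$, and by \eqref{EspTheta} and \eqref{MartRec1}--\eqref{MartRec3} its conditional mean and conditional variance both equal $\theta(s_k)\ell_k$ up to an $O(\theta(s_k)^2\ell_k^2)$ correction, while its conditional fourth moment is also $O(\theta(s_k)\ell_k)$; since $\sum_k\theta(s_k)\ell_k=O(\sqrt n)$, this fourth-moment bound will provide the Lindeberg condition for the martingale CLT below.

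\emph{Martingale decomposition and limit.} Let $X_n^*=A_n+M_n$ be the Doob decomposition in $(\cg_n)$, with $A_n$ predictable and $M_n$ a martingale (the compensator is finite: although $\theta(s_n)$ has infinite conditional mean, $\Delta_n\le 1+\log^+\!\bigl(\theta(s_n)\ell_n\bigr)$ is integrable). Using the previous step and the law of $(s_k,\ell_k)$, the increment of $A_n$ is controlled by the current length-average of $\theta$ over the subtree, $A_n-A_{n-1}\approx L_{n-1}^{-2}\int_{\dt_{n-1}^*}\theta\,d\ell$; combining this with \eqref{AsymLong} (so $L_n\sim\sqrt{2n}$) and with the fact that $\int_{\dt_n^*}\theta\,d\ell$ and $X_n^*$ differ only by a martingale, one finds that $Y_n:=X_n^*-\sqrt{2n}\,\Theta$ obeys a linear stochastic recursion in which the current fluctuation is fed back into the drift through a term of order $1/n$. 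Solving this recursion, $n^{-1/4}Y_n$ is expressed as a martingale transform whose predictable bracket, divided by $\sqrt n$, converges almost surely to a deterministic constant times $\Theta=\int_\ct\theta\,d\mas$ --- precisely the quantity whose appearance already governs the law of large numbers \eqref{LGN}. A martingale central limit theorem applied with stable convergence (so that the limit is mixed Gaussian with mixing variable $\Theta$, which is measurable with respect to $\bigvee_n\cg_n$ because $\Theta=\lim_n\frac1n\sum_{k\le n}\theta(\lf_k)$) then gives that $n^{1/4}\bigl(X_n^*/\sqrt{2n}-\Theta\bigr)$ converges in distribution to a variable that, conditionally on $\Theta$, is centred Gaussian with variance proportional to $\Theta$; computing the constant yields the Fourier transform \eqref{LaplaceZ}.

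\emph{Main obstacle.} The crux is the third step: $X_n^*$ is not a martingale plus a deterministic compensator, because the drift created at step $n$ is itself driven by the current value of $X_{n-1}^*$ (through the length-average of the separation time over the growing subtree). One must therefore analyse the resulting self-reinforcing recursion carefully, both to identify the correct asymptotic variance of $M_n$ and to control the propagation through it of the lower-order terms --- the $O(\log n)$ discrepancy between $\dt_n^*$ and $\bigcup_k\mathrm B_k$, the $O(\theta^2\ell^2)$ corrections above, the $O(1)$ fluctuations of $L_n-\sqrt{2n}$, and the records accumulating near the (moving) first branching point --- so that none of them survives the $n^{1/4}$ scaling. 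Upgrading the convergence of the rescaled bracket from ``in probability'' to ``almost sure'' is what makes the mixed-Gaussian description \eqref{LaplaceZ}, with its conditioning on $\Theta$, available.
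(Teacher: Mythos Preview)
Your first two steps --- decomposing along the successive branches $\mathrm B_k$, identifying the process on each branch as a linear record process started from $\theta(s_k)$, and verifying the Lindeberg condition through \eqref{MartRec3} --- are exactly what the paper does. The convergence of the conditional variance to $\sqrt{2}\,\Theta$ is also the right target.

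The gap is in your third step. The compensator of $X_n^*$ in the filtration you describe is \emph{not} driven by $X_{n-1}^*$; it is driven by $\int_{\dt_{n-1}}\theta(s)\,\ell(ds)$, which is a different random variable (both share the limit $\Theta$ after rescaling, but that is the endpoint, not the mechanism). There is therefore no ``self-reinforcing recursion'' to solve. What actually happens is much cleaner: once you subtract $\int_{\dt_n^*}\theta(s)\,\ell(ds)$ from $X_n^*$, the resulting object $\widehat M_n$ is (up to the root-edge correction you already flagged) a genuine martingale in $(\cg_n)$, because the compensator of the record count on $\mathrm B_k$ is precisely $\int_{\mathrm B_k}\theta(s)\,\ell(ds)$ by \eqref{MartRec1}. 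The paper's decomposition is thus
\[
n^{1/4}\Bigl(\tfrac{X_n^*}{\sqrt{2n}}-\Theta\Bigr)
=\frac{\widehat M_n}{\sqrt{2}\,n^{1/4}}
+n^{1/4}\Bigl(\frac{1}{\sqrt{2n}}\int_{\dt_n^*}\theta(s)\,\ell(ds)-\Theta\Bigr)
+o_{\Pr}(1),
\]
and the martingale CLT is applied directly to the first term.

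The real work, which your sketch does not address, is the second term: one must prove that $\frac{1}{\sqrt{2n}}\int_{\dt_n^*}\theta(s)\,\ell(ds)$ approaches $\Theta=\int_\ct\theta\,d\mas$ at rate $o(n^{-1/4})$. This is not a consequence of the a.s.\ convergence \eqref{LGN}, nor of any recursion; it is a statement about how fast normalised length measure on $\dt_n$ approximates mass measure on $\ct$ when integrated against the (unbounded, discontinuous) function $\theta$. The paper handles this by conditioning on $\cf_n$, decomposing $\ct\setminus\dt_n$ into its connected components via an explicit disintegration formula under $\Pr^{(1)}_\infty$, and then bounding $\Espp[\Theta\mid\cf_n]-\frac{1}{L_n}\int_{\dt_n^*}\theta\,d\ell$ and $\Espp[\Theta\mid\cf_n]-\Theta$ separately, the latter via a careful second-moment estimate on the increments of the closed martingale $\Espp[\Theta\mid\cf_n]$. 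This analysis occupies all of Section~1 and is where the main technical effort lies; your proposal needs a plan for it.
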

In other words, $Z$ is distributed as $2^{1/4}\sqrt{\Theta}G$, where $G$ is an
independent standard normal random variable. As $\Theta$ is Rayleigh-distributed
under $\Espp$, the Laplace transform \eqref{LaplaceZ} can be
explicitly computed, but does not correspond to any known distribution. 

The proof of Theorem \ref{The:TCLPrincipal} will be carried out in two steps:
we write 
\begin{equation}\label{Strategie}
\left( \frac{X_n^*}{\sqrt{2n}} - \Theta \right) = \frac{1}{\sqrt{2n}} \left(
X_n^*- \int_{\dt_n^*} \theta(s) \ell(ds) \right) +\left( \frac{1}{\sqrt{2n}}
\int_{\dt_n^*} \theta(s) \ell(ds) -\Theta \right).
\end{equation}
In Section 1, we will show that, when averaging over $\ct$, the variance arising
from the random choice of the leaves $(\mathrm{x}_n,\ n\ge 1)$ does not bring
any significant contribution to \eqref{Conv:TCL2}. We prove this by
decomposing $\ct$ conditionally on its subtree $\dt_n$ and by proving a general
disintegration formula (Lemma \ref{Desintegration}). Therefore, the second term
in \eqref{Strategie} converges to 0 when suitably renormalized.

In Section 2, we prove Theorem \ref{The:TCLPrincipal} by showing that, when
properly rescaled, the difference $(X_n^*-\int_{\dt_n^*} \theta(s) \mas(ds))$ is
asymptotically normally distributed (Proposition \ref{prop:ConvMartFerm}). This
is a consequence of the classical martingale convergence theorems of
\cite{Hall1980}. 

In the Appendix, we collect several technical lemmas. 

\section{Variance in the weak convergence of length measure to mass measure}

The main result of this section is Proposition \ref{prop:Conv1}. 

\begin{proposition}\label{prop:Conv1}
 As $n\to\infty$, we have the following convergence in probability:
\begin{equation} \label{eq:Conv1}
 \lim_{n\to\infty} n^{1/4}\left( \int_{\dt_n^*} \theta(s)
\frac{\ell(ds)}{\sqrt{2n}}
- \Theta \right) = 0.
\end{equation}
\end{proposition}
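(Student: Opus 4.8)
The plan is to establish \eqref{eq:Conv1} by a second-moment estimate, after conditioning on the subtree $\dt_n$. First, since $L_n$ is $\mathrm{Chi}(2n)$-distributed by \eqref{Chi2n} one has $L_n=\sqrt{2n}\,(1+\mathrm{O}_{\P}(n^{-1/2}))$, and by \eqref{eq:MomHn} the first edge-length $\hrac$ is $\mathrm{O}_{\P}(n^{-1/2})$, so $\ell(\dt_n^*)=L_n-\hrac$ also equals $\sqrt{2n}\,(1+\mathrm{O}_{\P}(n^{-1/2}))$. Writing $\frac1{\sqrt{2n}}\int_{\dt_n^*}\theta\,d\ell-\Theta=\frac{L_n}{\sqrt{2n}}\cdot\frac1{L_n}\int_{\dt_n^*}\theta\,d\ell-\Theta$ and using this control of $L_n/\sqrt{2n}$, a Slutsky-type argument shows it is enough to prove
\begin{equation*}
 n^{1/4}\left( \frac{1}{L_n}\int_{\dt_n^*}\theta(s)\,\ell(ds) - \Theta\right)\xrightarrow[n\to\infty]{}0\quad\text{in probability.}
\end{equation*}
Since $\theta$ is unbounded on $\dt_n^*$ --- near $\xrac$ it is exponential with parameter $\hrac\asymp n^{-1/2}$, hence of size $\asymp n^{1/2}$ --- I would introduce a good event $G_n$ of probability tending to $1$ (controlling $\hrac$ from below, $L_n$ from above, and the $\mas$-mass of the balls around the root, via a priori estimates collected in the Appendix), and reduce via Chebyshev's inequality to bounding the conditional second moment of the bracketed quantity on $G_n$ by $\mathrm{o}(n^{-1/2})$.

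The core ingredient is the decomposition of $\ct$ given $\dt_n$ supplied by the disintegration formula (Lemma~\ref{Desintegration}): conditionally on $\dt_n$, the tree $\ct$ is $\dt_n$ decorated by a family of independent, suitably rescaled Brownian CRTs (the ``bushes''), the whole mass measure $\mas$ being carried by the bushes, each connected component $\cb$ of $\ct\setminus\dt_n$ being attached to $\dt_n$ at a single point $p_\cb$. Writing $\pi_n\colon\ct\to\dt_n$ for the nearest-point projection, the key identity is that the image measure $\nu_n:=\mas\circ\pi_n^{-1}$ has conditional mean $\E[\nu_n(ds)\mid\dt_n]=\ell(ds)/L_n$: a fresh $\mas$-distributed leaf has, given $\dt_n$, the same law as the next sampled leaf $\lf_{n+1}$, whose projection on $\dt_n$ is its attachment point $s_{n+1}$, and $s_{n+1}$ is $\ell$-uniform on $\dt_n$. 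Moreover, for $s$ in a bush $\cb$ one has $\theta(s)=\theta(p_\cb)\wedge\theta_\cb(s)$, where $\theta_\cb(s)$ is the time of the first mark of $\cn$ on the portion of $\bra{p_\cb}{s}$ lying in $\cb$; in particular $0\le\theta(p_\cb)-\theta(s)$.

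Combining these facts and splitting the bushes according to whether they hang on $\dt_n^*$ or on the first edge $\bra{\emptyset}{\xrac}$, one obtains
\begin{align*}
 \Theta - \frac{1}{L_n}\int_{\dt_n^*}\theta(s)\,\ell(ds)
 &= \underbrace{\int_{\dt_n^*}\theta\,d\Bigl(\nu_n-\tfrac{\ell}{L_n}\Bigr)}_{\cE_3}
 \;+\;\underbrace{\int_{\bra{\emptyset}{\xrac}}\theta\,d\nu_n}_{\cE_2} \\
 &\quad -\;\underbrace{\sum_{\cb}\int_{\cb}\bigl(\theta(p_\cb)-\theta(s)\bigr)\,\mas(ds)}_{\cE_1},
\end{align*}
with $\cE_1,\cE_2\ge0$, while $\cE_3$ is centred given $(\dt_n,\theta|_{\dt_n})$ since $\nu_n$ is conditionally independent of the marks of $\cn$ on $\dt_n$. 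Each term is then to be shown $\mathrm{o}_{\P}(n^{-1/4})$ on $G_n$ in $L^2$. For $\cE_2$: on $G_n$ the $\mas$-mass of the ball of radius $h$ around the root is $\mathrm{O}(h^2)$ uniformly in $h$ (the height of a $\mas$-leaf being Rayleigh), whereas $\theta$ at height $h$ on $\bra{\emptyset}{\xrac}$ is exponential with parameter $h$; an integration by parts balancing these gives $\E[\cE_2\mid\dt_n]=\mathrm{O}(\hrac)$ and $\E[\cE_2^2\mid\dt_n]=\mathrm{O}(\hrac^2)=\mathrm{O}_{\P}(n^{-1})$. For $\cE_1$: since $\theta_\cb(s)$ stochastically dominates an exponential of parameter $\mathrm{diam}\,\cb$, one has $\E[\theta(p_\cb)-\theta(p_\cb)\wedge\theta_\cb(s)\mid\cdots]\le\theta(p_\cb)^2\,\mathrm{diam}\,\cb$, whence $\cE_1\lesssim(\max_\cb\mathrm{diam}\,\cb)\sum_\cb\mas(\cb)\theta(p_\cb)^2$; I would then invoke Appendix estimates giving $\max_\cb\mathrm{diam}\,\cb=\mathrm{o}_{\P}(n^{-1/4})$ and $\sum_\cb\mas(\cb)\theta(p_\cb)^2=\mathrm{O}_{\P}(1)$ (the latter by a geometric summation over the bushes whose attachment heights are comparable to $\hrac$). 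Finally $\cE_3$ is the fluctuation of $\nu_n$ about its conditional mean; computing $\mathrm{Var}(\cE_3\mid\dt_n,\theta|_{\dt_n})$ from the two-point structure of $\nu_n$ --- whose diagonal part is governed by $\sum_\cb\mas(\cb)^2$ weighted by $\theta^2$, the off-diagonal part being essentially of product form because two fresh leaves project onto the same point of $\dt_n$ with probability $\mathrm{O}(1/n)$ --- and using an Appendix bound on $\E[\sum_\cb\mas(\cb)^2]$, gives $\mathrm{Var}(\cE_3\mid\dt_n,\theta|_{\dt_n})\,\ind{G_n}=\mathrm{o}(n^{-1/2})$. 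Cauchy--Schwarz then combines the three estimates into $\E[(\Theta-\tfrac{1}{L_n}\int_{\dt_n^*}\theta\,d\ell)^2\ind{G_n}]=\mathrm{o}(n^{-1/2})$, which yields \eqref{eq:Conv1}.

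The main obstacle is exactly the unboundedness of $\theta$ near the first branching point $\xrac$: all three error terms have to be estimated \emph{against the correct $\theta$-weights}, so that the quantitative control of the masses, diameters and two-point correlations of the components of $\ct\setminus\dt_n$, together with the calibration of the truncation event $G_n$, is where the real work lies; these are the technical lemmas I would defer to the Appendix.
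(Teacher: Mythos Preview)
Your decomposition $\Theta-\tfrac{1}{L_n}\int_{\dt_n^*}\theta\,\ell(ds)=\cE_3+\cE_2-\cE_1$ is algebraically correct, but the plan of bounding $\cE_1$ and $\cE_2$ \emph{separately} cannot work: for the first-edge contributions these two quantities are individually too large, and only their difference is tractable. Concretely, since $\E[\nu_n\mid\cf_n]=\ell/L_n$ one has
\[
\Espp\!\left[\cE_2\mid\cf_n\right]=\frac{1}{L_n}\int_0^{\hrac}\theta(h)\,dh,
\]
and under $\Pro_\infty$ the integral $\int_0^{t}\theta(s)\,ds$ is a.s.\ infinite for every $t>0$ (the record heights and positions satisfy $T_kS_k\asymp 1$ while there are infinitely many records near~$0$). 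So $\Espp[\cE_2\mid\cf_n]=\infty$ a.s., and your claimed $\E[\cE_2\mid\dt_n]=O(\hrac)$ is false. The sketch you give (``on $G_n$ the $\mas$-mass of the ball of radius $h$ around the root is $O(h^2)$'') conflates $\nu_n([0,h])$ with $\mas(B(\emptyset,h))$: a bush attached at height $h'\le h$ can have arbitrarily large diameter, so its leaves need not lie in any ball around the root. The same blow-up hits your $\cE_1$ estimate: the bound $\cE_1\lesssim(\max_\cb\diam\cb)\sum_\cb\mas(\cb)\theta(p_\cb)^2$ requires control of $\int_{\dt_n}\theta^2\,d\nu_n$, whose first-edge part again has infinite conditional expectation.

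The paper proceeds differently, precisely to avoid separating these terms. It inserts the closed martingale $M_n=\Espp[\Theta\mid\cf_n]$ and handles the two halves independently. For $M_n-\tfrac{1}{L_n}\int_{\dt_n^*}\theta\,d\ell$ (your $\cE_2-\cE_1$ in disguise), the disintegration of Lemma~\ref{Desintegration} gives a bound through $\Es_{q}^{(v)}[\Theta]\le\sqrt{\pi v/2}$, which crucially does \emph{not} involve $q=\theta(p_\cb)$ and therefore survives the regime where $\theta$ is large near the root; this is what produces the $1/L_n$ term in \eqref{eq:BorneVn}. For $\Theta-M_n$ (morally your $\cE_3$), the paper does not compute a two-point formula for $\nu_n$ but instead bounds the martingale increments $M_k-M_{k-1}$ one by one, again via Lemma~\ref{Desintegration}, obtaining $\Espp[(M_k-M_{k-1})^2\ind{E_{k-1}}]=O(k^{-7/4+\epsilon})$ and hence $n^{1/2}\sum_{k\ge n}\Espp[(M_k-M_{k-1})^2\ind{E_{k-1}}]\to 0$. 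If you want to rescue your route, the minimal fix is to keep $\cE_2-\cE_1|_{\text{first edge}}=\int_{\ce_n}\theta\,d\mas$ together (this is exactly the paper's $V_n$ after conditioning) and to replace the ``variance of $\nu_n$'' computation for $\cE_3$ by an increment argument, at which point you are essentially reproducing Propositions~\ref{prop:ConvReste} and~\ref{prop:ConvMartFerm}.
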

Recall that, conditionally on $\ct$, we sample independent leaves
$(\mathrm{x}_n,\ n\ge 1)$ with common distribution $\mas(d\mathrm{x})$. We will
consider the filtration $(\cf_n,\ n\ge 1)$ defined by
\begin{equation*}
 \cf_n = \sigma\left(\{ (\mathrm{T}_1,\dots,\mathrm{T}_n),\ (\theta(s),\
s\in \dt_n) \}\right),\quad n\ge 1.
\end{equation*}
A key step in the proof of the a.s. convergence of $X_n^*/\sqrt{2n}$ to $\Theta$
in
\cite{Abraham2011} is the convergence of $M_n=\Espp[\Theta|\cf_n]$. Since
$(M_n,\ n\ge 1)$ is a closed $L^2$ martingale, it converges
$\Pr^{(1)}_\infty$-a.s. (and in $L^2$) towards $M_\infty = \Theta$ (notice that
$\Theta$ is indeed $\cf_\infty$-measurable, since $\cup_{n\ge 1} \dt_n$ is dense
in $\ct$, and since $\theta$ is continuous $\mas$-almost everywhere). The proof
of Proposition \ref{prop:Conv1} will be divided in two. First, we prove the
next proposition: 
\begin{proposition}\label{prop:ConvReste}
  We have the following convergence in probability:
\begin{equation}
 \lim_{n\to\infty} n^{1/4}\left(\frac{1}{\sqrt{2n}}\int_{\dt_n^*} \theta(s)
\ell(ds) - \Espp[\Theta|\cf_n]\right) = 0.
\end{equation}
\end{proposition}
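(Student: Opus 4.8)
The plan is to compare $\int_{\dt_n^*}\theta(s)\ell(ds)$ with the conditional expectation $\Espp[\Theta\,|\,\cf_n] = \Espp\big[\int_\ct \theta(s)\,\mas(ds)\,\big|\,\cf_n\big]$ by controlling the difference in $L^2$, or at least in $L^1$, after rescaling by $n^{1/4-1/2}=n^{-1/4}$. The first step is to rewrite $\Theta$ using the disintegration of $\ct$ conditionally on $\dt_n$: decompose $\ct$ into the skeleton tree $\dt_n$ together with the subtrees hanging off it. For each point $s\in\dt_n$ the mass measure $\mas$ restricted to the bush grafted at $s$ contributes, and conditionally on $\cf_n$ the separation time inside such a bush is $\theta(s)$ plus an independent fresh record process. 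This lets us write $\Espp[\Theta\,|\,\cf_n]$ as $\int_{\dt_n}\theta(s)\,\nu_n(ds) + R_n$, where $\nu_n$ is the conditional expected mass profile along $\dt_n$ and $R_n$ collects the contributions of the intrinsic separation times inside the bushes. The key structural fact to invoke here is Lemma \ref{Desintegration} (the general disintegration formula promised in the introduction) and the known law of $\dt_n$ from \cite{Aldous1993a}.

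Next I would estimate each piece. For $R_n$: inside each bush the extra separation time is, conditionally, the record process on a tree of small mass started from $q=\theta(s)$, so by \eqref{EspTheta}-type bounds its conditional expectation is $O(\theta(s)\cdot(\text{mass of bush})^2)$ or smaller; summing and using that the total mass outside $\dt_n$ tends to $0$ (indeed $\cup_n\dt_n$ is dense and $\mas(\dt_n^c)\to 0$), one gets $R_n\to 0$ fast enough that $n^{-1/4}R_n\to 0$. For the main term, the difference
\[
\frac{1}{\sqrt{2n}}\int_{\dt_n^*}\theta(s)\,\ell(ds) - \int_{\dt_n}\theta(s)\,\nu_n(ds)
\]
measures how well the length measure on $\dt_n$ rescaled by $(2n)^{-1/2}$ approximates the conditional mass profile $\nu_n$. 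Both are measures on the same finite tree $\dt_n$; I would show their difference, integrated against the bounded (by $q=\infty$ this is delicate — $\theta$ is unbounded near $\emptyset$, so one must truncate away from the root, exactly as $\dt_n^*$ excludes $\bra{\emptyset}{\xrac}$) function $\theta$, is $o_{\Pr}(n^{-1/4})$. Concretely I expect to bound the conditional variance $\mathrm{Var}\big(\int_{\dt_n^*}\theta(s)\ell(ds)/\sqrt{2n}\,\big|\,\cf_n\big)$ — wait, that is $\cf_n$-measurable — rather the point is to compare across successive $n$ and use that the martingale increments $M_{n+1}-M_n$ are square-summable, transferring this to the length-measure functional via the explicit edge-length law and the moment estimates \eqref{eq:MomHn}, \eqref{Chi2n}, \eqref{AsymLong}.

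The main obstacle, I expect, is handling the singularity of $\theta$ near the root: under $\Pr_\infty$ the separation time blows up as $s\to\emptyset$ and $X(\dt_n)=\infty$, which is exactly why the starred versions $\dt_n^*$, $X_n^*$ are used. So throughout one must carefully track the contribution of the region near $\xrac$ — using that $\hrac\sim n^{-1/2}$ in distribution by \eqref{eq:MomHn}, and that $\theta$ on $\bra{\emptyset}{\xrac}$ is, given $\hrac=h$, exponential-scale $\sim 1/h \sim \sqrt n$, so that $\theta(\xrac)/\sqrt n$ is tight — and show the error from excising a shrinking neighbourhood of the root is negligible at scale $n^{-1/4}$. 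Once that truncation is controlled, the remaining estimates are second-moment computations using the explicit distributions of $\dt_n$, $L_n$ and $\hrac$, together with \eqref{EspTheta} and the disintegration lemma; I would organize them so that the dominant error term is $O_{\Pr}(n^{-1/2})$, comfortably $o_{\Pr}(n^{-1/4})$.
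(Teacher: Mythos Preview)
Your outline has the right ingredients---the disintegration of $\ct$ along $\dt_n$, the root singularity, the moment estimates for $L_n$ and $\hrac$---but it misses the key structural reduction and, as written, does not constitute a proof.

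The first concrete gap is your decomposition $\Espp[\Theta\,|\,\cf_n]=\int_{\dt_n}\theta(s)\,\nu_n(ds)+R_n$. Applying Lemma~\ref{Desintegration} with $F(\ct,s)=\mas(\ct)$ gives (via \eqref{eq:DensMas}) that your ``conditional mass profile'' is \emph{exactly} $\nu_n(ds)=\ell(ds)/L_n$, not an unknown measure to be compared with $\ell/\sqrt{2n}$. This is good news, but it also means your proposed main term $\int_{\dt_n}\theta(s)\,\nu_n(ds)=L_n^{-1}\int_{\dt_n}\theta(s)\,\ell(ds)$ is $+\infty$ a.s.\ under $\Pr_\infty$ (the integral over $\bra{\emptyset}{\xrac}$ diverges), so your $R_n$ is $-\infty$ and the decomposition as stated is ill-posed. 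You flag the root singularity as the obstacle, but you do not say how to repair the decomposition, and this is precisely where the work lies.

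The paper avoids this by invoking an inequality from \cite{Abraham2011} (their Lemma~7.4), which sandwiches $\Espp[\Theta|\cf_n]-L_n^{-1}\int_{\dt_n^*}\theta\,\ell$ between an exponentially small $-R_n$ and $V_n=\Espp\big[\int_{\ce_n}\theta\,\mas\,\big|\,\cf_n\big]$, where $\ce_n$ is the union of bushes grafted on the root edge $\bra{\emptyset}{\xrac}$ only. Everything above $\xrac$ cancels exactly; the entire estimate reduces to the root-edge bushes. The paper then computes $\Espp[V_n\,|\,\dt_n]$ explicitly via Lemma~\ref{Desintegration} and the bound $\Es_q^{(v)}[\Theta]\le\sqrt{\pi/2}\min(qv,\sqrt v)$, obtaining $\Espp[V_n\,|\,\dt_n]\,\ind{\{\hrac<1/2\}}\le C\hrac+\sqrt{\pi/2}\,L_n^{-1}$, which is $O_{L^1}(n^{-1/2})$ and hence $o_{\Pr}(n^{-1/4})$. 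A separate, easy step then replaces $1/L_n$ by $1/\sqrt{2n}$ using $\Espp[(1-L_n/\sqrt{2n})^2]=O(n^{-1})$. None of these steps---the sandwich reduction to $V_n$, the explicit bound on $V_n$ via the disintegration, or the $1/L_n\to 1/\sqrt{2n}$ replacement---appears in your proposal beyond a vague gesture, and your claimed bound ``$R_n=O(\theta(s)\cdot(\text{bush mass})^2)$'' is neither justified nor correct in the relevant regime.
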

Then, we prove a more precise statement than the convergence of
$\Espp[\Theta|\cf_n]$ towards $\Theta$. 
\begin{proposition}\label{prop:ConvMartFerm}
We have
\begin{equation}\label{ConvMartFerm}
 \lim_{n\to\infty} n^{1/4} \left( \Espp[\Theta |\cf_n] - \Theta
\right) = 0,
\end{equation}
in probability, as $n\rightarrow \infty$.
\end{proposition}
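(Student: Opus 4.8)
The plan is to prove the quantitative estimate $\Espp[(M_n-\Theta)^2]=O(1/n)$, where $M_n:=\Espp[\Theta\mid\cf_n]$; since $O(1/n)=o(n^{-1/2})$, \eqref{ConvMartFerm} then follows from the Markov inequality:
\[
 \Pr_\infty\big(n^{1/4}\,|M_n-\Theta|>\epsilon\big)\le\frac{\sqrt n}{\epsilon^{2}}\,\Espp\big[(M_n-\Theta)^2\big]=O\big(n^{-1/2}\big)\xrightarrow[n\to\infty]{}0 .
\]
One has $\Espp[(M_n-\Theta)^2]=\Espp[\mathrm{Var}(\Theta\mid\cf_n)]$, so it is enough to bound the conditional variance $\mathrm{Var}(\Theta\mid\cf_n)$. (Equivalently, since $(M_n)$ is a closed $L^2$-martingale with terminal value $\Theta$, one may organise the same estimates through the orthogonal increments, writing $\Espp[(M_n-\Theta)^2]=\sum_{k\ge n+1}\Espp[(M_k-M_{k-1})^2]$ and using that the increment at step $k$ is governed by the single bush that the $k$-th leaf splits off, of mass of order $1/k$.)

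The natural way is to decompose $\ct$ relative to $\dt_n$. Writing, for $x\in\ct$, $\pi_n(x)$ for the projection of $x$ onto $\dt_n$ and $E_x$ for the first fragmentation mark on $\bra{\pi_n(x)}{x}$, one has $\theta(x)=\min(\theta(\pi_n(x)),E_x)$, so that $\Theta=\sum_{e}\Theta_{B_e}$, the sum running over the $2n-1$ edges $e$ of $\dt_n$, where $B_e\subset\ct$ is the (mass‑carrying) bush grafted along $e$, of mass $m_e=\mas(B_e)$, and $\Theta_{B_e}=\int_{B_e}\theta(s)\,\mas(ds)$; note that $\theta$ on $B_e$ is dominated by $\theta$ restricted to $e$, which is $\cf_n$‑measurable. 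Conditionally on $\cf_n$ the remaining randomness is (i) the masses $(m_e)$, which follow a Dirichlet‑type law with $\Espp[m_e\mid\cf_n]$ and $\Espp[m_e^2\mid\cf_n]$ of order $n^{-1}$ and $n^{-2}$ (read off from the explicit description of $\dt_n$ in \cite{Aldous1993a}), and (ii) the precise graft locations along the edges of $\dt_n$ together with the internal structure and marks of each $B_e$, the $B_e$ being conditionally independent given $\cf_n$ and $(m_e)$. Hence
\[
 \mathrm{Var}(\Theta\mid\cf_n)\le\sum_e\Espp\big[\Theta_{B_e}^2\mid\cf_n\big]+\mathrm{Var}\Big(\textstyle\sum_e\Espp[\Theta_{B_e}\mid\cf_n,(m_e)]\,\Big|\,\cf_n\Big),
\]
and for an edge $e$ bounded away from the root (say at distance $\ge\epsilon$) one gets $\Espp[\Theta_{B_e}^2\mid\cf_n]\le C(\epsilon)\,m_e^2=O(n^{-2})$, while the second (masses) term contributes $O(n^{-1})$, so that summing over the $2n-1$ edges yields $O(1/n)$ — provided the contribution of the edges near the root can be controlled.

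That control is the main obstacle, because $\theta$ is unbounded near $\emptyset$: along the first edge $\bra{\emptyset}{\xrac}$ one has $\theta(\pi_n(x))\notin L^2$, so the crude bound $\Theta_{B_e}\le\big(\sup_{s\in e}\theta(s)\big)m_e$ fails there. One must instead keep throughout the truncation $\theta(x)=\min(\cdots,E_x)$ by the independent marks, which is precisely what makes the relevant moments finite (cf.\ \eqref{EspTheta}), and treat the region $\{x\in\ct:d(\emptyset,x)<\epsilon_n\}$ separately, with a cutoff $\epsilon_n\downarrow0$ chosen to balance the resulting error terms (there the first edge $\bra{\emptyset}{\xrac}$ is always contained in $\dt_n$, which gives a handle on that region). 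All of this — the Dirichlet moments of the bush masses, the conditional laws of graft locations and of distances inside the bushes, and the near‑root estimates — is most conveniently carried out after disintegrating over the excursion measure $\Ex$ by means of Lemma \ref{Desintegration}, under which these quantities are available in closed form.
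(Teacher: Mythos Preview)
Your outline is the same strategy as the paper's — both reduce to bounding $\Espp[(M_n-\Theta)^2]=\sum_{k>n}\Espp[(M_k-M_{k-1})^2]$ via the bush decomposition of $\ct$ over $\dt_n$, and both recognise the blow-up of $\theta$ near $\emptyset$ as the crux — but your proposal stops precisely where the work begins. You correctly diagnose that the crude domination $\Theta_{B_e}\le(\sup_{s\in e}\theta(s))\,m_e$ fails on the root edge, yet your proposed remedy (retain the truncation $\min(\cdot,E_x)$, introduce a cutoff $\epsilon_n\downarrow0$, ``carry it out via Lemma~\ref{Desintegration}'') is a list of ingredients, not an argument: there is no choice of $\epsilon_n$, no bound on the near-root contribution, no balance of error terms, and no verification that the pieces assemble to $O(1/n)$. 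Several side claims are also asserted without justification: the conditional independence of the $B_e$ given $\cf_n$ and $(m_e)$ (the mass constraint $\sum_e m_e=1$ makes this delicate), the ``Dirichlet-type'' moments $\Espp[m_e^2\mid\cf_n]=O(n^{-2})$ uniformly in $e$, and the $O(1/n)$ bound on the variance-of-conditional-means term. Even away from the root your bound $\Espp[\Theta_{B_e}^2\mid\cf_n]\le C(\epsilon)\,m_e^2$ hides an $\cf_n$-measurable factor $(\sup_{s\in e}\theta(s))^2$, and summing those over the $2n-1$ edges requires controlling $\sum_e(\sup_{s\in e}\theta(s))^2$, which you do not do.

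For comparison, the paper does not claim $O(1/n)$ and does not need it. It works with the increments $M_k-M_{k-1}=G_k+H_k$ (the bush containing $\lf_k$ versus the reweighting of the others), introduces the truncation events $E_{k-1}=\{L_{k-1}\ge k^{1/2-\epsilon}\}\cap\{k^{-2}\le h_{\emptyset,k-1}\le 1/2\}$ to tame both the root singularity and the tails of $L_k$, and then applies Lemma~\ref{Desintegration} together with Lemmas~\ref{Theta3/2}, \ref{lem:BorneL2}, \ref{lem:Fql} and~\ref{Momentsr} to obtain only $\Espp[(M_k-M_{k-1})^2\ind{E_{k-1}}]=O(k^{-7/4+7/2\epsilon})$, hence $\sum_{k>n}=O(n^{-3/4+7/2\epsilon})=o(n^{-1/2})$, which already yields \eqref{ConvMartFerm} via Borel--Cantelli for the truncation. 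Your target $O(1/n)$ amounts to the sharper increment bound $O(k^{-2})$; nothing in your sketch indicates how to gain the missing $k^{-1/4}$, and the paper's $G_k$ estimate --- driven by Lemma~\ref{Theta3/2} --- is exactly of order $k^{-7/4+o(1)}$.
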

Of course, Propositions \ref{prop:ConvReste} and \ref{prop:ConvMartFerm} imply
Proposition \ref{prop:Conv1}. Before we can prove Proposition
\ref{prop:ConvReste}, we need to describe more precisely how the marked tree
$(\ct,\theta)$ is distributed conditionally on $\cf_n$. 

\subsection{Subtree decomposition}

Given the subtree $\dt_n$, the set $\ct\setminus \dt_n$ is a random forest; let
$(\mathcal{X}_i,\ i\in I_n)$ be the collection of its connected components. For
any connected component $\mathcal{X}_i$ of $\ct\setminus\dt_n$, there is a
unique point $s_i\in \dt_n$ such that
\[ 
 \bigcap_{x\in \mathcal{X}_i} \llbracket \emptyset,x\rrbracket = \llbracket
\emptyset,s_i \rrbracket.
\]
For any $i\in I_n$, we will write $\ct_i$ for the tree $\mathcal{X}_i\cup
\{s_i\}$, rooted at $s_i\in \dt_n$. We will sometimes use
the notation 
\begin{align}\label{def:En}
 \ce_n = & \{ s\in \ct,\ \llbracket \emptyset,s \rrbracket \cap \dt_n^* =
\emptyset \} \\ 
 = & \bigcup_{i\in I_n,\ s_i \in \llbracket
\emptyset,\xrac \rrbracket} \mathcal{X}_i  \nonumber
\end{align}
for the set of all vertices in the tree such that the unique path linking them
to the root intersects $\dt_n$ on $\llbracket \emptyset,\xrac
\rrbracket$. 

Many things are known about the distribution of the forest $(\ct_i,\ i\in I_n)$.
For instance, Pitman pointed out (see \cite{Dong2006}) that the
stickbreaking construction of the CRT in \cite{Aldous1991a} implied that the
sequence $(\mas(\ct_i),\ i\in I_n)$, ranked in decreasing order, is distributed
according to the Poisson-Dirichlet distribution with parameters $\alpha=1/2$ and
$\theta = n-1/2$ (for more background on Poisson-Dirichlet distributions, see
\cite{Pitman2006}). We will give another description, focusing on the tree
structure of $\ct$ conditionally on $\cf_n$. This description can be seen as a
conditioned version of Theorem 3 in \cite{LeGall1993}.

\begin{lemma}\label{Desintegration}
 Let $F$ be a nonnegative functional on $\mathbb{T}\times \dt_n$. Then
\begin{equation}
 \Espp\left[\left.\sum_{i\in I_n} F(\ct_i,s_i) \right|
\cf_n\right]
= \int_0^1 \frac{\expp{-L_n^2v/(2-2v)}}{\sqrt{2\pi} v^{3/2}(1-v)^{3/2}}\ dv
 \int_{\dt_n} \ell(ds)
\Es_{\theta(s)}^{(v)}[F(\ct,s)].
\end{equation}
\end{lemma}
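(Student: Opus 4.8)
The plan is to identify, conditionally on $\cf_n$, the joint law of the forest $(\ct_i, s_i)_{i \in I_n}$ of bushes hanging off $\dt_n$, together with the marks $\theta$ restricted to each bush. The key structural input is the stick-breaking (line-breaking) construction of the CRT: the tree $\ct$ is built by attaching a sequence of segments to the skeleton, and conditionally on $\dt_n$ one can see $\ct \setminus \dt_n$ as a family of independent sub-CRTs grafted onto $\dt_n$. More precisely, I would first recall (or derive from Le Gall's Theorem 3 in \cite{LeGall1993}, here in conditioned form) that, given $\cf_n$, the bushes $(\ct_i, i\in I_n)$ are attached at points $s_i$ distributed along $\dt_n$ according to a Poisson process with intensity proportional to $\ell(ds)$ weighted by the bush masses, and that each $\ct_i$ is a Brownian CRT whose scaling parameter is the random mass it carries. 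The normalizing constant must be such that $\sum_i \mas(\ct_i) = 1 - 0 = 1 - $ (mass of $\dt_n$), but since $\dt_n$ is $\mas$-null this is just $1$; the Poisson--Dirichlet$(1/2, n-1/2)$ description of the ranked masses quoted just above is the consistency check.

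Concretely, I would argue as follows. Fix the bush hanging at a generic point $s \in \dt_n$ of ``relative mass'' $v \in (0,1)$; conditionally it is a $\Pr^{(v)}$-CRT, i.e.\ a Brownian CRT scaled to have mass $v$, rooted at $s$. Now the separation-time field $\theta$ on this bush: a point $x$ in the bush has $\llbracket \emptyset, x\rrbracket = \llbracket \emptyset, s\rrbracket \cup \llbracket s, x\rrbracket$, so its separation time is the minimum of $\theta(s)$ (already determined, hence a fixed starting value) and the first mark of $\cn$ on the segment $\llbracket s, x\rrbracket$ inside the bush. Since $\cn$ restricted to the bush is an independent Poisson process with intensity $\ell \otimes dt$, the field $(\theta(x), x \in \ct_i)$ conditionally on the bush and on $\theta(s)$ is exactly the separation process on a Brownian CRT of mass $v$ \emph{started at} $q = \theta(s)$ — that is, its law is $\Es^{(v)}_{\theta(s)}$. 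This is the point where the notation $\Pr^{(r)}_q$ and $\Es^{(r)}_q$ introduced earlier pays off. Summing $F(\ct_i, s_i)$ over $i$ and taking conditional expectation, the Poisson attachment along $\dt_n$ turns the sum into $\int_{\dt_n} \ell(ds)\, \Es^{(v)}_{\theta(s)}[F(\ct, s)]$ integrated against the law of the (relative) mass $v$ of the bush at $s$.

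It then remains to identify the density of $v$. Here I would use the explicit law \eqref{Chi2n} of the skeleton length $L_n = \ell(\dt_n)$ together with the self-similar ($\alpha = 1/2$) fragmentation structure of Aldous--Pitman, or equivalently a direct computation from the stick-breaking weights: the mass split between $\dt_n$ and a single grafted bush is governed by a Beta-type density, and the relevant intensity for a bush of relative mass $v$ attached along a skeleton of total length $L_n$ is $\dfrac{\expp{-L_n^2 v/(2-2v)}}{\sqrt{2\pi}\, v^{3/2}(1-v)^{3/2}}\, dv$. The exponential factor $\expp{-L_n^2 v/(2-2v)}$ is precisely the ``conditioning'' term: it is the Radon--Nikodym weight that records the probability, under the unconditioned excursion measure $\Ex$ (via the disintegration $\Ex[F(\ct)] = \int_0^\infty \frac{d\sigma}{\sqrt{2\pi}\sigma^{3/2}} \Es^{(\sigma)}[F(\ct)]$ recalled in the introduction), that the complement of a mass-$v$ bush closes up into a skeleton of the prescribed length. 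I would check the constant and the exponent by testing the formula on $F \equiv 1$, where the left side is $\Espp[\Card(I_n) \mid \cf_n]$ (infinite, consistent with the $v^{-3/2}$ non-integrability near $0$) and more usefully on $F$ supported on bushes of mass $\geq \epsilon$, matching against the known Poisson--Dirichlet / Chi laws.

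The main obstacle is the rigorous passage from the stick-breaking description to the statement that, conditionally on $\cf_n$, the bushes are \emph{independent} Brownian CRTs with the asserted mixing measure on their masses, and that the mark field on each bush is conditionally an $\Es^{(v)}_{\theta(s)}$ separation process independent across bushes — i.e.\ correctly disintegrating the Poisson process $\cn$ along the decomposition $\ct = \dt_n \sqcup \bigsqcup_i \mathcal{X}_i$. This is essentially a splitting property of Poisson processes combined with the branching property of the CRT, but making it precise requires carefully setting up the regular conditional distribution of $(\ct, \cn)$ given $\cf_n$ and invoking the conditioned form of \cite{LeGall1993}. Once that structural statement is in hand, the displayed formula follows by Campbell's formula for the Poisson attachment and the Markov-type property of $\theta$ at the grafting point described above.
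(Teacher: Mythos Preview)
Your structural intuition is correct: conditionally on $\cf_n$, the bushes $(\ct_i,s_i)$ do behave as a Poisson family of rescaled CRTs grafted along $\dt_n$, and the separation process on each bush is indeed the process started at $\theta(s_i)$. But your proposal leaves the crucial step---the derivation of the explicit intensity $\dfrac{\expp{-L_n^2 v/(2-2v)}}{\sqrt{2\pi}\, v^{3/2}(1-v)^{3/2}}\,dv$---essentially unproven. Saying it is ``the Radon--Nikodym weight that records the probability\ldots'' and then checking it against the Poisson--Dirichlet or $\mathrm{Chi}(2n)$ laws is a consistency argument, not a derivation; and the ``main obstacle'' you flag at the end is precisely the content of the lemma.

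The paper takes a different, computational route that sidesteps this obstacle. Instead of working directly under $\Espp=\Es^{(1)}_\infty$ and trying to describe the conditional law of the forest, it passes to the $\sigma$-finite excursion measure $\Ex_\infty$, under which (this is Le Gall's Theorem~3) the bushes form an explicit Poisson point process on $\dt_n$ with intensity $\ell(ds)\otimes \Ex_{\theta(s)}[d\ct]$. A Palm formula turns $\sum_i F(\ct_i,s_i)$ into an integral over $\dt_n$. The conditioning on $\{\sigma=1\}$ is then handled by computing the Laplace transform $\Ex_\infty\bigl[Y\sum_i F(\ct_i,s_i)\expp{-\mu\sigma}\bigr]$: one disintegrates the bush law $\Ex_{\theta(s)}$ over its mass $v$, and the remaining factor $\expp{-L_n\sqrt{2\mu}}$ is recognized as the Laplace transform of the $1/2$-stable density. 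Inverting in $\mu$ produces the factor $(1-v)^{-3/2}\expp{-L_n^2/(2-2v)}$ directly. Finally, the absolute continuity relation between the law of $\dt_n$ under $\Ex_\infty$ and under $\Espp$ (Corollary~4 of \cite{LeGall1993}) converts the result to the conditioned setting; this is where an $\expp{-L_n^2/2}$ factor is absorbed, leaving $\expp{-L_n^2 v/(2-2v)}$.

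In short: your route is conceptually sound but would require establishing the conditional Poisson description under $\Espp$ together with the exact mass intensity, which is at least as hard as the lemma itself. The paper's Laplace-transform computation under $\Ex$ is the standard device for conditioning Brownian-tree functionals on total mass, and it delivers the density by calculation rather than by matching.
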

\begin{proof}
 Let $Y$ be a $\cf_n$-measurable random variable; let us compute
$\Es_{\infty}^{(1)}\left[ Y \sum_{i\in I_n}
F(\ct_i,s_i)\right].$
In order to do this computation, we will perform a disintegration with respect
to $\sigma$ in the following expression: for $\mu\ge 0$,
\begin{align*}
 I(\mu) & =  \Ex_\infty\left[Y \sum_{i\in I_n}
F(\ct_i,s_i)\expp{-\mu \sigma} \right] \\
 & =  \Ex_\infty\left[Y \sum_{i\in I_n}
F(\ct_i,s_i)\expp{-\mu \sigma_i} \expp{-\mu\sum_{j\neq i} \sigma_j}
\right]. \\
\intertext{Using a Palm formula, we get:}
 & =  \Ex_\infty\left[ \int_{\dt_n} \ell(ds)
\Ex_{\theta(s)}[F(\ct,s)\expp{-\mu\sigma}] \exp\left(-\int_{\dt_n}\ell(ds)
\int_0^\infty\frac{du}{\sqrt{2\pi}u^{3/2}}(1-\expp{-\mu\sigma})\right) \right]
\\
 & =  \Ex_\infty\left[ Y  \int_{\dt_n} \ell(ds)
 \Ex_{\theta(s)}[F(\ct,s)\expp{-\mu\sigma}] \expp{-L_n\sqrt{2\mu}} \right],
\\
\intertext{since $\Ex[1-\exp(-\mu\sigma)]=\sqrt{2\mu}$. We can disintegrate the
$\sigma$-finite measure
$\Ex_{\theta(s)}$ according to the total mass $\sigma$:}
 I(\mu) & =  \Ex_\infty\left[ Y  \int_{\dt_n} \ell(ds)
\int_0^\infty \frac{dv}{\sqrt{2\pi}v^{3/2}}
\Es_{\theta(s)}^{(v)}[F(\ct,s)\expp{-\mu\sigma}] \expp{-L_n\sqrt{2\mu}}
\right] \\
& =  \Ex_\infty\left[ Y \int_{\dt_n} \ell(ds)
\int_0^\infty
\frac{dv}{\sqrt{2\pi}v^{3/2}}
\Es_{\theta(s)}^{(v)}[F(\ct,s)]\expp{-\mu v} \int_0^\infty L_n
\frac{dr}{\sqrt{2\pi r^3}}
\expp{-\mu r-L_n^2/(2r)} \right],
\end{align*}
using the well-known formula
\[
 \expp{a\sqrt{2s}} = \int_0^\infty \expp{-sr} \frac{a}{\sqrt{2\pi r^3}}
\expp{-a^2/2r}\ dr,
\]
for the Laplace transform of the density
of the 1/2-stable subordinator (see for instance Chapter III, Proposition (3.7)
in \cite{Revuz2004}). By the Fubini-Tonelli theorem, we then get:
\begin{align*}
 I(\mu) & =  \Ex_\infty\left[ Y  \int_{\dt_n} \ell(ds)
\int_0^\infty \frac{dv}{\sqrt{2\pi}v^{3/2}}
\Es_{\theta(s)}^{(v)}[F(\ct,s)]\expp{-\mu v} \int_v^\infty
\frac{L_n\expp{-\mu(t-v)}\ dt}{\sqrt{2\pi}(t-v)^{3/2}}\expp{-L_n^2/(2t-2v)}
\right]\\
 & = \int_0^\infty \frac{\expp{-\mu t}dt}{\sqrt{2\pi}t^{3/2}}
\Ex_\infty\left[
Y  \int_{\dt_n} \ell(ds)
 \int_0^t \frac{L_n t^{3/2}\ dv}{\sqrt{2\pi}v^{3/2}(t-v)^{3/2}}
 \expp{-L_n^2/(2t-2v)}
\Es_{\theta(s)}^{(v)}[F(\ct,s)] \right].
\end{align*}
Now, we can use the scaling property of the marked tree $(\ct,\theta)$ under
$\Ex_\infty$ and that the fact the
total mass $\sigma$ has density $dt/(\sqrt{2\pi}t^{3/2})$ under
$\Ex_\infty$, to get that, for any $\cf_n$-measurable random variable $Y$, 
\begin{equation*}
 \Espp\left[Y \sum_{i\in I_n} F(\ct_i,s_i)\right] 
=  \Ex_\infty\left[Y \frac{1}{\sqrt{2\pi}} \int_0^1 \frac{L_n\
dv}{v^{3/2}(1-v)^{3/2}} \expp{- L_n^2/(2-2v)} \times
\int_{\dt_n}\ell(ds)\Es_{\theta(s)}^{(v)}[F(\ct,s)] \right].
\end{equation*}
Now, recall the absolute continuity relation
the distribution of
$\dt_n$ under $\Ex_\infty$ and under $\Espp$ (Corollary 4 in
\cite{LeGall1993}): for any measurable bounded functional $G$, 
\begin{equation*}
 \Espp[G(\dt_n)] = \Ex_\infty\left[\ell(\dt_n) \expp{-\ell(\dt_n)^2/2} G(\dt_n)
\right].
\end{equation*}
Since $\exp(-L_n^2/(2-2v)) = \exp(-L_n^2/2) \cdot \exp(-L_n^2v/(2-2v))$, we get:
\begin{equation*}
 \Espp\left[Y \sum_{i\in I_n} F(\ct_i,s_i)\right] = \Espp \left[
Y\frac{1}{\sqrt{2\pi}} \int_0^1 \frac{dv}{v^{3/2}(1-v)^{3/2}} \expp{-
L_n^2v/(2-2v)} \int_{\dt_n}\ell(ds)\Es_{\theta(s)}^{(v)}[F(\ct,s)] \right].
\end{equation*}
Taking conditional expectations with respect to $\cf_n$ gives the desired
result.
\end{proof}

\begin{remark}
 Notice that if $F(\ct,s)=\mas(\ct)$, we find the striking identity
\begin{equation}\label{eq:DensMas}
  \frac{1}{\sqrt{2\pi}} \int_0^1
\frac{L_n\expp{-L_n^2v/(2-2v)}}{v^{1/2}(1-v)^{3/2}} dv = 1.
\end{equation}
In other words, the function
$f_a(v)=a\expp{-a^2v/(2-2v)}/(\sqrt{2\pi} v^{1/2}(1-v)^{3/2}))$ is a probability
density on $(0,1)$ for any $a>0$. This probability distribution has already
been described in the context of the Aldous-Pitman fragmentation: if $a>0$,
Aldous and Pitman show that it is the distribution of the size of the fragment
containing the root at time $a$. We refer to \cite{Aldous1998,BertoinRFC} for
more information on the ``tagged fragment'' process in self-similar
fragmentations.
\end{remark}

\subsection{Proof of Proposition \ref{prop:ConvReste}}

We now have everything we need to prove Proposition \ref{prop:ConvReste}.

\begin{proof}[Proof of Proposition \ref{prop:ConvReste}]
We will start from Lemma 7.4 in \cite{Abraham2011}: we have a.s. for $n\ge 1$
\begin{equation}\label{InegVn}
 -R_n \le \Espp[\Theta|\cf_n] - \frac{1}{L_n} \int_{\dt_n^*}
\theta(s)\ \ell(ds) \le V_n,
\end{equation}
where we noted $V_n=\Espp[\int_{\ce_n} \theta(s) \mas(ds)|\cf_n]$ (recall the
definition
of $\ce_n$ in \eqref{def:En}) and where $R_n = \exp(-L_n^2/4)
\theta(h_{\emptyset,n})^2/4$. Furthermore, there $\Pr_\infty$-a.s. exists
a constant $C>0$ such that 
\[
 R_n \le C n^8 \expp{-L_n^2/8}.
\]
Thus, considering that $L_n/\sqrt{2n}$ converges
a.s. to 1 (\ref{AsymLong}), we get that $n^{1/4}R_n$
converges a.s. to 0. Therefore, we needn't worry about the left-hand side of
\eqref{InegVn} and the only thing we need to prove is that $n^{1/4}V_n$
converges in distribution to 0 as $n\to\infty$. The proof in \cite{Abraham2011}
uses a dominated convergence argument to show that $V_n$ a.s. converges to 0,
but we will need a more precise estimate for $V_n$. By definition, using the
notation 
\[
 \Theta_i^{(n)} = \int_{\ct_i} \theta(s)\ \mas(ds),\quad i\in I_n,
\]
we have 
\begin{equation*}
 V_n  = \Espp\left[\int_{\ce_n} \theta(s)\ \mas(ds) \Big|\cf_n\right] = \Espp
\left[\sum_{i\in I_n} \Theta_i \ind{\{ s_i\in \llbracket \emptyset, \xrac
\rrbracket \}} \Big| \cf_n\right].
\end{equation*}
Using the disintegration formula from Lemma \ref{Desintegration}, we get:
\begin{align*}
 V_n & = \frac{1}{\sqrt{2\pi}} \int_0^1
\frac{dv}{v^{3/2}(1-v)^{3/2}}\expp{-L_n^2v/(2-2v)} \int_{\llbracket
\emptyset,\xrac\rrbracket} \Es_{\theta(s)}^{(v)}[\Theta]\ \ell(ds). \\
\intertext{Using the fact that $\theta(s)$ is, conditionally on $\dt_n$,
exponentially distributed with parameter $s$, we get:}
 \Espp\left[V_n|\dt_n\right] & = \frac{1}{\sqrt{2\pi}} \int_0^1
\frac{dv}{v^{3/2}(1-v)^{3/2}}\expp{-L_n^2v/(2-2v)}
\int_0^{\hrac} ds \int_0^\infty s\expp{-st} \Es_{t}^{(v)}[\Theta]\ dt \\
 & \le \frac{1}{2} \int_0^1 \frac{dv}{v^{3/2}(1-v)^{3/2}}\expp{-L_n^2v/(2-2v)}
\int_0^{\hrac} ds \left( \int_0^{v^{-1/2}} stv\expp{-st}\ dt +
\int_{v^{-1/2}}^\infty s\sqrt{v}\expp{-st}\ dt \right),
\end{align*}
using the domination $\Es_q^{(v)}[\Theta] \le
\sqrt{\pi/2}\min(qv,\sqrt{v})$ (Lemma \ref{lem:majoHqr}). For technical
reasons, we will restrict ourselves to the event $\{ \hrac<1/2 \}$, but this
will not be too restrictive, since $\hrac\to0$ a.s. Computing the
integrals, we eventually get that $\Espp[V_n|\dt_n]\ind{\{ \hrac<1/2 \}}$ is
dominated by
\begin{equation*}
 W_n=\left(\frac{1}{2} \int_0^1
\frac{dv}{v^{1/2}(1-v)^{3/2}}\expp{-L_n^2v/(2-2v)}
\int_0^{\hrac} \frac{1-\expp{-s/\sqrt{v}}}{s}\ ds\right)\ind{\{ \hrac<1/2 \}}.
\end{equation*}
We will use the domination $(1-\exp(-s))/s \le \ind{[0,1]}(s) + 2/(s+1)
\ind{(1,\infty)}(s)$, which gives:
\begin{align}
\nonumber W_n & \le \frac{1}{2} \int_0^1 
\frac{\expp{-L_n^2v/(2-2v)}}{v^{1/2}(1-v)^{3/2}}\
dv \left(\frac{\hrac}{\sqrt{v}} \ind{\{\hrac/\sqrt{v} \le 1\}} \right. \\
 & \quad \quad \quad \quad \quad \quad \quad \quad \left. +
\left(1+2\log\left(\frac{\hrac/\sqrt{v}+1}{2}\right)\right)
\ind{\{\hrac/\sqrt{v} \ge 1\}} \right)\ind{\{ \hrac<1/2 \}} \nonumber \\
 &  = \left( \frac{1}{2} \int_0^{\hrac^2}
\frac{\expp{-L_n^2v/(2-2v)}}{v^{1/2}(1-v)^{3/2}} \left(1-2 \log 2 +
2\log\left(\frac{\hrac}{\sqrt{v}}
+1\right)\right)\ dv \right) \ind{\{ \hrac<1/2 \}}
\label{Vn1} \\
  & \quad \quad \quad \quad \quad \quad \quad \quad + \left(\frac{1}{2}
\int_{\hrac^2}^1
\frac{\expp{-L_n^2v/(2-2v)}}{v^{1/2}(1-v)^{3/2}} \frac{\hrac}{\sqrt{v}}\
dv\right) \ind{\{ \hrac<1/2 \}}.
\label{Vn2}
\end{align}
As far as \eqref{Vn1} is concerned, we can dominate $\exp(-\alpha
L_n^2v/(1-v))$ by 1 and $(1-v)^{-3/2}$ by its value at $\hrac^2$, \emph{i.e.}
$(1-\hrac^2)^{-3/2}<(3/4)^{-3/2}$ to get:
\begin{equation*}
 \eqref{Vn1} \le \frac{1}{2(3/4)^{3/2}} \int_0^{\hrac^2}
\frac{dv}{\sqrt{v}}\left(1-2\log2+2\log\left(\frac{\hrac}{\sqrt{v}}
+1\right)\right)\ind{\{ \hrac<1/2 \}} = C\cdot \hrac \ind{\{ \hrac<1/2 \}},
\end{equation*}
where $C$ is some deterministic constant. Concerning \eqref{Vn2}, we can bound
$1/\sqrt{v}$ by $1/\hrac$, to get:
\begin{align*}
 \eqref{Vn2} & \le \left(\frac{1}{L_n} \int_{\hrac^2}^1
\frac{1}{2} \frac{L_n \expp{-L_n^2v/(2-2v)}}{v^{1/2}(1-v)^{3/2}}\ dv\right)
\ind{\{ \hrac<1/2 \}} \\ 
& \le \left(\frac{1}{L_n}\int_{0}^1 \frac{1}{2} \frac{L_n
\expp{-L_n^2v/(2-2v)}}{v^{1/2}(1-v)^{3/2}}\ dv\right)\ind{\{ \hrac<1/2 \}} =
\frac{\sqrt{\pi}}{\sqrt{2}L_n}\ind{\{ \hrac<1/2 \}},
\end{align*}
by equation \eqref{eq:DensMas}. Putting things together, we get that
$\Pr_\infty$-a.s.
\begin{equation}\label{eq:BorneVn}
 \Espp\left[V_n|\dt_n\right]\ind{\{ \hrac<1/2 \}} \le
C\cdot\hrac\ind{\{ \hrac<1/2 \}}+ \frac{\sqrt{\pi}}{\sqrt{2}L_n}. 
\end{equation}
Now, $n^{1/4}\hrac\ind{\{ \hrac<1/2 \}}$ converges in $L^1$ to 0 thanks to
\eqref{eq:MomHn}.
Similarly, an easy moment computation using \eqref{Chi2n} for the density of
$L_n$ shows that $n^{1/4}/L_n$ also converges in $L^1$ to 0, so
that the same is true for $n^{1/4}V_n \ind{\{ \hrac<1/2 \}}$. Hence, $n^{1/4}V_n
\ind{\{ \hrac<1/2 \}}$ converges to 0 in probability. Since a.s. there is a
(random) $n_0\ge1$ such that $\hrac<1/2$ for any $n\ge n_0$, we also get that
$n^{1/4}V_n$ converges to 0 in probability. Combining this with the a.s.
convergence to 0 for $n^{1/4}R_n$, we indeed get a convergence in probability:
\begin{equation}\label{ConvProb}
 \lim_{n\to\infty} n^{1/4}\left(\Espp[\Theta|\cf_n] - \frac{1}{L_n}
\int_{\dt_n^*} \theta(s)\ \ell(ds) \right)=0.
\end{equation}
To get the announced result, we still have to prove that 
\begin{equation}\label{ConvProbLn}
 \lim_{n\to\infty} n^{1/4}\left(\frac{1}{L_n}  - \frac{1}{\sqrt{2n}} \right)
\int_{\dt_n^* }\theta(s)\ \ell(ds) = 0.
\end{equation}
This is not difficult: simply write 
\[ n^{1/4}\left(\frac{1}{L_n} 
-\frac{1}{\sqrt{2n}}\right) \int_{\dt_n^* }\theta(s) \ell(ds)=n^{1/4}\left( 1 -
\frac{L_n}{\sqrt{2n}} \right)\left( \frac{1}{L_n} \int_{\dt_n^*}
\theta(s)\ell(ds) \right).
\]
Now, recall that $1/L_n \int_{\dt_n^*}\theta(s)\ell(ds)$ converges to $\Theta$
$\Pr_\infty$-a.s., hence in probability. Furthermore, we can compute 
\[
 n^{1/2}\Espp\left[\left(1-\frac{L_n}{\sqrt{2n}}\right)^2\right] = n^{1/2}
\Espp\left(1+\frac{L_n^2}{2n}-2\frac{L_n}{\sqrt{2n}}\right),
\]
Using the density \eqref{Chi2n} of $L_n$, we easily get that 
\[
 \Espp[L_n] = \sqrt{2} \frac{\Gamma(n+1/2)}{\Gamma(n)}\quad;
\quad \Espp\left[L_n^2\right]=2n. 
\]
Therefore, after computations, we get $n^{1/2}\Espp[(1-L_n/\sqrt{2n})^2]\sim
1/(8\sqrt{n})$, so that in the end, $n^{1/4}(1-L_n/\sqrt{2n})$ converges to 0 in
$L^2$. This implies convergence in probability, hence the
convergence of \eqref{ConvProbLn}. 
\end{proof}

\subsection{Rate of convergence in the Martingale Convergence Theorem}

Before we can move on to Proposition \ref{prop:ConvMartFerm}, we are going to
state a lemma that will be needed in the proof.

\begin{lemma}\label{Theta3/2}
 If $1<\alpha<2$, then, the sequence $\int_{\dt_n^*}
\theta(s)^\alpha \ell(ds)/L_n$ is bounded in $L^1(\Pr_\infty)$.
\end{lemma}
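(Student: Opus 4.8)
The plan is to exploit the structure of $\dt_n^*$: conditionally on the subtree $\dt_n$, the separation times $(\theta(s),\ s\in\dt_n)$ are those of the linear record process along each edge, driven by the Poisson measure $\cn$ restricted to $\dt_n$. The key observation is that on $\dt_n^*$ (which omits the root edge) $\theta$ is bounded away from $\infty$ in a controllable way, and the expectation $\Espp[\theta(s)^\alpha \mid \dt_n]$ can be computed from the exponential-type law of $\theta(s)$. First I would write, using Fubini and the fact (stated just before Proposition \ref{prop:ConvReste} and used throughout Section 1) that conditionally on $\dt_n$ the variable $\theta(s)$ is exponentially distributed with parameter $\ell(\bra{\emptyset}{s})$,
\[
 \Espp\left[ \frac{1}{L_n}\int_{\dt_n^*} \theta(s)^\alpha \ell(ds)\ \Big|\ \dt_n \right]
 = \frac{1}{L_n}\int_{\dt_n^*} \Gamma(\alpha+1)\, \ell(\bra{\emptyset}{s})^{-\alpha}\, \ell(ds),
\]
since $\Es[E^\alpha]=\Gamma(\alpha+1)\lambda^{-\alpha}$ for $E\sim\mathrm{Exp}(\lambda)$. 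So the problem reduces to controlling $L_n^{-1}\int_{\dt_n^*} \ell(\bra{\emptyset}{s})^{-\alpha}\ell(ds)$ in $L^1(\Pr_\infty)$, which is a purely geometric statement about the random tree $\dt_n$.

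Next I would bound the geometric integral. Write $\dt_n^* = \bigcup_{e} e$ as a union of its $2n-2$ edges; along an edge $e=\bra{a}{b}$ one has $\ell(\bra{\emptyset}{s}) \ge \ell(\bra{\emptyset}{a})\ge \hrac$ for every $s\in e$, since every path from the root into $\dt_n^*$ passes through $\xrac$. A first crude bound is therefore $L_n^{-1}\int_{\dt_n^*}\ell(\bra{\emptyset}{s})^{-\alpha}\ell(ds) \le \hrac^{-\alpha}$, but taking expectations of $\hrac^{-\alpha}$ fails for $\alpha\ge 1$ by \eqref{eq:MomHn}, so a finer splitting is needed. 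The better approach is to integrate $\ell(\bra{\emptyset}{s})^{-\alpha}$ radially: with $h(s)=\ell(\bra{\emptyset}{s})$ the height function, the pushforward of $\ell$ restricted to $\dt_n^*$ under $h$ has a density bounded by the number of edges crossing each level, which is at most $2n-2$; but more usefully, on the portion of $\dt_n^*$ at height $\ge 1$ the factor $\ell(\bra{\emptyset}{s})^{-\alpha}\le 1$ and the integral is at most $L_n^{-1}\ell(\dt_n^*)\le 1$, while on the portion at height in $[\hrac,1]$ one uses $1<\alpha<2$ to keep $\int_{\hrac}^1 u^{-\alpha}\,du \le (\alpha-1)^{-1}\hrac^{1-\alpha}$. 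Combined with the branching structure this yields a bound of the form $C(1 + L_n^{-1}\, \cn(\{s: h(s)\le 1\})\-\text{type term})$; more simply, since $\ell(\{s\in\dt_n^*: h(s)\le u\}) $ can be bounded using the explicit law of $\dt_n$, one gets
\[
 \Espp\left[\frac{1}{L_n}\int_{\dt_n^*}\ell(\bra{\emptyset}{s})^{-\alpha}\ell(ds)\right] \le C\left(1 + \Espp\!\left[\hrac^{1-\alpha}\right] + \text{lower-order}\right),
\]
and $\Espp[\hrac^{1-\alpha}]<\infty$ precisely because $1-\alpha\in(-1,0)$, which is where the hypothesis $\alpha<2$ enters (and $\alpha>1$ is what makes $\theta(s)^\alpha$ integrable against $\ell$ near the first branch point while $\alpha<2$ keeps $\Es[\hrac^{1-\alpha}]$ finite). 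One then removes the conditioning on $\dt_n$ by taking expectations.

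The main obstacle I expect is the geometric estimate on $L_n^{-1}\int_{\dt_n^*}\ell(\bra{\emptyset}{s})^{-\alpha}\ell(ds)$: one must correctly account for the fact that many edges of $\dt_n$ can be attached very close to $\xrac$, so the integrand $\ell(\bra{\emptyset}{s})^{-\alpha}$ is large on a set whose $\ell$-measure is not negligible. The clean way around this is to use the recursive/stick-breaking description of $\dt_n$ together with the moment formula \eqref{eq:MomHn} applied not just to $\hrac$ but to the heights of all branch points, or equivalently to compare $\int_{\dt_n^*}\ell(\bra{\emptyset}{s})^{-\alpha}\ell(ds)$ with the analogous integral $\int_0^{L_n} (\text{something})\,du$ for the linear record process and invoke the explicit density \eqref{Chi2n} of $L_n$; everything then comes down to checking that the resulting one-dimensional integrals converge, which they do exactly in the range $1<\alpha<2$.
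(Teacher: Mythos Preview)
Your first reduction is exactly the paper's: after conditioning on $\dt_n$ and using that $\theta(s)$ is exponential with parameter $h(s)=\ell(\bra{\emptyset}{s})$, the problem becomes the purely geometric one of bounding
\[
\Espp\!\left[\frac{1}{L_n}\int_{\dt_n^*} h(s)^{-\alpha}\,\ell(ds)\right]
\]
uniformly in $n$. From that point on, however, your plan has a genuine gap. The displayed bound you propose,
\[
\Espp\!\left[\frac{1}{L_n}\int_{\dt_n^*} h(s)^{-\alpha}\,\ell(ds)\right]\le C\bigl(1+\Espp[\hrac^{\,1-\alpha}]+\cdots\bigr),
\]
does \emph{not} give a uniform bound: by \eqref{eq:MomHn} one has $\Espp[\hrac^{\,1-\alpha}]\sim c\,n^{(\alpha-1)/2}\to\infty$ for $\alpha>1$. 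You have established finiteness for each fixed $n$, not boundedness of the sequence. The heuristic ``$\int_{\hrac}^1 u^{-\alpha}du\le C\hrac^{1-\alpha}$'' ignores exactly the obstacle you yourself flag in the last paragraph: the number of branches of $\dt_n^*$ crossing a given low level is of order $n$, and this multiplicity is what makes the naive radial bound blow up. Your proposed fixes (stick-breaking, applying \eqref{eq:MomHn} to all branch-point heights, comparing to a linear process) are not fleshed out, and it is not clear that any of them yields a bound independent of $n$ without essentially reproducing the paper's idea.

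The paper circumvents the geometric estimate entirely. It observes that a point chosen according to $\ell(ds)/L_n^*$ on $\dt_n^*$ has the same law as the projection onto $\dt_n^*$ of a leaf chosen according to $\mas$ on $\ct\setminus\ce_n$; this converts the $\ell$-integral into an $\mas$-integral of $(d(\emptyset,s)-d(s,\dt_n^*))^{-\alpha}$. Then the re-rooting invariance of the CRT swaps $\emptyset$ and the random leaf $s$, turning $d(s,\dt_n^*)$ into $\hrac$ and $d(\emptyset,s)-d(s,\dt_n^*)$ into $d(\xrac,s)$ for $s$ in the complement of $\ce_n$. Finally self-similarity of the two subtrees above $\xrac$ reduces everything to $\Espp[\int_\ct d(\emptyset,s)^{-\alpha}\,\mas(ds)]$, which is a single Rayleigh moment, finite exactly for $\alpha<2$ and manifestly independent of $n$. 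This re-rooting trick is the missing idea in your plan.
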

\begin{proof}
 The main idea is that the measure $\ell(ds)/L_n$ converges a.s. to the mass
measure $\mas(ds)$, in the sense of weak convergence of probability measures
on $\ct$. Since the function $\theta$ is neither continuous nor bounded on
$\ct$, we cannot use this fact directly, but it will be the inspiration for
the proof. We will compute the first moment of $Z_n=\int_{\dt_n^*}
\theta(s)^\alpha
\ell(ds)/L_n^*$, using the notation $L_n^*=\ell(\dt_n^*)$. Since
$\theta(s)$ is, conditionally on $\ct$, exponentially distributed with parameter
$\ell(\llbracket \emptyset,s\rrbracket)$, we get
\begin{align}
 \Espp[Z_n] & = \Espp\left[ \int_{\dt_n^*} \ell(\llbracket
\emptyset,s\rrbracket)^{-\alpha}\ \frac{\ell(ds)}{L_n^*} \right] \nonumber \\
 & = \Espp \left[ \int_{\ct} (d(\emptyset,s) -
d(s,\dt_n^*))^{-\alpha}\ind{\ct\setminus \mathcal{E}_n}(s)\ \mas(ds) \right],
\nonumber \end{align}
where $d(s,\dt_n^*)$ is the distance from the leaf $s$ to the closed
subtree $\dt_n^*$ of $\ct$. The last equality comes from the fact that if $s$ is
a leaf of $\ct$ selected uniformly (according to $\mas(ds)$) among all leaves of
$\ct\setminus\mathcal{E}_n$, then its projection $\pi(s,\dt_n)$ on
$\dt_n$ is uniformly distributed (according to length measure) on $\dt_n^*$. We
will rewrite the last expression so as to make the leaves
$\emptyset,\lf_1,\dots,\lf_n$ apparent. The
set $\ct\setminus\mathcal{E}_n$ can be written as 
\begin{equation}\label{CompEn}
 \ct\setminus\mathcal{E}_n = \{s\in \ct,\ \llbracket
\emptyset,\pi(\emptyset,\dt_n^*) \rrbracket \cap \llbracket
s,\pi(s,\dt_n^*) \rrbracket =\emptyset\},
\end{equation}
since $\pi(\emptyset,\dt_n^*)=\xrac$. Note that $\dt_n^*$ is actually the
subtree spanned by the $n$ leaves $\lf_1,\dots,\lf_n$ and that its definition
does not depend on $\emptyset$ or on $s$.

We then apply the fundamental \emph{re-rooting invariance} of the
Brownian CRT, which implies, in this context, that when re-rooting $\ct$ at
$s$, the re-rooted tree $\ct^s$ is distributed as a CRT, and the sequence
$(\emptyset,\lf_1,\dots,\lf_n)$ is distributed as a sample of $n+1$ uniform
leaves in $\ct^s$. Thus, 
\begin{align*}
 \Espp[Z_n] & = \Espp \left[ \int_{\ct} (d(\emptyset,s) -
d(s,\dt_n^*))^{-\alpha}\ind{\{\llbracket
\emptyset,\pi(\emptyset,\dt_n^*) \rrbracket \cap \llbracket
s,\pi(s,\dt_n^*) \rrbracket =\emptyset\}}(s)\ \mas(ds) \right] \\
 & = \Espp\left[\int_{\ct}(d(\emptyset,s) -
\hrac)^{-\alpha}\ind{\{\llbracket
\emptyset,\pi(\emptyset,\dt_n^*) \rrbracket \cap \llbracket
s,\pi(s,\dt_n^*) \rrbracket =\emptyset\}}(s)\ \mas(ds) \right], \nonumber \\
\end{align*}
since in the re-rooting, $d(s,\dt_n^*)$ becomes
$d(\emptyset,\dt_n^*)=h_{\emptyset,n}$. Therefore, we get, using \eqref{CompEn}
again,
\begin{align*}
 \Espp[Z_n] & =  \Espp\left[\int_{\ct}(d(\emptyset,s) -
\hrac)^{-\alpha}\ind{\ct\setminus \mathcal{E}_n}(s)\ \mas(ds) \right] \\ 
& = \Espp\left[ \int_{\ct_n^{(1)} \cup \ct_n^{(2)}} d(\xrac,s)^{-\alpha}
\mas(ds) \right], 
\end{align*}
where $\ct_n^{(1)}$ and $\ct_n^{(2)}$ are the
connected components of $\ct\setminus(\ce_n\cup\{\xrac\})$, joined together by
their common root $\xrac$. We can now use the self-similarity property of the
fragmentation at heights of the Brownian CRT (see \cite{Bertoin2002}) which
shows that, conditionally on $\sigma_n^{(1)}=\mas(\ct_n^{(1)})$ and
$\sigma_n^{(2)}=\mas(\ct_n^{(2)})$, the trees $\ct_n^{(1)}$ and $\ct_n^{(2)}$
are rescaled copies of the Brownian CRT. Thus, 
\begin{align*}
 \Espp[Z_n] & = \Espp\left[ \int_{\ct_n^{(1)}}
d(\xrac,s)^{-\alpha} \mas(ds) \right] 
+ \Espp\left[ \int_{\ct_n^{(2)}} d(\xrac,s)^{-\alpha} \mas(ds) \right] \\
 & = \Espp\left[ \int_{\ct_n^{(1)}} (\sigma_n^{(1)})^{-\alpha/2}
\left(\frac{d(\xrac,s)}{(\sigma_n^{(1)})^{1/2}}\right)^{-\alpha}
\ \sigma_n^{(1)} \frac{\mas(ds)}{\sigma_n^{(1)}} \right] \\
 & \quad + \Espp\left[ \int_{\ct_n^{(2)}} (\sigma_n^{(2)})^{-\alpha/2}
\left(\frac{d(\xrac,s)}{(\sigma_n^{(2)})^{1/2}}\right)^{-\alpha}
\ \sigma_n^{(2)} \frac{\mas(ds)}{\sigma_n^{(2)}} \right] \\
 & = \Espp\left[
\left((\sigma_n^{(1)})^{1-\alpha/2}+(\sigma_n^{(2)})^{1-\alpha/2} \right)
\int_{\ct} d(\emptyset,s)^{-\alpha} \mas(ds) \right],
\end{align*}
using the scaling invariance of the Brownian CRT. Then, as
$0<1-\alpha/2$, we can simply dominate $(\sigma_n^{(1)})^{1-\alpha/2}$ and
$(\sigma_n^{(1)})^{1-\alpha/2}$ by 1 to get that
\begin{equation*}
\Espp[Z_n] \le 2\cdot \Espp\left[\int_\ct
d(\emptyset,s)^{-\alpha}\mas(ds)\right]. 
\end{equation*}
Now, since $d(\emptyset,s)$ is Rayleigh-distributed
under $\Espp$, we easily see that it has moments of order $-\alpha$ for any
$\alpha<2$, which shows that $\Espp[Z_n]$ is indeed bounded, ending our proof.
\end{proof}

We can now turn to the proof of Proposition \ref{prop:ConvMartFerm}. 

\begin{proof}[Proof of Proposition \ref{prop:ConvMartFerm}] Let
$M_n=\Espp[\Theta|\cf_n]$. We will use the fact that 
\begin{equation}\label{6*}
 n^{1/4}\left(\Theta-\Espp[\Theta|\cf_n]\right) = n^{1/4} \sum_{k=n+1}^\infty
\Espp[M_k-M_{k-1}|\cf_{k-1}].
\end{equation}
Let $\epsilon>0$ be small enough, and consider the events
\[
 E_k^1 = \{ L_{k} \ge k^{1/2-\epsilon} \}\quad;\quad E_k^2 = \{
k^{-2} \le h_{\emptyset,k} \le 1/2 \},\quad k\ge 1.
\]
Recalling that $L_k^2$ is distributed as the sum of $k$ independent exponential
random variables with parameter 1, a simple application of Chernoff's inequality
shows that 
\begin{equation}\label{Ek1}
 \Pr_\infty\left(E_k^1\right) \ge 1-k^{-\epsilon k}.
\end{equation}
For $E_k^2$, we can use the moment estimation \eqref{eq:MomHn} for
$h_{\emptyset,k}$ to find that, for any $0\le \alpha\le 1$, and for any
$\beta>0$, 
\begin{align*} 
 1-\Pr_\infty\left(E_k^2\right) & = \Pr_\infty\left( \{ h_{\emptyset,k} >1/2 \}
\cup \{ h_{\emptyset,k} < k^{-2} \} \right) \\
 & \le \Pr_\infty(h_{\emptyset,k}>1/2) + \Pr_\infty\left( h_{\emptyset,k}^{-1}
\ge k^2\right) \\
 & \le 2^\beta \Espp\left[h_{\emptyset,k}^\beta\right] + k^{-2\alpha}
\Espp\left[h_{\emptyset,k}^{-\alpha}\right] \\
 & \sim C\cdot k^{-\beta/2} + C'\cdot k^{-2\alpha} k^{\alpha/2}.
\end{align*}
Hence, by taking $\alpha =1-\eta$ (and $\beta>3\alpha$), we get, for
sufficiently large $k$,
\begin{equation}\label{Ek2}
 \Pr_\infty\left(E_k^2\right) \ge 1-k^{-3/2+3/2\eta}.
\end{equation}
Thus, combining equations \eqref{Ek1} and \eqref{Ek2}, we get that 
\begin{equation*}
 \sum_{k\ge 1} \Pr_\infty\left( \left(E_k^1\right)^c \cup \left(E_k^2\right)^c
\right) \le \sum_{k\ge 1}
\Pr_\infty\left((E_k^1)^c\right) +
\Pr_\infty\left((E_k^2)^c\right) < \infty.
\end{equation*}
Thus, by the Borel-Cantelli lemma, there a.s. exists  $k_0\ge 1$ such that for
$k\ge k_0$, $L_k \ge  k^{1/2-\epsilon}$ and $k^{-2} \le h_{\emptyset,k} \le
1/2$. We will use this truncating events in the following way: since the event
$E_k=E_k^1\cap E_k^2$ is $\cf_k$-measurable, the usual martingale computations
show that 
\begin{equation*}
 \Espp\left[ \left( n^{1/4} \sum_{k=n}^\infty (M_k-M_{k-1}) \ind{E_{k-1}}
\right)^2 \right]  = n^{1/2} \sum_{k=n}^\infty \Espp[ (M_k-M_{k-1})^2
\ind{E_{k-1}}].
\end{equation*}
We will now give precise estimations of $\Espp[(M_k-M_{k-1})^2|\cf_{k-1}]$
using the disintegration formula from Lemma \ref{Desintegration}. By
definition, for all $k\ge 1$, we can write 
\[ \Theta=\int_\ct
\theta(s)\ \mas(ds)=\sum_{i\in I_k} \Theta_i^{(k)}. \]
Then, 
\begin{align*}
 M_k & = \Espp[\Theta|\cf_k] \\
 & = \Espp\Bigg[ \Big. \sum_{i\in I_{k-1}} \Theta_i^{(k-1)} \Big|
\cf_k\Bigg] \\
 & = \Espp[ \Theta_{i_k} |\cf_k] + \Espp\Bigg[\sum_{i\in
I_{k-1}\setminus \{i_k\}} \Theta_i \Big|\cf_k\Bigg],
\end{align*}
where $i_k$ is the unique index in $I_{k-1}$ such that $\mathrm{x}_k\in
\ct_{i_k}$. We then define:
\begin{align*}
 G_k & = \Espp\left[\Theta_{i_k}^{(k-1)}|\cf_k\right] \\
 H_k & = \Espp\Bigg[\sum_{i\in I_{k-1}\setminus \{i_k\}}
\Theta_i^{(k-1)}\Big|\cf_k\Bigg] -
\Espp\Bigg[ \sum_{i\in I_{k-1}} \Theta_i^{(k-1)} \Big|\cf_{k-1}\Bigg],
\end{align*} 
so that we have $  M_k-M_{k-1} = G_k + H_k$ and 
\begin{equation*}
 \Espp\left[(M_k-M_{k-1})^2 | \cf_{k-1}\right] \le 2
\Espp\left[G_k^2|\cf_{k-1}\right] +
2\Espp\left[H_k^2|\cf_{k-1}\right].
\end{equation*}
As far as $G_k$ is concerned, conditionally on $\cf_k$,
$\Theta_{i_k}^{(k-1)}$ can be written as
$\sum_{i\in I_k} \Theta_i^{(k)} \ind{\{s_i\in \cb_k\}}$, so
that we can use the disintegration formula of Lemma \ref{Desintegration} to
get:
\begin{equation*}
 G_k = \frac{1}{\sqrt{2\pi}} \int_0^1
\frac{\expp{-L_k^2v/(2-2v)}}{v^{3/2}(1-v)^{3/2}}\ dv \int_{\cb_k}
\Es_{\theta(s)}^{(v)}[\Theta]\ \ell(ds).
\end{equation*}
Hence, using this expression, we can now compute:
\begin{align*}
 \Espp[G_k^2 | \cf_{k-1}] & =\Espp\left[ \left(\frac{1}{\sqrt{2\pi}} \int_0^1
\frac{\expp{-L_k^2v/(2-2v)}}{v^{3/2}(1-v)^{3/2}}\ dv \int_{\cb_k}
\Es_{\theta(s)}^{(v)}[\Theta]\ \ell(ds)\right)^2\Big|\cf_{k-1}\right] \\
 & \le \Espp\left[\left(\frac{1}{\sqrt{2\pi}} \int_0^1
\frac{\expp{-L_k^2v/(2-2v)}}{v^{1/2}(1-v)^{3/2}}\ dv \int_{\cb_k} \ell(ds)
\theta(s)\right)^2\Big|\cf_{k-1}\right],
\end{align*}
since $\Es_{\theta(s)}^{(v)}[\Theta]\le v\theta(s)$ (Lemma \ref{lem:majoHqr}).
Now, the measure
\[  \frac{L_n\expp{-L_n^2v/(2-2v)}}{\sqrt{2\pi}v^{1/2}(1-v)^{3/2}}dv \]
is a probability density on [0,1] (cf. \eqref{eq:DensMas}), so that we get,
using the fact that $L_{k-1}<L_k$, 
\begin{align*}
 \Espp\left[G_k^2 | \cf_{k-1}\right] & \le \Espp\left[
\left(\frac{1}{L_k}
\int_{\cb_k}
\ell(ds) \theta(s)\right)^2\Big|\cf_{k-1}\right] \\
 & \le \frac{1}{L_{k-1}^2} \Espp\left[ \left(\int_{\cb_k}
\ell(ds) \theta(s)\right)^2\Big|\cf_{k-1}\right].
\end{align*}

Now, conditionally on $\cf_{k-1}$, the record process on $\cb_k$ has the
distribution of an independent record process on $\R_+$, started from
$\theta(s_k)$, stopped at time $\ell(\cb_k)$. Furthermore, it is a
consequence from the stickbreaking construction of Aldous (see
\cite{Aldous1991a}) that, conditionally on $\cf_{k-1}$, the random variables
$s_k$ and $\ell(\cb_k)$ are independent. Furthermore, $s_k$ is distributed
uniformly on $\dt_{k-1}$, and $\ell(\cb_k)$ can be expressed as the length of
the interval between the $(k-1)$th and the $k$th jump of a Poisson process
with intensity $t \ind{[0,\infty)}(t)dt$. Therefore, conditionally on
$\cf_{k-1}$, $\ell(\cb_k)$ has density
\begin{equation} \label{eq:DensLBk}
 r_{L_{k-1}}(dx) = (L_{k-1}+x) \expp{-x^2/2-L_{k-1}x}\ dx.
\end{equation}
Thus, using the notation $F(q,t)= \Esp_q[(\int_0^t \theta(s) ds)^2]$ for
$0<q<\infty$ and $t\ge 0$, we get
\begin{equation*}
 \Espp\left[G_k^2|\cf_{k-1}\right] \le \frac{1}{L_{k-1}^2} \int_{\dt_{k-1}}
\frac{\ell(ds)}{L_{k-1}} \int_0^\infty r_{L_{k-1}}(dx) F(\theta(s),x). 
\end{equation*}
We will cut the integral in two parts, according
to  $\dt_{k-1} = \dt_{k-1}^* \cup(\dt_{k-1} \setminus
\dt_{k-1}^*)$. We then use Lemma \ref{lem:Fql} to dominate $F(\theta(s),x)$:
inequality \eqref{Fql1} for $s\in \dt_{k-1}^*$ and \eqref{Fql2} for $s\in
\dt_{k-1} \setminus \dt_{k-1}^*$. This leads to:
\begin{align*}
 \Espp\left[G_k^2|\cf_{k-1}\right] & \le \frac{1}{L_{k-1}^2}
\int_{\dt_{k-1}^*}
 \frac{\ell(ds)}{L_{k-1}} \int_0^\infty r_{L_{k-1}}(dx) \left(C_1
\theta(s)^{3/2}x^{3/2} + C_2 \theta(s) x^2\right) \nonumber\\
\rightalign{+\frac{1}{L_{k-1}^2} \int_{\dt_{k-1} \setminus \dt_{k-1}^*}
 \frac{\ell(ds)}{L_{k-1}} \int_0^\infty r_{L_{k-1}}(dx) \left(C_3
\theta(s)^{1/2}x^{1/2} + C_4 \theta(s)^{-1/2} x^{1/2}\right)} 
& = \frac{C_1}{L_{k-1}^2} \left(\int_0^\infty r_{L_{k-1}}(dx)
x^{3/2}\right)\left(\int_{\dt_{k-1}^*} \frac{\ell(ds)}{L_{k-1}}
\theta(s)^{3/2}\right) 
\\
& + \frac{C_2}{L_{k-1}^2} \left(\int_0^\infty r_{L_{k-1}}(dx)
x^{2}\right)\left(\int_{\dt_{k-1}^*} \frac{\ell(ds)}{L_{k-1}} \theta(s)\right)
\\
& + \frac{C_3}{L_{k-1}^3} \left(\int_0^\infty r_{L_{k-1}}(dx)
x^{1/2}\right)\left(\int_{\dt_{k-1} \setminus \dt_{k-1}^*}
\ell(ds) \theta(s)^{1/2}\right)\\
& + \frac{C_4}{L_{k-1}^3} \left(\int_0^\infty r_{L_{k-1}}(dx)
x^{1/2}\right)\left(\int_{\dt_{k-1} \setminus \dt_{k-1}^*}
\ell(ds) \theta(s)^{-1/2}\right).
\end{align*}
We can then compute, using Lemma \ref{Momentsr} for the asymptotic moments of
$r_{L_{k-1}}(dx)$: 
\begin{align}
 \Espp\left[ G_k^2 \ind{E_{k-1}} \right] & =
\Espp\left[\Espp\left[G_k^2|\cf_{k-1}\right] \ind{E_{k-1}}\right]
\nonumber  \\ 
& \le \Espp\left[ \int_{\dt_{k-1}^*} \frac{\ell(ds)}{L_{k-1}} \theta(s)^{3/2}
\right] \cdot O(k^{-7/4 + 7/2\epsilon}) \label{Ak1} \\ 
& + \Espp\left[\left(\int_{\dt_{k-1}^*} \frac{\ell(ds)}{L_{k-1}}
\theta(s)\right) \ind{E_{k-1}} \right] \cdot O(k^{-2+4\epsilon}) \label{Ak2} \\
& + \Espp\left[\left(\int_{\dt_{k-1} \setminus \dt_{k-1}^*}
\ell(ds) \theta(s)^{1/2}\right) \right]\cdot
O(k^{-7/4+7/2\epsilon}) \label{Ak3}\\
& + \Espp\left[\left(\int_{\dt_{k-1} \setminus \dt_{k-1}^*}
\ell(ds) \theta(s)^{-1/2}\right) \right] \cdot
O(k^{-7/4+7/2\epsilon}).\label{Ak4}
\end{align}
Using Lemma \ref{Theta3/2}, we see that \eqref{Ak1} is indeed of the order
$k^{-7/4+7/2\epsilon}$. As far as \eqref{Ak2} is concerned, we will show the
following lemma, which will be useful later on, and
which implies in particular that \eqref{Ak2} is of the order
$k^{-2+4\epsilon}$.

\begin{lemma}\label{lem:BorneL2}
 $\Espp\left[\left(\int_{\dt_{k-1}^*} \theta(s)
\ell(ds)/L_{k-1}\right)^2\ind{E_{k-1}}\right]$ is bounded as $k\to \infty$. 
\end{lemma}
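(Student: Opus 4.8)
The plan is to avoid computing the second moment of $\int_{\dt_{k-1}^*}\theta(s)\,\ell(ds)/L_{k-1}$ directly, and instead read the bound off from the a priori comparison \eqref{InegVn}. Put $M_n=\Espp[\Theta|\cf_n]$. Since $\theta\ge 0$, the left-hand inequality in \eqref{InegVn} gives the pointwise two-sided estimate
\[
0\ \le\ \frac{1}{L_n}\int_{\dt_n^*}\theta(s)\,\ell(ds)\ \le\ M_n+R_n,\qquad R_n=\tfrac14\,\expp{-L_n^2/4}\,\theta(h_{\emptyset,n})^2,
\]
and hence $\bigl(\tfrac{1}{L_n}\int_{\dt_n^*}\theta(s)\,\ell(ds)\bigr)^2\le 2M_n^2+2R_n^2$. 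Taking $n=k-1$, multiplying by $\ind{E_{k-1}}$ and then taking expectations, it suffices to show that $\sup_n\Espp[M_n^2]<\infty$ and that $\Espp\bigl[R_{k-1}^2\,\ind{E_{k-1}}\bigr]$ stays bounded as $k\to\infty$.

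The first point is immediate: by conditional Jensen $M_n^2\le\Espp[\Theta^2|\cf_n]$, hence $\Espp[M_n^2]\le\Espp[\Theta^2]<\infty$ since $\Theta$ is Rayleigh-distributed under $\Pr_\infty$. The real content of the lemma is the control of $R_n$, and this is exactly where the truncating events cannot be dropped: conditionally on $\dt_n$, the variable $\theta(h_{\emptyset,n})$ is exponential with parameter $h_{\emptyset,n}=d(\emptyset,\xrac)$, so $\Espp[\theta(h_{\emptyset,n})^4|\dt_n]=24\,h_{\emptyset,n}^{-4}$, and by \eqref{eq:MomHn} the negative moment $\Espp[h_{\emptyset,n}^{-4}]$ is infinite, so one cannot hope to bound $\Espp[R_n^2]$ without restricting the underlying probability space. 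On $E_{k-1}=E_{k-1}^1\cap E_{k-1}^2$, however, one has $L_{k-1}^2\ge(k-1)^{1-2\epsilon}$ and $h_{\emptyset,k-1}\ge(k-1)^{-2}$; since the indicators of $E_{k-1}^1$ and $E_{k-1}^2$ are $\dt_{k-1}$-measurable, conditioning on $\dt_{k-1}$ and using the exponential law above yields
\[
\Espp\bigl[R_{k-1}^2\,\ind{E_{k-1}}\bigr]\ \le\ \tfrac{1}{16}\,\expp{-(k-1)^{1-2\epsilon}/2}\;\Espp\bigl[24\,h_{\emptyset,k-1}^{-4}\,\ind{E_{k-1}^2}\bigr]\ \le\ \tfrac{3}{2}\,(k-1)^{8}\,\expp{-(k-1)^{1-2\epsilon}/2},
\]
which tends to $0$ as $k\to\infty$ when $\epsilon<1/2$, and in particular is bounded. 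Combining the two estimates, $\Espp\bigl[(\int_{\dt_{k-1}^*}\theta(s)\,\ell(ds)/L_{k-1})^2\,\ind{E_{k-1}}\bigr]\le 2\,\Espp[\Theta^2]+3(k-1)^{8}\expp{-(k-1)^{1-2\epsilon}/2}$, which is bounded in $k$, and the lemma follows.

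The only genuine obstacle is conceptual rather than computational: one must notice that the ``remainder'' term $R_n$ carries no moment bound by itself, because the factor $h_{\emptyset,n}^{-4}$ has infinite expectation, so the estimate can only be expected on the truncating event $E_{k-1}$, where the super-exponential decay of $\expp{-L_{k-1}^2/4}$ overwhelms the polynomial blow-up of $h_{\emptyset,k-1}^{-4}$. A more self-contained alternative would be to expand the square as $\int\!\!\int_{\dt_{k-1}^*\times\dt_{k-1}^*}\Espp[\theta(s)\theta(s')|\ct]\,\ell(ds)\,\ell(ds')$, to bound $\Espp[\theta(s)\theta(s')|\ct]\le 2\,d(\emptyset,s)^{-1}d(\emptyset,s')^{-1}$ by Cauchy--Schwarz and the conditional exponential laws, and then to rerun the re-rooting and fragmentation-at-heights scaling argument of Lemma \ref{Theta3/2}; this works, but is longer and still requires a truncation near $\xrac$, so I would keep the proof based on \eqref{InegVn}.
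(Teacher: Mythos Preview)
Your proof is correct and follows the same core strategy as the paper: invoke the comparison \eqref{InegVn} to replace the quantity of interest by the closed martingale $M_{k-1}=\Espp[\Theta|\cf_{k-1}]$ plus a remainder, use the $L^2$-boundedness of $M_{k-1}$ (via $\Espp[\Theta^2]<\infty$), and kill the remainder on the truncating event $E_{k-1}$.

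There is one genuine simplification in your version worth noting. The paper bounds the \emph{difference} $\frac{1}{L_{k-1}}\int_{\dt_{k-1}^*}\theta\,\ell(ds)-M_{k-1}$ two-sidedly by $R_{k-1}\vee V_{k-1}$ and therefore has to control both $\Espp[R_{k-1}^2\ind{E_{k-1}}]$ and $\Espp[V_{k-1}^2\ind{E_{k-1}}]$, the latter via the earlier estimate \eqref{eq:BorneVn}. You instead exploit the nonnegativity of $\int_{\dt_{k-1}^*}\theta\,\ell(ds)$ to get a one-sided bound $0\le \frac{1}{L_{k-1}}\int\theta\,\ell(ds)\le M_{k-1}+R_{k-1}$, which eliminates $V_{k-1}$ from the argument entirely. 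This is neater. Your control of $\Espp[R_{k-1}^2\ind{E_{k-1}}]$ is also slightly different: you use the deterministic bound $L_{k-1}^2\ge (k-1)^{1-2\epsilon}$ on $E_{k-1}^1$ rather than the paper's $\Espp[\exp(-L_{k-1}^2/2)]$, but both work and yield the same conclusion.
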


\begin{proof}[Proof of Lemma \ref{lem:BorneL2}] Recall \eqref{InegVn}:
\begin{equation}
 -R_{k-1} \le \Espp[\Theta|\cf_{k-1}] - \frac{1}{L_{k-1}} \int_{\dt_{k-1}^*}
\theta(s)\ \ell(ds) \le V_{k-1}. 
\end{equation}
Therefore, we can write
\begin{align*}
 \Espp\left[\left(\int_{\dt_{k-1}^*} \frac{\ell(ds)}{L_{k-1}}
\theta(s)- \Espp[\Theta|\cf_{k-1}]\right)^2 \ind{E_{k-1}} \right] & \le
\Espp\left[ (R_{k-1} \vee V_{k-1})^2\ind{E_{k-1}}\right] \\
&\le \Espp\left[R_{k-1}^2 \ind{E_{k-1}}\right] + \Espp\left[V_{k-1}^2
\ind{E_{k-1}}\right].
\end{align*}
Using \eqref{eq:BorneVn}, we can see that, since $E_{k-1}\in \sigma(\{\dt_n\})$,
\[
 \Espp\left[V_{k-1}^2\ind{E_{k-1}}\right] \le
\Espp\left[\left(C\cdot h_{\emptyset,k-1} +\frac{\sqrt{\pi}}{\sqrt{2}L_{k-1}}
\right)^2\ind{E_{k-1}}\right] \le \Espp\left[(C \cdot h_{\emptyset,k-1} +
\sqrt{\pi/2}/L_{k-1})^2\right].
\]
Hence, as $h_{\emptyset,k-1}$ and $L_{k-1}^{-1}$ are integrable and
decrease to
0 a.s., $\Espp[V_{k-1}^2\ind{E_{k-1}}]$ converges to 0 by monotone convergence.
As for
$\Espp[R_{k-1}^2 \ind{E_{k-1}}]$, we use the fact that, conditionally on
$\dt_{k-1}$, $\theta(h_{\emptyset,{k-1}})$ is exponentially distributed with
parameter $h_{\emptyset,{k-1}}$ to find
\begin{align*}
 \Espp\left[ R_{k-1}^2 \ind{E_{k-1}}\right] & = \Espp\left[ \frac{1}{16}
\expp{-L_{k-1}^2/2} h_{\emptyset,k-1}^{-4} \ind{E_{k-1}}\right] \\
 & \le \frac{1}{16} k^8 \Espp\left[\expp{-L_{k-1}^2/2}\right],
\end{align*}
which easily converges to 0 as $k\to\infty$. Hence, since
$\Espp[\Theta|\cf_{k-1}]$ converges in $L^2$ to $\Theta$, it is of course
$L^2$-bounded, so that $\Espp[(\int_{\dt_{k-1}^*} \theta(s)
\ell(ds)/L_{k-1})^2\ind{E_{k-1}}]$ is indeed bounded as $k\to\infty$, as
announced. \end{proof}

In the two remaining terms \eqref{Ak3} and \eqref{Ak4}, the integral
is taken on a single branch; therefore, we can use the linear case to get
\begin{align*}
 \Espp\left[\left(\int_{\dt_{k-1} \setminus \dt_{k-1}^*}
\ell(ds) \theta(s)^{1/2}\right) \ind{E_{k-1}} \right] & = \Espp\left[
 \Esp_\infty\left[ \int_0^{h_{\emptyset,k-1}} \theta(s)^{1/2} ds
\right] \ind{E_{k-1}}\right] \\
 & = C\cdot \Espp[ h_{\emptyset,k-1}^{1/2} \ind{E_{k-1}} ],
\end{align*}
which easily converges to 0 as $k\to \infty$. A similar argument shows that
\eqref{Ak4} converges to 0 as $\Espp[h_{\emptyset,k-1}^{3/2}\ind{E_{k-1}}]$.
Putting everything together, we find that $\Espp[G_k^2 \ind{E_{k-1}}]$ is of
the order $k^{-7/4+7/2\epsilon}$ as $k\to \infty$, so that the remainder
$\sum_{k=n}^\infty \Espp[G_k^2 \ind{E_{k-1}}]$ is of the order
$n^{-3/4+7/2\epsilon}$. 

Turning to $H_k$, we note that $I_{k-1}\setminus \{i_k\}= \{ i\in I_k,\
s_i\notin \cb_k\}$, so that, using Lemma
\ref{Desintegration}, we get:
\begin{align*}
 H_k = & \Espp\left[\sum_{i\in I_{k}}
\Theta_i^{(k)} \ind{\{s_i\notin \cb_k\}}\Bigg|\cf_k\right]
- \Espp\Bigg[ \sum_{i\in I_{k-1}} \Theta_i^{(k-1)} \Bigg|\cf_{k-1}\Bigg] \\
 = & \int_0^1 \frac{\expp{-L_k^2v/(2-2v)}}{\sqrt{2\pi} v^{3/2}(1-v)^{3/2}}\ dv
 \int_{\dt_k} \ell(ds) \Es_{\theta(s)}^{(v)}[\Theta]\ind{\{s\notin \cb_k\}} \\
 \rightalign{ - \int_0^1 \frac{\expp{-L_{k-1}^2v/(2-2v)}}{\sqrt{2\pi}
v^{3/2}(1-v)^{3/2}}\
dv  \int_{\dt_{k-1}} \ell(ds) \Es_{\theta(s)}^{(v)}[\Theta],} \end{align*}
thus, considering that $\dt_k=\dt_{k-1} \cup (\cb_k\setminus \{s_k\})$, and
that of course $\ell(\{s_k\})=0$,
\begin{align*}
H_k = & \int_0^1 \frac{dv}{\sqrt{2\pi}
v^{3/2}(1-v)^{3/2}}\int_{\dt_{k-1}}
\ell(ds) \Es_{\theta(s)}^{(v)}[\Theta]
\left(\expp{-L_k^2v/(2-2v)}-\expp{-L_{k-1}^2v/(2-2v)}\right) .
\end{align*}
We then use the inequality $|\expp{-a t}-\expp{-a s}|\le a
\expp{-a t}(s-t)$, valid for any $a >0$, and $t\le s$, to find:
\begin{multline}\label{eq:domBk}
 \Espp\left[H_k^2 |\cf_{k-1}\right] \le \left(\frac{1}{\sqrt{2\pi}} \int_0^1
\frac{\expp{-L_{k-1}^2v/(2-2v)}}{v^{3/2}(1-v)^{3/2}}\frac{v}{2-2v}\
dv\int_{\dt_{k-1}}  \Es_{\theta(s)}^{(v)}[\Theta]\ \ell(ds)
\right)^2 \\
\times\Espp\left[(L_k^2-L_{k-1}^2)^2|\cf_{k-1}\right].
\end{multline}
On the one hand, we will use the change of variables
\[ u= L_{k-1}^2 v/(2-2v) \Leftrightarrow v=u/(L_{k-1}^2/2+u). \]
in the integral, which gives:
\begin{equation}\label{BCurs}
 \left(\frac{1}{\sqrt{2\pi}} \int_0^\infty 
\frac{\expp{-u}}{(L_{k-1}^2/2)^{1/2}}
\frac{L_{k-1}^2/2+u}{\sqrt{u}}\ du  \int_{\dt_{k-1}}
\frac{\ell(ds)}{L_{k-1}^2} \Es_{\theta(s)}^{(u/(L_{k-1}^2/2+u))}[\Theta]
\right)^2.
 \end{equation}
We then cut the integral in two parts, according to $\dt_{k-1}=\dt_{k-1}^* \cup
(\dt_{k-1}\setminus \dt_{k-1}^*)$, and we use the simple domination
$\Es_{\theta(s)}^{(v)}[\Theta] \le v \theta(s)$ on $\dt_{k-1}^*$, and the
domination $\Es_{\theta(s)}^{(v)}[\Theta] \le \Es_\infty^{(v)}[\Theta] =
\sqrt{\pi v/2}$ on $\dt_{k-1}\setminus \dt_{k-1}^*$ to get
\begin{multline*}
\eqref{BCurs} \le \left(\frac{1}{\sqrt{\pi}} \int_0^\infty  \frac{du}{L_{k-1}^3}
\frac{ L_{k-1}^2/2+u}{\sqrt{u}} \expp{-u} \int_{\dt_{k-1}^*} \ell(ds)
\theta(s) \frac{u}{ L_{k-1}^2/2+u} \right. \\ \left. + \int_0^\infty 
\frac{du}{L_{k-1}^3} \frac{L_{k-1}^2/2+u}{\sqrt{2u}} \expp{-u} h_{\emptyset,k-1}
\frac{\sqrt{u}}{\sqrt{L_{k-1}^2/2+u}}\right)^2.
\end{multline*}
The integrals can be computed, giving
\[ \eqref{BCurs} \le \left(\frac{1}{2\sqrt{\pi}L_{k-1}^2}
\int_0^\infty \sqrt{u} \expp{-u} \int_{\dt_{k-1}^*} \frac{\ell(ds)}{L_{k-1}}
\theta(s) + \int_0^\infty
\frac{du}{\sqrt{2}L_{k-1}^2}\sqrt{1/2+u/L_{k-1}^2}\expp{-u}
h_{\emptyset,k-1}\right)^2. \]
On the other hand, the term $\Espp[(L_k^2-L_{k-1}^2)^2|\cf_{k-1}]$ appearing
in the domination \eqref{eq:domBk} can be expanded into 
\begin{align*} \Espp\left[\ell(\mathrm{B}_k)^4 |\cf_{k-1}\right] +
4L_{k-1}^2\Espp\left[\ell(\mathrm{B}_k)^2|\cf_{k-1}\right] + 4
L_{k-1}\Espp\left[\ell(\mathrm{B}_k)^3|\cf_{k-1}\right]
\end{align*}
Then, recall the density \eqref{eq:DensLBk} of $\ell(\mathrm{B}_k)$
conditionally on $\cf_{k-1}$. In the proof of Lemma \ref{Momentsr}, we show that
for any $\lambda >0$, we have a.s.
\[
\Espp\left[\ell\left(\mathrm{B}_k\right)^\lambda|\cf_{k-1}\right] =\int
r_{L_{k-1}}(dx) x^\lambda \le C_1\cdot L_{k-1}^{-\lambda} + C_2 \cdot
L_{k-1}^{-\lambda-2}
\]
with $C_1$ and $C_2$ deterministic constants.
Thus, $\Espp[(L_k^2-L_{k-1}^2)^2|\cf_{k-1}]$ is a.s. bounded by $F(L_{k-1})$,
where $F$ is a nonincreasing bounded nonnegative function. In the end, we get 
\begin{align*}
 \Espp\left[H_k^2 \ind{E_{k-1}} \right] & \le \Espp\Bigg[
\Bigg(\frac{C}{L_{k-1}^2}
\int_{\dt_{k-1}^*} \theta(s) \frac{\ell(ds)}{L_{k-1}}  \\
 \rightalign{+ \int_0^\infty \frac{\expp{-u}\
du}{2L_{k-1}^2}\sqrt{1/2+u/L_{k-1}^2}
h_{\emptyset,k} \Bigg)^2 F(L_{k-1}) \ind{E_{k-1}} \Bigg]}
& \le F(k^{2-4\epsilon}) \Bigg(C\cdot k^{-2+4\epsilon} \Espp\left[ \left(
\int_{\dt_{k-1}^*} \theta(s) \frac{\ell(ds)}{L_{k-1}} \right)^2  \right]
\\
 \rightalign{ + C'\cdot k^{-2+4\epsilon} \left(\int_0^\infty \expp{-u}
\sqrt{1/2+u/k^{1-2\epsilon}}\right)^2 \Espp[h_{\emptyset,k}^2]\Bigg). } 
\end{align*}
Hence, using the fact that $\int_{\dt_{k-1}^*} \theta(s) \ell(ds)/L_{k-1}$ is
bounded in $L^2$ (which is precisely Lemma \ref{lem:BorneL2}), we find that
$\Espp[H_k^2
\ind{E_{k-1}}]=O(k^{-2+4\epsilon})$. Putting this together with the estimate on
$\Espp[G_k^2 \ind{E_{k-1}}]$, we get that $\Espp[(M_k-M_{k-1})^2
\ind{E_{k-1}}]=O(k^{-7/4+7/2\epsilon})$. If $\epsilon<1/14$,
\[ \Espp[(M_k-M_{k-1})^2 \ind{E_{k-1}}] = O(k^{-7/4+7/2\epsilon}) =
o(k^{-3/2}).
\]
Hence, we get
\[
\lim_{n\to\infty} n^{1/2} \sum_{k=n}^\infty \Espp\left[(M_k-M_{k-1})^2
\ind{E_{k-1}}\right]=0. 
\]
This shows that the random sequence $n^{1/4} \sum_{k=n}^\infty (M_k-M_{k-1})
\ind{E_{k-1}}$ converges to 0 in $L^2$, hence in probability. But, since 
there a.s. exists $k_0\ge 1$ such that $\ind{E_{k}}=1$ for all $k\ge k_0$, the
sequence $n^{1/4} \sum_{k=n}^\infty (M_k-M_{k-1})$ also converges to 0 in
probability, which is what we wanted to prove.
\end{proof}

\section{Proof of the main theorem}

We can now turn to the proof of the actual convergence towards a nontrivial
limit, in the asymptotic $n^{1/4}$. The main idea is to apply the Martingale
Central Limit Theorem (Corollary 3.1 in \cite{Hall1980}) to 
\[ M_n^* = X_n^* -\int_{\dt_n^*} \theta(s)\ \ell(ds). \]
We recall this theorem below for convenience:
\begin{nontheorem}[Hall, Heyde \cite{Hall1980}]
 Let $(M_n,\ n\ge 1)$ be a zero-mean square-integrable $(\cg_n)$-martingale,
and let $\eta^2$ be an a.s. finite random variable. Suppose that, for some
sequence $a_n$ increasing to $+\infty$, we have
\begin{enumerate}
 \item \emph{(Asymptotic smallness)} For all $\epsilon >0$, we have the
convergence in probability
\[ 
 \lim_{n\to \infty} a_n^{-2} \sum_{k=1}^n \Esp \left[\left.(M_k-M_{k-1})^2
\ind{\{ |M_k-M_{k-1}|> \epsilon a_k \}} \right| \cg_{k-1}\right] = 0 
\]
 \item \emph{(Convergence of the conditional variance)} We have the convergence
in probability
\[
 \lim_{n\to\infty} a_n^{-2} \sum_{k=1}^n \Esp\left[(M_k-M_{k-1})^2 |
\cg_{k-1}\right] = \eta^2.
\]
\end{enumerate}
 Then, the sequence $(a_n^{-1}M_n,\ n\ge 1)$ converges in distribution to a
random variable $Z$ with characteristic function $\Esp[\exp(-\eta^2t^2/2)]$.
\end{nontheorem}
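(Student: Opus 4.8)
The plan is to prove convergence of characteristic functions, in fact the stronger \emph{stable} convergence: fix $t\in\R$, set $S_n=a_n^{-1}M_n$, $X_{nk}=a_n^{-1}(M_k-M_{k-1})$ and $v_{nk}=\Esp[X_{nk}^2\mid\cg_{k-1}]$, and show that $\Esp[Y\,e^{itS_n}]\to\Esp[Y\,e^{-t^2\eta^2/2}]$ for every bounded $\cg_\infty$-measurable $Y$. Taking $Y\equiv1$ and invoking L\'evy's continuity theorem — the map $t\mapsto\Esp[e^{-t^2\eta^2/2}]$ is the characteristic function of the displayed mixture of centred Gaussians, hence continuous at $0$ — then gives the theorem. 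Note $\{X_{nk},\cg_k\}_{k\le n}$ is a martingale-difference array, $\sum_{k\le n}v_{nk}\to\eta^2$ in probability by (2), and $\eta^2$ is $\cg_\infty$-measurable, being an in-probability limit of $\cg_{k-1}$-measurable quantities.

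Two standard reductions come first. Since condition (1) forces $\max_{k\le n}|X_{nk}|\to0$ in probability, I would truncate the increments at a small level $\delta=\delta(t)$, re-centring to keep the martingale property: the truncated martingale has the same asymptotics, its conditional variances still sum to $\eta^2$ but are now all bounded by $\delta^2$, and for $\delta$ small relative to $t$ the conditional characteristic functions $\xi_{nk}:=\Esp[e^{itX_{nk}}\mid\cg_{k-1}]$ stay uniformly bounded away from $0$. Next, because $\eta^2$ may be unbounded, so that the exponential moments used below are unavailable globally, I truncate the conditional-variance clock: for a continuity point $\lambda$ of the law of $\eta^2$ let $\tau=\tau_n$ be the $(\cg_k)$-stopping time at which $\sum_{j\le k}v_{nj}$ first exceeds $\lambda$, capped at $n$, and replace $M$ by $M_{\cdot\wedge\tau}$; the stopped clock is $\le\lambda$ identically, converges in probability to $\eta^2\wedge\lambda$ (using $\max_k v_{nk}\to0$), and $\Pr(M_{\cdot\wedge\tau}\ne M)\le\Pr(\sum_{k\le n}v_{nk}>\lambda)\to\Pr(\eta^2\ge\lambda)$, which tends to $0$ as $\lambda\to\infty$. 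Thus it suffices to prove the stable convergence for each fixed $\lambda$ and let $\lambda\to\infty$; so henceforth $\sum_{k\le n}v_{nk}\le\lambda$ and $\eta^2\le\lambda$.

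The engine is the complex exponential martingale: with $P_n=\prod_{k\le n}\xi_{nk}$, the process $R_{nk}=e^{itS_k}\big/\prod_{j\le k}\xi_{nj}$ is a $(\cg_k)$-martingale (a one-line conditioning computation), so $\Esp[R_{nn}\mid\cg_m]=R_{nm}$ for $n\ge m$ and $\Esp[R_{nn}]=1$. Two facts then drive the argument. (a) The Taylor expansion $\xi_{nk}=1-t^2v_{nk}/2+r_{nk}$ with $\sum_k|r_{nk}|\to0$ in probability — split the remainder estimate over $\{|X_{nk}|\le\epsilon\}$, where it is $\lesssim|t|^3\epsilon\sum_k v_{nk}\le|t|^3\epsilon\lambda$, and $\{|X_{nk}|>\epsilon\}$, where it vanishes by (1) — combined with $\sum_k v_{nk}\to\eta^2$, $\max_k v_{nk}\le\delta^2$, and $\log(1+z)=z+O(|z|^2)$, yields $\log P_n=\sum_k\log\xi_{nk}\to-t^2\eta^2/2$ in probability, so $P_n\to e^{-t^2\eta^2/2}$ in probability; the same bounds give $\operatorname{Re}\xi_{nk}\ge1-b_{nk}$ with $\sum_k b_{nk}\le C_t\lambda$, whence $|P_n|\ge e^{-C_t\lambda}$ off an event of vanishing probability, so $1/P_n$, and hence $R_{nn}$, is essentially bounded. (b) Here the \emph{fixed} filtration is decisive: for fixed $m$ and bounded $\cg_m$-measurable $Z$, the tower property gives $\Esp[R_{nn}Z]=\Esp[R_{nm}Z]$, and $R_{nm}=e^{ita_n^{-1}M_m}\big/\prod_{j\le m}\xi_{nj}\to1$ as $n\to\infty$, boundedly (each $\xi_{nj}\to1$ and $a_n^{-1}M_m\to0$ since $a_n\to\infty$, and the finite product $\prod_{j\le m}\xi_{nj}$ is bounded below uniformly in $n$ by the first reduction), so $\Esp[R_{nn}Z]\to\Esp[Z]$.

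To finish, write $e^{itS_n}=R_{nn}P_n$, put $e:=e^{-t^2\eta^2/2}\in[0,1]$, and for bounded $\cg_\infty$-measurable $Y$ decompose $\Esp[Y e^{itS_n}]=\Esp[YR_{nn}(P_n-e)]+\Esp[YR_{nn}e]$. The first term tends to $0$ by bounded convergence ($R_{nn}$ essentially bounded, $P_n\to e$ in probability). For the second, approximate $Ye$ in $L^1$ by $Z_m:=\Esp[Ye\mid\cg_m]$, which is bounded and $\cg_m$-measurable with $\Esp[Z_m]=\Esp[Ye]$: then $\Esp[YR_{nn}e]=\Esp[R_{nn}Z_m]+\Esp[R_{nn}(Ye-Z_m)]$, the last term is at most $\|R_{nn}\|_\infty\|Ye-Z_m\|_1$, small uniformly in $n$ for $m$ large, while $\Esp[R_{nn}Z_m]\to\Esp[Z_m]=\Esp[Ye]$ for each $m$ by (b); a three-$\epsilon$ argument then gives $\Esp[YR_{nn}e]\to\Esp[Ye]$, and the stable convergence follows. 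The main obstacle is exactly this last mechanism: the exponential-martingale identity only delivers $\Esp[R_{nn}]=1$, and one must \emph{detach} the predictable factor $P_n$ to recover the Gaussian — this is why one is forced to prove the stronger stable statement and to approximate $e^{-t^2\eta^2/2}$ by $\cg_m$-measurable variables, and, since that exponential is not integrable when $\eta^2$ is unbounded, to carry out the whole argument under the clock truncation, the delicate point being to keep $1/P_n$ under control throughout.
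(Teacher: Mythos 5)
The paper offers no proof of this statement; it is quoted from Hall and Heyde (Corollary 3.1), and your argument is essentially the standard proof from that source: normalize to a martingale-difference array, truncate the increments via the conditional Lindeberg condition, stop the conditional-variance clock at a level $\lambda$, compare $e^{itS_n}$ with the predictable product $P_n=\prod_{k\le n}\xi_{nk}$ through the exponential martingale $R_{nk}$, and use the fixed (nested) filtration to obtain stable convergence, deconditioning by $\cg_m$-approximation. The reductions and the final three-epsilon assembly are the right ones.

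One step, however, is stated too weakly to support the use you make of it. You only get $|P_n|\ge e^{-C_t\lambda}$ \emph{off an event of vanishing probability} (because you let the Taylor remainders $r_{nk}$ enter the lower bound on $\operatorname{Re}\xi_{nk}$), yet you then invoke bounded convergence for $\Esp[YR_{nn}(P_n-e)]$ and write $\|R_{nn}\|_\infty\|Ye-Z_m\|_1$. Without a sure bound this does not stand as written: a priori $\|R_{nn}\|_\infty$ can be of order $2^n$, and on the exceptional event the piece $\Esp[|R_{nn}|\,e\,\ind{B_n}]$ is not controlled (the identity $|R_{nn}P_n|=1$ only handles the other piece). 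The fix is already contained in your truncations and should be made explicit: after truncation $|X_{nk}|\le 2\delta$ with $2t^2\delta^2\le 1/2$, so the inequality $\cos u\ge 1-u^2/2$ gives $|\xi_{nk}|\ge \operatorname{Re}\xi_{nk}\ge 1-t^2v_{nk}/2\ge e^{-t^2v_{nk}}$ pointwise, and since the stopped clock is bounded by $\lambda$ (up to $\max_k v_{nk}$), $|P_n|\ge e^{-t^2(\lambda+1)}$ \emph{surely}, hence $\sup_n\|R_{nn}\|_\infty<\infty$; with that, your bounded-convergence and $L^1$-approximation steps go through verbatim. Two smaller points to tidy in the same spirit: define $\tau$ predictably, i.e.\ $\{k\le\tau\}=\{\sum_{j\le k}v_{nj}\le\lambda\}\in\cg_{k-1}$, so the stopped clock is genuinely $\le\lambda$ and the stopped increments keep predictable indicators; and in expanding $\log P_n$ use $\max_k v_{nk}\to 0$ in probability (a consequence of condition (1), which you already invoke for the clock), rather than only $\max_k v_{nk}\le\delta^2$, so that the limit is exactly $-t^2(\eta^2\wedge\lambda)/2$ with no residual $O(\delta^2\lambda)$ term. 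With these adjustments the proof is complete and coincides with the argument in the reference the paper cites.
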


However, $M_n^*$ is not a martingale in the filtration $(\cf_n,\ n\ge 1)$,
because the $(n+1)$st branch $\cb_{n+1}$ might be connected to $\dt_n$ through
a vertex on $\llbracket \emptyset,\xrac \rrbracket$. In that case,
$M_{n+1}^*-M_n^*$ has a nonnegative $\cf_n$-measurable part, corresponding to
the atoms on $\llbracket s_{\emptyset,n+1},\xrac \rrbracket$. For this
reason, we will consider 
\[ \widehat{M}_n = \sum_{s\in \dt_n\setminus \dt_1} \ind{\{\theta(s-) >
\theta(s)\}}
-\int_{\dt_n\setminus \dt_1} \theta(s)\ \ell(ds),\quad n\ge 2 \]
and $\widehat{M}_1=0$. The process $(\widehat{M}_n,n\ge 1)$ is a
$(\cf_n)$-martingale. It is actually more convenient to introduce
the filtration $(\cg_n,\ n\ge 1)$, defined by:
\[ \cg_n = \sigma(\{(\dt_m,\ m\ge 1),\ (\theta(s),\ s\in \dt_n)\}),
\]
Notice that the branching point $s_{n+1}=\cb_{n+1}\cap \dt_n$,
as well as $\ell(\cb_{n+1})$ and $\theta(s_{n+1})$ are all $\cg_n$-measurable.
In this filtration, $\widehat{M}$ is also a martingale. Indeed, it is obvious
that $\widehat{M}$ is $\cg$-adapted. Furthermore, we have
\[ \widehat{M}_{n+1}-\widehat{M}_n= \sum_{s\in \cb_{n+1}}
\ind{\{\theta(s-)>\theta(s)\}}-\int_{\cb_{n+1}} \theta(s)\ \ell(ds),\]
 which is, conditionally on $\cg_n$, distributed as $N_{\ell(\cb_{n+1})}$, where
$N$ is the martingale from \eqref{MartRec1} for a linear record process started
at $\theta(s_{n+1})$ . Thus, $\Espp[\widehat{M}_{n+1}-\widehat{M}_n|\cg_n] =
0$. 

\subsection{Convergence of the asymptotic variance}

 In order to get a convergence in distribution of $n^{-1/4} \widehat{M}_n$, we
first need to compute the asymptotic variance of the martingale. This is done in
the following proposition.
\begin{proposition} \label{Prop:ConvVarCond}
 We have:
\begin{equation} \label{ConvVarCond}
 \lim_{n\to\infty} \frac{1}{\sqrt{n}} \sum_{k=2}^n
\Espp \left[\left(\widehat{M}_{k}-\widehat{M}_{k-1}\right)^2 \Big|
\cg_{k-1}\right] = \sqrt{2}\Theta, 
\end{equation}
in probability. 
\end{proposition}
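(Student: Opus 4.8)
The plan is to compute the conditional variance of each martingale increment using the explicit description of $\widehat M_{k}-\widehat M_{k-1}$ as a linear record martingale, and then to recognize the resulting sum as a Riemann-type sum converging to $\sqrt 2\,\Theta$. First I would recall from the discussion preceding the proposition that, conditionally on $\cg_{k-1}$, the increment $\widehat M_{k}-\widehat M_{k-1}=\sum_{s\in\cb_k}\ind{\{\theta(s-)>\theta(s)\}}-\int_{\cb_k}\theta(s)\,\ell(ds)$ is distributed as $N_{\ell(\cb_k)}$ for a linear record process started at $\theta(s_k)$, where $s_k$ and $\ell(\cb_k)$ are independent given $\cg_{k-1}$, with $\ell(\cb_k)$ having the density $r_{L_{k-1}}$ from \eqref{eq:DensLBk} and $s_k$ uniform on $\dt_{k-1}$. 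Using the martingale \eqref{MartRec2}, $\Esp_q[N_t^2]=\Esp_q[\int_0^t\theta(s)\,ds]=\int_0^t\frac{1-e^{-qs}}{s}\,ds$ by \eqref{EspTheta} (noting $\theta(0)=q<\infty$ here since $\theta(s_k)<\infty$ a.s.). Hence
\[
\Espp\!\left[(\widehat M_k-\widehat M_{k-1})^2\,\middle|\,\cg_{k-1}\right]
=\int_{\dt_{k-1}}\frac{\ell(ds)}{L_{k-1}}\int_0^\infty r_{L_{k-1}}(dx)\int_0^x\frac{1-e^{-\theta(s)u}}{u}\,du.
\]

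Next I would extract the leading order. Since $\ell(\cb_k)$ is of order $L_{k-1}^{-1}\asymp k^{-1/2}$, the inner integral $\int_0^x\frac{1-e^{-\theta(s)u}}{u}\,du$ is, for typical $s\in\dt_{k-1}^*$ (where $\theta(s)$ stays bounded), asymptotically $\theta(s)\int_0^x u\cdot\frac{1}{u}\cdot\ldots$—more precisely $\int_0^x\frac{1-e^{-qu}}{u}\,du\sim qx$ as $x\to0$, so the inner double integral is $\sim\theta(s)\,\Espp[\ell(\cb_k)\mid\cg_{k-1}]=\theta(s)\cdot\Espp[\int r_{L_{k-1}}(dx)\,x]$. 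Using Lemma \ref{Momentsr}, $\int r_{L_{k-1}}(dx)\,x\sim L_{k-1}^{-1}$, so each increment variance is $\sim\frac{1}{L_{k-1}}\int_{\dt_{k-1}^*}\theta(s)\frac{\ell(ds)}{L_{k-1}}\sim\frac{1}{L_{k-1}}\cdot\frac{\Theta\,L_{k-1}}{\sqrt{2k}}$, i.e. of order $\frac{\Theta}{\sqrt2}\cdot\frac1{\sqrt k}\cdot\frac{1}{L_{k-1}}\cdot L_{k-1}$; keeping constants straight, $\Espp[(\widehat M_k-\widehat M_{k-1})^2\mid\cg_{k-1}]\sim\frac{1}{L_{k-1}}\cdot\frac{1}{L_{k-1}}\int_{\dt_{k-1}^*}\theta(s)\ell(ds)\cdot c$, and since $\frac1{L_{k-1}}\int_{\dt_{k-1}^*}\theta(s)\ell(ds)\to\Theta$ and $L_{k-1}\sim\sqrt{2k}$, summing over $k=2,\dots,n$ and dividing by $\sqrt n$ gives $\sum_{k\le n}\frac{c\,\Theta}{2k}\cdot\frac{1}{\sqrt n}\cdot$(correction) $\to\sqrt2\,\Theta$ once $c$ is pinned down; $\frac1{\sqrt n}\sum_{k=2}^n\frac{1}{\sqrt k}\to 2$, which should supply the factor producing $\sqrt2\,\Theta$.

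To make this rigorous I would (i) write each conditional-variance term exactly as above, (ii) split the $s$-integral over $\dt_{k-1}^*$ and $\dt_{k-1}\setminus\dt_{k-1}^*$, showing the single-branch piece $\dt_{k-1}\setminus\dt_{k-1}^*$ contributes negligibly (its length is $h_{\emptyset,k-1}\to0$, controlled as in the proof of Proposition \ref{prop:ConvReste} and via the linear-record estimates), (iii) on $\dt_{k-1}^*$, replace $\int_0^x\frac{1-e^{-\theta(s)u}}{u}\,du$ by $\theta(s)x$ up to an error controlled by $\theta(s)^2x^2$ (since $0\le qx-\int_0^x\frac{1-e^{-qu}}{u}du\le q^2x^2/4$), then use Lemma \ref{Momentsr} to evaluate $\int r_{L_{k-1}}(dx)x\sim L_{k-1}^{-1}$ and bound the $x^2$-error via $\int r_{L_{k-1}}(dx)x^2=O(L_{k-1}^{-2})$ together with the $L^1$-boundedness of $\int_{\dt_{k-1}^*}\theta(s)^2\ell(ds)/L_{k-1}$ from Lemma \ref{Theta3/2}, (iv) use $L_{k-1}\sim\sqrt{2(k-1)}$ and the a.s. convergence $\frac{1}{L_{k-1}}\int_{\dt_{k-1}^*}\theta(s)\ell(ds)\to\Theta$ (from \eqref{LGN}, \eqref{ConvProb}, \eqref{AsymLong}), so that the general term behaves like $\frac{\Theta}{\sqrt{2k}}(1+o(1))$ in an appropriate sense, and (v) invoke $\frac1{\sqrt n}\sum_{k=2}^n k^{-1/2}\to 2$ and a Cesàro/Toeplitz argument (the summands converge a.s. after renormalization, uniformly integrable by the $L^1$ bounds) to pass from the a.s. asymptotics of individual terms to convergence in probability of the normalized sum, landing on $2\cdot\frac{\Theta}{\sqrt2}=\sqrt2\,\Theta$.

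The main obstacle I anticipate is step (v): the summands $\Espp[(\widehat M_k-\widehat M_{k-1})^2\mid\cg_{k-1}]$ are random and one only has $\frac1{L_{k-1}}\int_{\dt_{k-1}^*}\theta(s)\ell(ds)\to\Theta$ in an a.s./$L^1$ sense, so turning "each term $\sim\frac{\Theta}{\sqrt{2k}}$" into "$\frac1{\sqrt n}\sum\to\sqrt2\,\Theta$ in probability" requires a genuine Toeplitz-lemma argument with a uniform-integrability input; the error terms from replacing the inner integral by $\theta(s)x$ and from the $\dt_{k-1}\setminus\dt_{k-1}^*$ branch must be summed and shown to be $o(\sqrt n)$ in $L^1$, which is exactly where Lemmas \ref{Theta3/2}, \ref{Momentsr} and the moment bound \eqref{eq:MomHn} for $h_{\emptyset,k-1}$ do the work. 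Pinning down the exact constant (so that the factor $2$ from $\sum k^{-1/2}$ times the $\Theta/\sqrt2$ per-term asymptotics yields precisely $\sqrt2\,\Theta$, not some other multiple) is a bookkeeping point I would check carefully against Lemma \ref{Momentsr}.
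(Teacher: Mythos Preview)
Your displayed formula for the conditional variance is incorrect because you have misread the filtration $\cg_{k-1}$. By definition $\cg_n=\sigma(\{(\dt_m,\ m\ge 1),\ (\theta(s),\ s\in\dt_n)\})$, so $\cg_{k-1}$ already contains the entire tree sequence; in particular the attachment point $s_k$, its record value $\theta(s_k)$, and the branch length $\ell(\cb_k)$ are all $\cg_{k-1}$-measurable (this is stated explicitly just before the proposition). Thus
\[
\Espp\!\left[(\widehat M_k-\widehat M_{k-1})^2\,\middle|\,\cg_{k-1}\right]
=\Esp_{\theta(s_k)}\!\left[\int_0^{\ell(\cb_k)}\theta(s)\,ds\right]
=\int_0^{\theta(s_k)\ell(\cb_k)}\frac{1-e^{-u}}{u}\,du,
\]
with no averaging over $s_k$ or $\ell(\cb_k)$. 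What you wrote is the $\cf_{k-1}$-conditional expectation, and the Toeplitz/Taylor scheme you outline is tailored to that wrong expression.

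Once the correct expression is in place the paper's route is shorter and avoids the delicate summation you describe. Observe that the sums $\sum_{k=2}^n\int_{\cb_k}\theta(s)\,\ell(ds)$ telescope to $\int_{\dt_n\setminus\dt_1}\theta(s)\,\ell(ds)$, which after removing the single branch $\llbracket s_{\emptyset,n},\lf_1\rrbracket$ is exactly $\int_{\dt_n^*}\theta(s)\,\ell(ds)$, known to satisfy $n^{-1/2}\int_{\dt_n^*}\theta\to\sqrt{2}\,\Theta$ a.s. It then suffices to show that replacing $\int_{\cb_k}\theta$ by its $\cg_{k-1}$-conditional expectation costs $o(\sqrt n)$; this is done by bounding the bracket of the $\cg$-martingale $Q_n=\sum_{k\le n}\bigl(\int_{\cb_k}\theta-\Espp[\int_{\cb_k}\theta\mid\cg_{k-1}]\bigr)$ crudely by $2L_n$ and invoking a martingale law of large numbers. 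Your per-term expansion with Lemmas \ref{Theta3/2} and \ref{Momentsr} is not needed here, and in fact those lemmas control $\cf$-conditional quantities, not the $\cg$-conditional ones appearing in the statement.
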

\begin{proof}
 Using the martingale from \eqref{MartRec2}, in the present case of a linear
record process started at $\theta(s_{k})$, we easily get that, for
$k\ge 2$, 
\begin{equation}\label{7*}
 \Espp \left[\left(\widehat{M}_{k}-\widehat{M}_{k-1}\right)^2 \Big|
\cg_{k-1}\right] = \Espp\left[ \int_{\cb_{k}} \theta(s)\ \ell(ds) \Big|\cg_{k-1}
\right].
\end{equation}
A Law of Large Numbers argument will show that we have
 \begin{equation}\label{LGNMart} \lim_{n\to\infty} \frac{1}{\sqrt n}
\sum_{k=2}^n
\Espp\left[\int_{\cb_{k}} \theta(s)\ \ell(ds) \Big|\cg_{k-1}\right] =
\lim_{n\to\infty} \frac{1}{\sqrt n} \int_{\dt_n^*\setminus
\bra{\xrac}{\lf_1}} \theta(s)\ \ell(ds). \end{equation}
We postpone the proof of this equality to the end of this section. Now, recall
Proposition 6.3 in \cite{Abraham2011}, which shows that a.s.
\[ \lim_{n\to\infty} \frac{1}{\sqrt n} \int_{\dt_n^*} \theta(s)\ \ell(ds) =
\sqrt{2}\Theta. \]
Since $\dt_n\setminus \cb_1=\dt_n^*\setminus \bra{\xrac}{\lf_1}$, the
convergence \eqref{ConvVarCond} will follow if we manage to prove that 
\[ S_n = \frac{1}{\sqrt{n}}\int_{\llbracket
\xrac,\mathrm{x}_1\rrbracket}\theta(s)\ \ell(ds) \]
converges in probability to 0. We will simply compute the first
moment:
\begin{align*}
 \sqrt{n} \Espp[S_n] = \Espp\left[ \int_{\llbracket
\xrac,\mathrm{x}_1\rrbracket} \theta(s)\ \ell(ds) \right] & =
\Espp \left[ \int_{h_{\emptyset,n}}^{L_1} \theta(s)\  ds \right] \\
 & = \Espp\left[ \int_0^{L_1-h_{\emptyset,n}}
\Esp_{\trac}[\theta(s)]\ ds\right],
\intertext{by the Markov property of
$\theta$ at $h_{\emptyset,n}$. We can compute this expectation using
\eqref{EspTheta}:}
 & = \Espp\left[ \int_0^{L_1-h_{\emptyset,n}}
\frac{1-\expp{-s\trac}}{s}\ ds \right] \\
 & \le \Espp \left[ \int_0^{L_1} \frac{1}{s}
(s\trac)^{1/4}\ ds \right] = 4 \Espp\left[\trac^{1/4}
L_1^{1/4}\right], \end{align*}
by the elementary inequality $1-\exp(-t)\le t^{1/4}$. The Cauchy-Schwarz
inequality then gives the bound 
\begin{equation}\label{BorneSpine} \sqrt{n} \Espp[S_n] \le C\cdot
\Espp\left[\trac^{1/2}\right]^{1/2}. \end{equation}
 As $\trac$ is, conditionally on $\ct$, exponentially distributed with parameter
$h_{\emptyset,n}$, we get 
\[
 \Espp[S_n] \le C\cdot n^{-1/2} \Espp[\hrac^{-1/2}]^{1/2},
\]
which converges to 0 as $n\to\infty$ by \eqref{eq:MomHn}, which shows
\eqref{ConvVarCond}. 

We still have to show \eqref{LGNMart} to end the proof. The process
\begin{equation}\label{10*}
\left(Q_n = \sum_{k=2}^n \int_{\cb_k} \theta(s)
\ell(ds)-\Espp\left[\left.\int_{\cb_k}
\theta(s)\ell(ds)\right|\cg_{k-1}\right],\ n\ge1\right) 
\end{equation}
is a $\cg$-martingale. We will write 
\begin{equation} \label{8*}
\langle Q \rangle_n = \sum_{k=1}^n \Espp \left[
\left(\int_{\cb_k}
\theta(s)\ \ell(ds)\right)^2 \Bigg| \cg_{k-1}\right] - \Espp\left[\int_{\cb_k}
\theta(s)\ \ell(ds) \Big| \cg_{k-1}\right] ^2 
\end{equation}
for its quadratic variation process. Conditionally on $\cg_{k-1}$, the process
$(\theta(s),\ s\in \cb_{k})$ is distributed as a linear record process started
from $\theta(s_k)$. Hence, using \eqref{eq:MomHn} and \eqref{EspTheta}, we get:
\begin{equation}
\Espp\left[\int_{\cb_k} \theta(s)\ \ell(ds)\Big|\cg_{k-1}\right]  =
\Esp_{\theta(s_k)}\left[ \int_0^{\ell(\cb_{k})} \theta(s)\ ds \right]  =
\int_0^{\theta(s_k) \ell(\cb_k)} \frac{1-\expp{-u}}{u}\ du.\label{CrochQn1}
\end{equation}
Similarly, we have:
\begin{align*}
 \Espp\left[\left(\int_{\cb_k} \theta(s)\
\ell(ds)\right)^2\Bigg|\cg_{k-1}\right] & =
\Esp_{\theta(s_k)}\left[\left(\int_0^{\ell(\cb_k)} \theta(s)\ ds
\right)^2 \right] \\
 & = 2\cdot \Esp_{\theta(s_k)}\left[ \int_0^{\ell(\cb_k)} du \int_0^u dv\
\theta(u)\theta(v) \right].
\end{align*}
The latter can be computed by applying the Markov property at $u$, as well as
\eqref{EspTheta}, giving
\begin{multline}\label{CrochQn2}
 \Espp\left[\left(\int_{\cb_k} \theta(s)\ \ell(ds)
\right)^2\Bigg|\cg_{k-1}\right]=
\frac{1}{\theta(s_k)} \int_0^{\theta(s_k)\ell(\cb_k)}
\frac{1-\expp{-s}}{s}-\expp{-s}\ 
ds \\ + 2 \int_0^{\theta(s_k) \ell(\cb_k)} ds \int_0^s
dt \frac{1}{s-t}\left(\frac{1-\expp{-t}}{t}-\frac{1-\expp{-s}}{s}\right).
\end{multline}
Now, putting \eqref{CrochQn1} and \eqref{CrochQn2} together, compensations
occur, so that we get, after tedious computations:
\begin{eqnarray*}
 \langle Q \rangle_n & = & \sum_{k=1}^n \Espp\left[\left(\int_{\cb_k}
\theta(s)\ \ell(ds)\right)^2 \Bigg| \cg_{k-1}\right] - \Espp\left[\int_{\cb_k}
\theta(s)\ \ell(ds) \Big| \cg_{k-1}\right]^2 \\
 & = & \sum_{k=1}^n \frac{2}{\theta(s_k)}
\int_0^{\theta(s_k)\ell(\cb_k)} \frac{1-\expp{-s}}{s} -\expp{-s}\ ds
\\
 & & \ + 2 \int_0^{\theta(s_k)\ell(\cb_k)} ds \int_0^s
dt\ \frac{s\expp{-s}-t\expp{-t} -(s-t)\expp{-(s+t)}}{st(s-t)}.
\end{eqnarray*}
The term $s\expp{-s}-t\expp{-t} -(s-t)\expp{-(s+t)}$ being negative for $t<s$,
we get
\begin{eqnarray*}
 0  \le \langle Q \rangle_n & \le & \sum_{k=1}^n
\frac{2}{\theta(s_k)}
\int_0^{\theta(s_k)\ell(\cb_k)} \frac{1-\expp{-s}}{s} -\expp{-s}\ ds
\\ 
 & \le & \sum_{k=1}^n\frac{2}{\theta(s_k)}
\theta(s_k)\ell(\cb_k) = 2 \sum_{k=1}^n \ell(\cb_k),
\end{eqnarray*}
the second inequality coming from $(1-\expp{-s})/s-\expp{-s} \le 1$ if $s>0$.
Then, recall that by definition, $\sum_{k=1}^n \ell(\cb_k) \le L_n$, and
that $L_n$ is the square root
of a \textsl{Gamma}$(n,1)$-distributed variable (Proposition 5.2 in
\cite{Abraham2011}).
Thus, for any $\gamma>1/2$, we have 
\begin{equation}\label{9*}
\frac{1}{n^{\gamma}} \Espp[\langle Q \rangle_n] \le \frac{2}{n^{\gamma}}
\Espp[L_n] \rightarrow 0 
\end{equation}
Then, by the conditional Law of Large Numbers (Theorem 1.3.17
in \cite{Duflo}), we get that $n^{-1/4-\epsilon}Q_n$ converges a.s. to 0 for
any $\epsilon>0$, which implies \eqref{LGNMart}, hence ends the proof.
\end{proof}

\subsection{Asymptotic smallness}

We now turn to the proof of the asymptotic smallness of the sequence
$(\widehat{M}_n,\ n \ge 1)$. In order to prove this, we will use a
Liapounov-type criterion, which is sufficient to prove asymptotic
negligibility. 
\begin{proposition}\label{Prop:AsymNegl}
 We have the following convergence in probability:
\[ \lim_{n\to\infty} \frac{1}{\sqrt{n}} \sum_{k=1}^n \Espp\left[ (\widehat{M}_k
- \widehat{M}_{k-1})^2 \ind{\{|\widehat M_k-\widehat M_{k-1}|>\epsilon
n^{1/4}\}} \Big| \cg_{k-1} \right] = 0. \]
\end{proposition}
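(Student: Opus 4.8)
The plan is to deduce the conditional Lindeberg condition from the fourth‑moment (Liapounov‑type) criterion announced above. Since $\ind{\{|\widehat M_k-\widehat M_{k-1}|>\epsilon n^{1/4}\}}\le \epsilon^{-2}n^{-1/2}(\widehat M_k-\widehat M_{k-1})^2$ pointwise, the sum in the statement is bounded $\Pr_\infty$‑a.s. by
\[
 \frac{1}{\epsilon^2 n}\sum_{k=1}^n\Espp\left[(\widehat M_k-\widehat M_{k-1})^4\,\big|\,\cg_{k-1}\right],
\]
and it suffices to prove that this quantity tends to $0$ in probability. Exactly as in the proof of Proposition \ref{prop:ConvMartFerm}, I would truncate on the events $E_{k-1}=E_{k-1}^1\cap E_{k-1}^2$ introduced there: since $\ind{E_{k-1}}=1$ for all $k$ large, a.s., and each $\Espp[(\widehat M_k-\widehat M_{k-1})^4\,|\,\cg_{k-1}]$ is a.s. finite (conditionally on $\cg_{k-1}$ the increment is a compensated Poisson‑type variable with finite rate $\theta(s_k)$ over a finite time $\ell(\cb_k)$), it is enough to show $n^{-1}\sum_{k=1}^n\Espp[(\widehat M_k-\widehat M_{k-1})^4\ind{E_{k-1}}]\to 0$; by Cesàro summation this follows from $\Espp[(\widehat M_k-\widehat M_{k-1})^4\ind{E_{k-1}}]\to 0$, which I will prove with a polynomial rate.

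The second step computes the conditional fourth moment. Conditionally on $\cg_{k-1}$, $\widehat M_k-\widehat M_{k-1}$ is distributed as $N_{\ell(\cb_k)}$ for a linear record process started at $\theta(s_k)$, where $\theta(s_k)$ and $\ell(\cb_k)$ are $\cg_{k-1}$‑measurable; evaluating the martingale \eqref{MartRec3} at the (then deterministic) time $\ell(\cb_k)$ and using $N_0=0$ gives
\[
 \Espp\left[(\widehat M_k-\widehat M_{k-1})^4\,\big|\,\cg_{k-1}\right]=3F\bigl(\theta(s_k),\ell(\cb_k)\bigr)+\Esp_{\theta(s_k)}\left[\int_0^{\ell(\cb_k)}\theta(s)\,ds\right],
\]
with $F(q,t)=\Esp_q[(\int_0^t\theta(s)ds)^2]$. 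Conditioning one level further on $\cf_{k-1}$ and using that, given $\cf_{k-1}$, the point $s_k$ is uniform on $\dt_{k-1}$ and $\ell(\cb_k)$ has law $r_{L_{k-1}}$ (independent of one another and of the record on $\cb_k$), together with \eqref{EspTheta}, I obtain
\[
 \Espp\left[(\widehat M_k-\widehat M_{k-1})^4\,\big|\,\cf_{k-1}\right]=\int_{\dt_{k-1}}\frac{\ell(ds)}{L_{k-1}}\int_0^\infty r_{L_{k-1}}(dx)\left(3F(\theta(s),x)+\int_0^{\theta(s)x}\frac{1-\expp{-u}}{u}\,du\right).
\]

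The third and main step bounds the two resulting pieces. The $F$‑piece is precisely $3L_{k-1}^2$ times the quantity that dominates $\Espp[G_k^2\,|\,\cf_{k-1}]$ in the proof of Proposition \ref{prop:ConvMartFerm}; running that estimate again — split $\dt_{k-1}=\dt_{k-1}^*\cup(\dt_{k-1}\setminus\dt_{k-1}^*)$, dominate $F$ by \eqref{Fql1} on $\dt_{k-1}^*$ and by \eqref{Fql2} on the first edge, integrate in $x$ using the moments of $r_{L_{k-1}}$ (Lemma \ref{Momentsr}), and control the $\theta$‑integrals via Lemma \ref{Theta3/2}, Lemma \ref{lem:BorneL2} and \eqref{eq:MomHn} — the extra factor $L_{k-1}^2$ only partially cancels the negative powers of $L_{k-1}$ produced by the $r_{L_{k-1}}$‑moments, leaving an overall power at most $L_{k-1}^{-3/2}$, which on $E_{k-1}^1$ is $O(k^{-3/4+O(\epsilon)})$; hence that piece has expectation $O(k^{-3/4+O(\epsilon)})$ on $E_{k-1}$. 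For the lower‑order piece I would use $\int_0^a\frac{1-\expp{-u}}{u}du\le a$ on $\dt_{k-1}^*$ and $\int_0^a\frac{1-\expp{-u}}{u}du\le C\sqrt a$ on the first edge. The $\dt_{k-1}^*$ contribution is then $\le\bigl(\int r_{L_{k-1}}(dx)\,x\bigr)\bigl(\int_{\dt_{k-1}^*}\tfrac{\ell(ds)}{L_{k-1}}\theta(s)\bigr)\lesssim L_{k-1}^{-1}\int_{\dt_{k-1}^*}\tfrac{\ell(ds)}{L_{k-1}}\theta(s)$, of expectation $O(k^{-1/2+\epsilon})$ on $E_{k-1}$ since $L_{k-1}\ge k^{1/2-\epsilon}$ there and $\int_{\dt_{k-1}^*}\theta(s)\ell(ds)/L_{k-1}$ is bounded in $L^2$ on $E_{k-1}$ (Lemma \ref{lem:BorneL2}). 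The first‑edge contribution is $\lesssim\bigl(\int r_{L_{k-1}}(dx)\,x^{1/2}\bigr)\,L_{k-1}^{-1}\int_0^{h_{\emptyset,k-1}}\theta(s)^{1/2}ds$; since $\theta$ restricted to that edge is a linear record from $q=\infty$, $\Esp_\infty[\int_0^{h_{\emptyset,k-1}}\theta(s)^{1/2}ds]=C\,h_{\emptyset,k-1}^{1/2}$ (as in the proof of Proposition \ref{prop:ConvMartFerm}), so this term is $\lesssim L_{k-1}^{-3/2}h_{\emptyset,k-1}^{1/2}$, of expectation $O(k^{-1+O(\epsilon)})$ on $E_{k-1}$ by \eqref{eq:MomHn}.

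Combining the three estimates gives $\Espp[(\widehat M_k-\widehat M_{k-1})^4\ind{E_{k-1}}]=O(k^{-1/2+O(\epsilon)})$, which tends to $0$ for $\epsilon$ small, so $n^{-1}\sum_{k=1}^n\Espp[(\widehat M_k-\widehat M_{k-1})^4\ind{E_{k-1}}]=O(n^{-1/2+O(\epsilon)})\to 0$; unwinding the reductions of the first step yields the Proposition. The hard part is the third step, and within it the first‑edge (spine) term: there $\theta$ blows up near the root, so the trivial bound $(1-\expp{-u})/u\le 1$ is useless and one must use the sublinear estimate $\int_0^a(1-\expp{-u})u^{-1}du\le C\sqrt a$, paired with the integrability of $\Esp_\infty[\theta(s)^{1/2}]=\Gamma(3/2)s^{-1/2}$ near $s=0$ (and, for the $F$‑term, the correspondingly milder bound \eqref{Fql2}); the rest is bookkeeping, checking that the powers of $\epsilon$ lost through the truncation events $E_{k-1}$ never push an exponent up to $0$.
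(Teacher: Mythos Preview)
Your argument is correct, but it takes a considerably heavier route than the paper's. Both proofs start the same way: the Liapounov reduction to fourth moments, and the use of the martingale \eqref{MartRec3} to write
\[
 \Espp\left[(\widehat M_k-\widehat M_{k-1})^4\,\big|\,\cg_{k-1}\right]
 = 3\,\Espp\!\left[\Big(\int_{\cb_k}\theta(s)\,\ell(ds)\Big)^2\,\Big|\,\cg_{k-1}\right]
 + \Espp\!\left[\int_{\cb_k}\theta(s)\,\ell(ds)\,\Big|\,\cg_{k-1}\right].
\]
From here the paper simply recycles the machinery already built in Proposition~\ref{Prop:ConvVarCond}: the second term, summed and divided by $n$, is $n^{-1/2}$ times a sequence already shown to converge to $\sqrt{2}\,\Theta$, hence goes to $0$; the first term is split, via \eqref{8*}, as $\langle Q\rangle_n + \sum_{k\le n}\Espp[\int_{\cb_k}\theta\,\ell|\cg_{k-1}]^2$, and both pieces divided by $n$ go to $0$ by the already-proved bound \eqref{9*} on $\langle Q\rangle_n$ and by the elementary Lemma~\ref{Series}. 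No truncation events, no return to $\cf_{k-1}$-conditioning, no re-use of Lemma~\ref{lem:Fql} or of the $G_k$-estimates are needed.

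By contrast, you drop from $\cg_{k-1}$ back to $\cf_{k-1}$, reintroduce the truncation events $E_{k-1}$, and essentially replay the hard estimates of Proposition~\ref{prop:ConvMartFerm} (the $G_k$ bound) with one less factor of $L_{k-1}^{-2}$. This works---your bookkeeping of the $L_{k-1}$ powers and the first-edge treatment via the sublinear bound on $\int_0^a(1-\expp{-u})u^{-1}\,du$ are fine---but it re-derives from scratch what the paper already has packaged in \eqref{9*} and Lemma~\ref{Series}. The payoff of the paper's route is brevity and modularity: once the conditional-variance Proposition~\ref{Prop:ConvVarCond} is in hand, asymptotic smallness follows in a few lines, staying entirely at the $\cg_{k-1}$ level. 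Your route would pay off only if one wanted explicit polynomial rates on the unconditional fourth moments themselves, which are not needed here.
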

\begin{proof}
 We use the standard inequality $\ind{\{|\widehat M_k-\widehat M_{k-1}|>\epsilon
n^{1/4}\}} \le (\widehat M_k -\widehat M_{k-1})^2/\epsilon^2 \sqrt{n}$ to get
that, for $\epsilon >0$:
\begin{equation*}
 \frac{1}{\sqrt{n}} \sum_{k=1}^n \Espp\left[ \left(\widehat{M}_k -
\widehat{M}_{k-1}\right)^2 \ind{\{|\widehat
M_k-\widehat M_{k-1}|>\epsilon n^{1/4}\}} \Big| \cg_{k-1} \right] \le
\frac{1}{\epsilon^2 n}
\sum_{k=1}^n \Espp\left[\left(\widehat M_k-\widehat M_{k-1}\right)^4 \Bigg|
\cg_{k-1}\right] .
\end{equation*}
Using the martingale from \eqref{MartRec3}, we find that:
\begin{multline*}
 \frac{1}{\epsilon^2 n} \sum_{k=2}^n \Espp\left[\left(\widehat M_k -\widehat
M_{k-1}\right)^4 \Bigg| \cg_{k-1}\right] =
\frac{3}{\epsilon^2 n} \sum_{k=2}^n \Espp\left[\left(\int_{\cb_k} \theta(s)\
\ell(ds)\right)^2 \Bigg|\cg_{k-1}\right] \\
+ \frac{1}{\epsilon^2 n} \sum_{k=2}^n \Espp\left[\int_{\cb_k} \theta(s)\
\ell(ds) \Big|\cg_{k-1}\right].
\end{multline*}
In this expression, the term $n^{-1}\sum_{k=2}^n \Es[\int_{\cb_k}
\theta(s)\ell(ds) |\cg_{k-1}]$ converges in probability to 0, according to
\eqref{7*} and Proposition \ref{Prop:ConvVarCond}. Furthermore, recall from
\eqref{8*} that 
\begin{equation*}
 \frac{3}{\epsilon^2 n} \sum_{k=1}^n \Espp\left[\left(\int_{\cb_k} \theta(s)\
\ell(ds)\right)^2 \Bigg|\cg_{k-1}\right] = \frac{3\langle
Q\rangle_n}{\epsilon^2n} +
\frac{3}{\epsilon^2 n} \sum_{k=1}^n \Espp\left[\int_{\cb_k} \theta(s)\ \ell(ds)
\Big|\cg_{k-1}\right]^2,
\end{equation*}
where $Q$ is the martingale defined in \eqref{10*}. The quadratic variation
process $\langle Q\rangle_n/n$ converges in probability to 0 by
\eqref{9*}. Also, applying Lemma \ref{Series} to $a_k=\Espp[\int_{\cb_k}
\theta(s)\ell(ds) |\cg_{k-1}]$, we find that 
\[
 \frac{1}{n} \sum_{k=1}^n \Espp\left[\int_{\cb_k} \theta(s)\ \ell(ds)
\Big|\cg_{k-1}\right]^2 = 0,
\]
which ends the proof. \end{proof}

Putting all the previous elements together, we can now prove Theorem
\ref{The:TCLPrincipal}. 

\begin{proof}[Proof of Theorem \ref{The:TCLPrincipal}]
 First, we write that 
\[ n^{1/4}\left(\frac{X_n^*}{\sqrt{2n}} - \Theta\right) = \frac{\widehat
M_n}{\sqrt{2}n^{1/4}} + \frac{M_n^* -\widehat M_n}{\sqrt{2}n^{1/4}} +
n^{1/4}\left(\frac{1}{\sqrt{2n}} \int_{\dt_n^*} \theta(s)\ \ell(ds) -
\Theta\right).
\]
The convergence in distribution of $n^{-1/4}\widehat{M}_n$ towards a
non-degenerate limit $Z$ is a consequence of the Martingale Central Limit
Theorem recalled at the beginning of this section with $a_n=n^{1/4}$, as well
as the two Propositions \ref{Prop:ConvVarCond} and \ref{Prop:AsymNegl}.
Furthermore, the limiting random variable $Z$ is indeed distributed as
announced:
\[
 \Espp\left[\expp{itZ}\right] = \Espp\left[\expp{-t^2 \sqrt{2}\Theta/2}\right].
\]
 The term $e_n=M_n^* -\widehat
M_n$ can be expressed as
\[ e_n=M_n^*-\widehat M_n=\sum_{s\in \llbracket
\xrac,\mathrm{x}_1 \rrbracket} \ind{\{\theta(s-)> \theta(s)\}}
-\int_{\llbracket \xrac,\mathrm{x}_1 \rrbracket} \theta(s)\ \ell(ds).
\]
Using the martingale \eqref{MartRec2} to compute its second moment, we get 
\[ 
\Espp\left[e_n^2\right] = \Espp\left[\int_{\llbracket
\xrac,\mathrm{x}_1 \rrbracket} \theta(s)\ \ell(ds)\right], 
\]
so that $n^{-1/4}(M_n^*-\widehat M_n)$ converges to 0 in $L^2$, hence in
distribution as $n\to\infty$, by the previously used bound
\eqref{BorneSpine}. Finally, Proposition \ref{prop:ConvReste} and Proposition
\ref{prop:ConvMartFerm} show that the term $((2n)^{-1/2} \int_{\dt_n^*}
\theta(s)\ \ell(ds) - \Theta)$ brings no contribution in the asymptotic
$n^{1/4}$. This ends the proof.
\end{proof}

\begin{remark}
 Note that, under our assumptions, since $\Theta>0,\ \Pr_\infty$-a.s., we can
actually prove that the convergence in distribution of $n^{-1/4}\widehat{M}_n$
is \emph{mixing} (see \cite{Aldous1978} for more details on mixing limit
theorems). This implies in particular that we can obtain a standard normal
limit by renormalizing by the random factor $V_n$, where $V_n^2$ is the
conditional variance
\[
 V_n^2 = \sum_{k=1}^n
\Espp \left[\left(\widehat{M}_{k}-\widehat{M}_{k-1}\right)^2 \Big|
\cg_{k-1}\right],
\]
instead of the deterministic renormalization $n^{1/4}$. Corollary 3.2 in
\cite{Hall1980} then shows that $V_n^{-1}\widehat{M}_n$ converges in
distribution to a standard $\mathcal{N}(0,1)$ random variable.
\end{remark}

\section*{Technical appendix}

 In this appendix, we will state and prove several lemmas that are used
throughout the paper. They are purely analytic in nature, and their proof is
elementary, so we gather them here, for the reader's convenience. First, we
prove some universal bounds on $F(q,t)=\Esp_q[(\int_0^t\theta(s) ds)^2]$.

\begin{lemma} \label{lem:Fql}
 There exists $C_1,C_2,C_3,C_4 >0$ such that 
\begin{gather}
 F(q,t) \le C_1 (qt)^{3/2}+ C_2 qt^2\label{Fql1} \\
 F(q,t) \le C_3 \log^2(qt)+C_4 q^{-1/2} t^{1/2}\label{Fql2}
\end{gather}
\end{lemma}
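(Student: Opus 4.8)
The plan is to reduce both inequalities to a single exact formula for $F(q,t)$ and then estimate it regime by regime.

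First I would write, by Tonelli, $F(q,t)=\Esp_q\big[(\int_0^t\theta(s)\,ds)^2\big]=2\int_0^t du\int_u^t ds\,\Esp_q[\theta(u)\theta(s)]$. The linear record process is Markov, and conditionally on $(\theta(r),\,r\le u)$ the process $(\theta(u+\cdot))$ is again a linear record process started from $\theta(u)$; so \eqref{EspTheta} gives $\Esp_q[\theta(s)\mid(\theta(r),r\le u)]=(1-e^{-\theta(u)(s-u)})/(s-u)$ for $s\ge u$. Integrating in $s$ and substituting $w=\theta(u)(s-u)$ then yields
\[
 F(q,t)=2\int_0^t \Esp_q\big[\theta(u)\,\psi(\theta(u)(t-u))\big]\,du,\qquad \psi(x):=\int_0^x\frac{1-e^{-w}}{w}\,dw .
\]
Alongside this I would record the elementary facts that $\psi$ is nondecreasing with $\psi(x)\le\min\!\big(x,\,2\sqrt x,\,1+\log_+x\big)$, and ---from $\Esp_q[g(\theta(u))]=e^{-qu}g(q)+\int_0^q g(x)\,u e^{-ux}\,dx$ with $g(x)=x^{\alpha}$--- the moment bound $\Esp_q[\theta(u)^{\alpha}]\le C_{\alpha}(q^{\alpha}\wedge u^{-\alpha})$ for $\alpha>0$, together with the exact value $\int_0^t\Esp_q[\theta(u)]\,du=\psi(qt)$.

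For \eqref{Fql1} I would use $\psi(x)\le 2\sqrt x$ and $(t-u)^{1/2}\le t^{1/2}$ to get $F(q,t)\le 4\,t^{1/2}\int_0^t\Esp_q[\theta(u)^{3/2}]\,du$; splitting the $u$-integral at $u\sim q^{-1}$ and using $\Esp_q[\theta(u)^{3/2}]\le C(q^{3/2}\wedge u^{-3/2})$ bounds the integral by $C(q^{3/2}t\wedge q^{1/2})$, so $F(q,t)\le C'\min\!\big((qt)^{3/2},(qt)^{1/2}\big)$, which in particular is dominated by $C_1(qt)^{3/2}+C_2 qt^2$.

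For \eqref{Fql2} I would instead bound $\psi(\theta(u)(t-u))\le 1+\log_+(\theta(u)(t-u))\le 1+\log_+(qt)$, using $\theta(u)\le q$ and $t-u\le t$, so that $F(q,t)\le 2\big(1+\log_+(qt)\big)\int_0^t\Esp_q[\theta(u)]\,du=2\big(1+\log_+(qt)\big)\psi(qt)\le C\big(1+\log_+(qt)\big)^2$; this is the logarithmic term of \eqref{Fql2}, and the $q^{-1/2}t^{1/2}$ term is obtained by the analogous estimate, applying $\psi(x)\le 2\sqrt x$ together with $\Esp_q[\theta(u)^{3/2}]\le Cu^{-3/2}$ on a suitable portion of the $u$-integral. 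The two Tonelli exchanges, the bounds on $\psi$, and the moment estimates are all routine; the only point requiring care ---and the reason one cannot simply bound $\theta(u)$ by $q$ everywhere--- is that the logarithmic (rather than power-law) growth in $qt$ comes precisely from the genuine decay of $\Esp_q[\theta(u)]=(1-e^{-qu})/u$, which makes $\int_0^t\Esp_q[\theta(u)]\,du=\psi(qt)$ only of order $\log(qt)$, and likewise the decay $\Esp_q[\theta(u)^{3/2}]\lesssim u^{-3/2}$ is what keeps $\int_0^t\Esp_q[\theta(u)^{3/2}]\,du$ of order $q^{1/2}$ uniformly in $t$, producing the factor $(qt)^{1/2}$ (rather than $(qt)^{3/2}$) in the large-$qt$ estimate for \eqref{Fql1}.
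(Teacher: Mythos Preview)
Your integral representation $F(q,t)=2\int_0^t\Esp_q[\theta(u)\,\psi(\theta(u)(t-u))]\,du$ is correct and gives a cleaner route than the paper's, which instead splits $F$ into two explicit pieces and analyses each asymptotically as $qt\to0$ and $qt\to\infty$. Your argument for \eqref{Fql1} goes through exactly as described, and in fact yields the stronger single estimate $F(q,t)\le C\min\bigl((qt)^{3/2},(qt)^{1/2}\bigr)$.

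The gap is in \eqref{Fql2}. From $F(q,t)\le 2(1+\log_+(qt))^2$ you cannot deduce $C_3\log^2(qt)+C_4\, q^{-1/2}t^{1/2}$: take $qt=1$ and $q$ large; the right-hand side equals $C_4/q\to0$, while $F(q,1/q)>0$. In fact the scaling of the linear record process (under $\Pro_q$, $(\theta(s))_{s\ge0}$ has the law of $(q\,\theta(qs))_{s\ge0}$ under $\Pro_1$) gives $F(q,t)=F(1,qt)$, so letting $q\to\infty$ with $qt=x$ fixed in \eqref{Fql2} would force $F(1,x)\le C_3\log^2 x$ for every $x>0$, which fails at $x=1$. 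Thus \eqref{Fql2} \emph{as printed} is false, and your hand-wave about recovering the $q^{-1/2}t^{1/2}$ term ``by the analogous estimate'' cannot be completed: no function of $qt$ alone can be controlled by a term that vanishes as $q\to\infty$.

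This is a defect of the statement, not of your method. Where the lemma is actually invoked in the proof of Proposition~\ref{prop:ConvMartFerm}, the bound used is $F(\theta(s),x)\le C_3(\theta(s)x)^{1/2}+C_4\,\theta(s)^{-1/2}x^{1/2}$, not the $\log^2$ version; your estimate $F(q,t)\le C(qt)^{1/2}$ already delivers this. So your argument proves everything the paper needs.
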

\begin{proof}
 First, we recall that, according to \eqref{CrochQn2}, 
\begin{align*}
 F(q,t) & = \Esp_q\left[\Big(\int_0^t \theta(s)\ ds\Big)^2\right] \\
 & =\frac{1}{q} \int_0^{qt} \frac{1-\expp{-s}}{s}-\expp{-s}\ 
ds +  \int_0^{qt} ds \int_0^s dt\
\frac{1}{s-t}\left(\frac{1-\expp{-t}}{t}-\frac{1-\expp{-s}}{s}\right) \\
 &:= \tilde{F}(q,t) + G(ql).
\end{align*}
 The two estimates \eqref{Fql1} and \eqref{Fql2} will come from an asymptotic
analysis of 
\[
 \tilde{F}(q,t) =\frac{1}{q} \int_0^{qt} \frac{1-\expp{-s}}{s}-\expp{-s}\ 
ds 
\]
and 
\[
 G(qt) = \int_0^{qt} ds \int_0^s dt\
\frac{1}{s-t}\left(\frac{1-\expp{-t}}{t}-\frac{1-\expp{-s}}{s}\right).
\]
Let us start with $\tilde{F}$. We have
\begin{align*}
 \tilde{F}(q,t)& = \frac{1}{q} \int_0^{qt} \frac{1-\expp{-s}}{s}-\expp{-s}\ 
ds \\
 & = \frac{1}{q} \left(\gamma +\log(qt) + \int_{qt}^\infty \frac{\expp{-t}}{t}\
dt + \expp{-qt} -1\right).
\end{align*}
It is elementary to check that the function $\gamma +\log(x) + \int_{x}^\infty
\frac{\expp{-t}}{t}\ dt + \expp{-x} -1$ is equivalent to $x^2/4$ when $x\to 0$,
and equivalent to $\log(x)=o(\sqrt{x})$ when $x\to \infty$. Since
$\sqrt{x}=o(x^2)$ in the neighbourhood of $+\infty$ and $x^2 = o(\sqrt{x})$ in
the neighbourhood of $0$, by continuity, we can find constants $C_2$ and $C_4$
such that $\tilde{F}(q,t) \le C_2 (qt)^2/q$ and such that $\tilde{F}(qt) \le C_4
(qt)^{1/2}/q$. 

Turning to the function $G$, we can write 
\begin{align*}
 G(x) & = \int_0^x ds \int_0^s dt \frac{1}{s-t}
\left(\frac{1-\expp{-t}}{t}-\frac{1-\expp{-s}}{s}\right) \\
 & = \int_0^1 du \int_0^u dv \frac{1}{u-v}
\left(\frac{1-\expp{-xv}}{v}-\frac{1-\expp{-xu}}{u}\right), \\
\intertext{so that}
 G'(x) & = \int_0^1 du \int_0^u dv \frac{1}{u-v} (\expp{-xv}-\expp{-xu}), \\
\intertext{and that}
 G''(x) & = \int_0^1 du \int_0^u dv \frac{1}{u-v} (u\expp{-xu} - v\expp{-xv}).
\end{align*}
Thus, we have $G(0)=G'(0)=0$ and $G''(0)=1$. Since $G$ is smooth, we get that
$G(x) \sim x^2/2$ when $x\to 0$. 
\par As far as the asymptotic $x\to \infty$ is concerned, we can express
$G'(x)$ in terms of the exponential integral\footnote{Note that this integral
is to be taken in the sense of Cauchy's principal value.} function $Ei(x) =
\int_{-\infty}^x \exp(t)/t\ dt$:
\begin{align*}
 G'(x) & = \int_0^1 du\int_0^u \frac{dv}{u-v} (\expp{-xv}-\expp{-xu}) \\
 & = \int_0^1 du \expp{-xu}\int_0^{xu} \frac{dv}{v}(\expp{v}-1) \\
 & = \int_0^1 du \expp{-xu}(Ei(xu) - \log(xu) -\gamma).
\end{align*}
When $x\to \infty$, we get
 \begin{align*}
  G'(x)& \sim \int_0^1 du \expp{-xu} Ei(xu) = \frac{1}{x} \int_0^x du \expp{-t}
Ei(t) \\
 & \sim \frac{\log x}{x}. 
 \end{align*}
Integrating from $0$ to $x$, we get $G(x)\sim \log^2x=o(\sqrt{x})$ when
$x\to\infty$. Again, $\sqrt(x)=o(x^2)$ in the neighbourhood of $+\infty$ and
$x^2=o(\sqrt{x})$ in the neighbourhood of 0, so that by continuity, there exist
two constants $C_1$ and $C_2$ such that $G(x) \le C_1 x^2$ and such that
$G(x)\le C_2 x^{1/2}$. Thus, we get the two dominations \eqref{Fql1} and
\eqref{Fql2}. \end{proof}

We now turn to a useful estimation of the moments of the distribution $r_a(dx)$
introduced in \eqref{eq:DensLBk}:
\[
 r_a(dx) = (a+x) \expp{-x^2/2-ax}\ind{(0,\infty)}(x)\ dx.
\]
\begin{lemma} \label{Momentsr}
 Let $\lambda >0$. Then, if $(a(n),\ n\ge 1)$ is some sequence in $\R_+$
increasing to $+\infty$, then,
as $n\to \infty$, we have $\int_0^\infty r_{a(n)}(dx) x^\lambda =
O(a(n)^{-\lambda})$.
\end{lemma}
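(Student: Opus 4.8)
The plan is to estimate the integral $\int_0^\infty r_a(dx)\, x^\lambda = \int_0^\infty (a+x) x^\lambda \exp(-x^2/2 - ax)\, dx$ directly, splitting the range of integration according to whether $x$ is small (of order $1/a$) or large compared to $1/a$. For $a$ large, the exponential factor $\exp(-ax)$ forces the mass of $r_a$ to concentrate near $0$ at scale $1/a$, so heuristically $x \approx 1/a$ and the integral should be comparable to $a \cdot a^{-\lambda} \cdot a^{-1} = a^{-\lambda}$ (the extra $a^{-1}$ coming from the effective width of the region), which is the claimed order.

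Concretely, first I would drop the Gaussian factor $\exp(-x^2/2) \le 1$, which only increases the integral, reducing the problem to bounding $\int_0^\infty (a+x) x^\lambda \exp(-ax)\, dx$. This is an elementary Gamma integral: $\int_0^\infty x^\lambda \exp(-ax)\, dx = \Gamma(\lambda+1) a^{-\lambda-1}$ and $\int_0^\infty x^{\lambda+1}\exp(-ax)\, dx = \Gamma(\lambda+2) a^{-\lambda-2}$, so the whole thing equals $\Gamma(\lambda+1) a^{-\lambda} + \Gamma(\lambda+2) a^{-\lambda-1}$. Since $a(n) \to \infty$, the second term is negligible against the first, and we get $\int_0^\infty r_{a(n)}(dx)\, x^\lambda \le \Gamma(\lambda+1) a(n)^{-\lambda} + \Gamma(\lambda+2) a(n)^{-\lambda-1} = O(a(n)^{-\lambda})$, which is exactly the assertion. (This also yields the slightly more precise bound $C_1 a^{-\lambda} + C_2 a^{-\lambda-2}$ quoted in the body of the paper, if one keeps the Gaussian factor and bounds it more carefully, or simply by noting $a^{-\lambda-1} \le \tfrac12(a^{-\lambda}\cdot a^{-2} + a^{-\lambda})$ is not quite it — one instead keeps $\exp(-x^2/2)$ and integrates by parts, but for the $O(a^{-\lambda})$ statement the crude bound suffices.)

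There is essentially no obstacle here: the only mild point to be careful about is not to discard too much — one must keep the factor $(a+x)$ rather than bounding it by $a$ alone, since for $\lambda$ large the $x$-part of that factor is what one might worry about, but the computation above shows it contributes only a lower-order term. The lemma then follows immediately, and the refined two-term bound used in the proof of Proposition~\ref{prop:ConvMartFerm} is obtained by the same computation without discarding the Gaussian, using $(a+x)\exp(-x^2/2-ax) = -\frac{d}{dx}\exp(-x^2/2-ax)$ and integrating by parts to produce the clean constants $C_1, C_2$.
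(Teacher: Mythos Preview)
Your argument is correct and is essentially the same as the paper's: both drop the Gaussian factor $\exp(-x^2/2)\le 1$ and reduce to Gamma integrals (the paper substitutes $u=a x$ first, you compute directly, but these are the same computation). One small slip: when you combine, the second term should be $\Gamma(\lambda+2)\,a^{-\lambda-2}$, not $a^{-\lambda-1}$ (the integral $\int_0^\infty x^{\lambda+1}e^{-ax}\,dx$ is not multiplied by $a$), so your crude bound already yields exactly the two-term estimate $C_1 a^{-\lambda}+C_2 a^{-\lambda-2}$ quoted in the body, and the parenthetical detour through integration by parts is unnecessary.
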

\begin{proof}
 This is fairly easy: if $\lambda >0$, we can write
\begin{align*}
 \int_0^\infty r_{a(n)}(dx) x^{\lambda} & = \int_0^\infty x^\lambda
(a(n)+x)\expp{-x^2/2-a(n)x}\ dx \\
 & = \int_0^\infty \frac{u^{\lambda}}{a(n)^\lambda}\left(a(n) +
\frac{u}{a(n)}\right)
\expp{-u^2/(2a(n)^2)-u}\ \frac{du}{a(n)} \\
 & \le \frac{1}{a(n)^\lambda} \int_0^\infty u^\lambda \expp{-u}\ du +
\frac{1}{a(n)^{\lambda+2}} \int_0^\infty u^{\lambda+1} \expp{-u}\ du,
\end{align*}
which ends the proof.
\end{proof}

\begin{lemma} \label{lem:majoHqr}
 For any $0<q<\infty$ and any $v\ge 0$, we have
\[
 \Es_q^{(v)}\left[\Theta \right] \le
\sqrt{\pi/2} \min(qv,\sqrt{v}).
\]
\end{lemma}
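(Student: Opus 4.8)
The plan is to compute $\Es_q^{(v)}[\Theta]$ essentially in closed form and then apply the elementary bound $1-\expp{-t}\le\min(1,t)$ valid for $t\ge 0$. First I would integrate out the fragmentation measure $\cn$ conditionally on the tree $\ct$: for $\mas$-almost every $s$ (a leaf distinct from $\emptyset$, so that $\ell(\bra{\emptyset}{s})=d(\emptyset,s)>0$), the $q$-started separation time $\theta(s)$ is, conditionally on $\ct$, distributed as the minimum of $q$ and an exponential variable of parameter $d(\emptyset,s)$. This is exactly the situation of the linear record process recalled just before \eqref{EspTheta}, with $d(\emptyset,s)$ playing the role of the position on $\R_+$, so that $\Es_q^{\ct}[\theta(s)]=(1-\expp{-q\,d(\emptyset,s)})/d(\emptyset,s)$. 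Hence, by Tonelli,
\[
 \Es_q^{(v)}[\Theta]=\Es^{(v)}\left[\int_\ct \frac{1-\expp{-q\,d(\emptyset,s)}}{d(\emptyset,s)}\ \mas(ds)\right].
\]

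Next I would invoke the scaling property of the Brownian CRT: under $\Pr^{(v)}$ the tree has the law of a $\Pr^{(1)}$-tree in which all distances are multiplied by $\sqrt v$ and the mass measure by $v$. Consequently, for any nonnegative measurable $g$ one has $\Es^{(v)}[\int_\ct g(d(\emptyset,s))\,\mas(ds)]=v\,\Es[g(\sqrt v\,R)]$, where $R$ denotes the distance from the root to a $\mas$-sampled leaf under $\Pr^{(1)}$, that is, a Rayleigh variable with density $x\expp{-x^2/2}\ind{(0,\infty)}(x)$. Taking $g(x)=(1-\expp{-qx})/x$ gives the identity
\[
 \Es_q^{(v)}[\Theta]=\sqrt v\ \Es\left[\frac{1-\expp{-q\sqrt v\,R}}{R}\right].
\]

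Finally I would estimate the last expectation in two ways. Bounding $1-\expp{-q\sqrt v R}\le 1$ and using $\Es[1/R]=\int_0^\infty\expp{-x^2/2}\,dx=\sqrt{\pi/2}$ gives $\Es_q^{(v)}[\Theta]\le\sqrt{\pi v/2}$, uniformly in $q$; bounding instead $1-\expp{-q\sqrt v R}\le q\sqrt v\,R$ gives $\Es_q^{(v)}[\Theta]\le qv$ (which is in any case immediate, since $\theta\le q$ pointwise and $\mas(\ct)=v$ under $\Pr_q^{(v)}$). Combining the two, $\Es_q^{(v)}[\Theta]\le\min(\sqrt{\pi v/2},\,qv)\le\sqrt{\pi/2}\,\min(\sqrt v,\,qv)$, which is the asserted inequality. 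I do not anticipate a genuine obstacle here: the only point requiring a little care is justifying the use of Tonelli and the scaling identity in the first two steps, and this is settled by the integrability of $1/R$ under the Rayleigh law; as a consistency check, letting $q\to\infty$ recovers the familiar value $\Es_\infty^{(v)}[\Theta]=\sqrt{\pi v/2}$.
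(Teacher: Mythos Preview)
Your proposal is correct and is essentially the same argument as the paper's: both reduce to the identity $\Es_q^{(v)}[\Theta]=\sqrt v\int_0^\infty(1-\expp{-q\sqrt v\,x})\expp{-x^2/2}\,dx$ and then apply $1-\expp{-t}\le\min(1,t)$. The only difference is cosmetic: the paper invokes a formula from \cite{Abraham2011} (namely $\Es_q^{(v)}[\Theta]=\sqrt v\int_0^{q\sqrt v}\Es[\expp{-tY}]\,dt$ with $Y$ Rayleigh, which is the same expression after Fubini), whereas you derive it directly from \eqref{EspTheta} and the scaling of the CRT.
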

\begin{proof}
 We will use formula (21) from \cite{Abraham2011}, stating that, in our context,
if $Y$ is a Rayleigh-distributed variable, then 
\begin{equation*}
 \Es_q^{(v)}\left[\Theta \right] = \sqrt{v}
\int_0^{q\sqrt{v}} \Es\left[\expp{-tY}\right]\ dt.
\end{equation*}
We simply expand the Laplace transform, giving
\begin{align*}
 \Es_q^{(v)}\left[\Theta \right] & = \sqrt{v}
\int_0^{q\sqrt{v}} \int_0^\infty x\expp{-x^2/2} \expp{-tx}\ dx\ dt \\
 & = \sqrt{v} \int_0^\infty \expp{-x^2/2} \left( 1-\expp{-xq\sqrt{v}}\right)\
dx.
\end{align*}
Now, we use the obvious inequality $1-\exp(-x) \le
\min(x,1)$, to get the desired domination, since $qv\int_0^\infty
x\exp(-x^2/2)=qv$ and $\sqrt{v}\int_0^\infty \expp{-x^2/2} dx = \sqrt{\pi
v/2}$.
\end{proof}
Finally, the next lemma is needed to prove the asymptotic smallness of the
martingale $\widehat{M}_n$.

\begin{lemma}\label{Series}
 Let $(a_n,n\ge 1)$ be a nonnegative sequence such that 
\[ 
 \lim_{n\to\infty} \frac{1}{\sqrt{n}} \sum_{k=1}^n a_k <\infty.
\]
Then, we have
\[
 \lim_{n\to\infty} \frac{1}{n} \sum_{k=1}^n a_k^2 = 0.
\]
\end{lemma}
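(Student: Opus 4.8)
The plan is to use that the \emph{convergence} of $n^{-1/2}\sum_{k=1}^n a_k$ --- and not merely its boundedness --- forces each term to satisfy $a_k = o(\sqrt k)$, and that this, together with the a priori bound $\sum_{k=1}^n a_k = O(\sqrt n)$, suffices to control the Cesàro mean of $a_k^2$.

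First I would set $S_n = \sum_{k=1}^n a_k$ and $c_n = S_n/\sqrt n$, so that by hypothesis $c_n \to L$ for some finite $L \ge 0$. Then for $k \ge 2$ one has
\[
\frac{a_k}{\sqrt k} = \frac{S_k - S_{k-1}}{\sqrt k} = c_k - c_{k-1}\sqrt{1 - \tfrac{1}{k}},
\]
and letting $k \to \infty$ the right-hand side tends to $L - L = 0$; hence $a_k = o(\sqrt k)$. Since $(c_n)$ converges it is bounded, so there is a constant $C > 0$ with $S_n \le C\sqrt n$ for every $n \ge 1$. I expect this first step to be the only delicate point: it is here, and only here, that the full strength of the hypothesis is used. (Boundedness of $c_n$ alone would not do: taking $a_{2^j} = 2^{j/2}$ and $a_k = 0$ otherwise gives $S_n = O(\sqrt n)$ but $n^{-1}\sum_{k\le n} a_k^2 \not\to 0$.)

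It then remains a routine splitting of the sum. Fix $\epsilon > 0$ and choose $N$ with $a_k \le \epsilon\sqrt k$ for all $k > N$. For $n > N$, using $a_k \ge 0$ and $\sqrt k \le \sqrt n$,
\[
\sum_{k=1}^n a_k^2 \;\le\; \sum_{k=1}^N a_k^2 \;+\; \epsilon\sum_{k=N+1}^n \sqrt k\, a_k \;\le\; \sum_{k=1}^N a_k^2 \;+\; \epsilon\sqrt n \sum_{k=1}^n a_k \;\le\; \sum_{k=1}^N a_k^2 \;+\; C\,\epsilon\, n .
\]
Dividing by $n$ and letting $n \to \infty$ yields $\limsup_n \tfrac1n\sum_{k=1}^n a_k^2 \le C\epsilon$, and since $\epsilon > 0$ is arbitrary the limit is $0$, as claimed.
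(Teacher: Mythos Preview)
Your proof is correct and follows essentially the same approach as the paper: both first deduce $a_k=o(\sqrt{k})$ from the convergence of $c_n=n^{-1/2}\sum_{k\le n}a_k$, then combine this with the bound $\sum_{k\le n}a_k=O(\sqrt n)$ to kill the Cesàro mean of $a_k^2$. The only cosmetic difference is in the final estimate: the paper factors $\tfrac1n\sum_{k\le n}a_k^2\le\bigl(n^{-1/2}\sup_{k\le n}a_k\bigr)\bigl(n^{-1/2}\sum_{k\le n}a_k\bigr)$ after showing $n^{-1/2}\sup_{k\le n}a_k\to0$, whereas you split the sum at a fixed index $N$ and bound $a_k^2\le\epsilon\sqrt{k}\,a_k$ directly for $k>N$; these are equivalent elementary manipulations.
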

\begin{proof}
Let $s_n= n^{-1/2}\sum_{k=1}^n a_k$. Taking the difference $s_n-s_{n-1}$, we
easily see that $n^{-1/2}a_n$ converges to 0. Then, if $\epsilon >0$, there
exists $n_0\ge 1$ such that for all $n\ge n_0$, $a_n < \epsilon\sqrt{n}$. Thus,
if $n\ge n_0$, we have 
\begin{align*}
 \sup_{k\le n} a_k & \le \sup_{k< n_0} a_k + \sup_{n_0 \le k\le n} a_k \\
 & \le \sup_{k<n_0} a_k+\epsilon,
\end{align*}
which proves that actually 
\[
 \lim_{n\to\infty} \frac{\sup_{k\le n} a_k}{\sqrt{n}} = 0.
\]
Then, we simply write
\[
 \frac{1}{n} \sum_{k=1}^n a_k^2 \le \left(\frac{\sup_{k\le n} a_k}{\sqrt{n}}
\right) \left( \frac{1}{\sqrt{n}} \sum_{k=1}^n a_k \right)
\]
to conclude.
\end{proof}

\begin{Ack}
The author wishes to thank his PhD advisers, Romain Abraham and Jean-François
Delmas, for a very careful reading of the manuscript, as well as for valuable
advice.
\end{Ack}

\end{document}